\numberwithin{equation}{section}
\newtheorem{thm}{Theorem}[section]
\newtheorem{prop}[thm]{Proposition}
\newtheorem{lem}[thm]{Lemma}
\newtheorem{rem}{Remark}[section]
\newtheorem{oss}{Remark}
\newtheorem*{step}{Step}
\def\tr{\text{tr}}
\def\eps{\varepsilon}
\def\d{\delta}
\newcommand{\R}{{\mathbb R}}
\newcommand{\Mi}{{\mathbf M}}
\begin{document}
\title[Viscosity methods for large deviations of multiscale stochastic processes]{Viscosity methods for large deviations estimates of multiscale stochastic processes}
\author{Daria Ghilli   }
 \address{Daria Ghilli, Karl-Franzens University of Graz, Universitatplatz 3, Austria}
\email{daria.ghilli@uni-graz.at}
 \thanks{
 	This work was partially supported by the ERC advanced grant $668998$ (OCLOC) under the EU's H$2020$ research programme. }
 
\begin{abstract}
We study singular perturbation problems for second order HJB equations in an unbounded setting. The main applications are large deviations estimates for the short maturity asymptotics of  stochastic systems affected by a stochastic volatility, where the volatility is modelled by a process evolving at a faster time scale and satisfying some condition implying ergodicity.  
\end{abstract}

\maketitle 
\section{Introduction}

We study the asymptotic behaviour as $\eps \to 0$ of stochastic   systems in the form
\begin{equation}\label{sistema02}
\left\{
\begin{array}{ll}
d X_t =   \eps  \phi(X_t, Y_t) dt + \sqrt{2\eps }\sigma(X_t, Y_t) dW_t \quad &X_{
0
}=x
 \in \R^n,\\
dY_t=\eps^{1-\alpha} b(Y_t) dt + \sqrt{2\eps^{1-\alpha}}\tau(Y_t) dW_t \quad &Y_{
0
}=y
 \in \R^m,
\end{array}
\right.\,
\end{equation}%
where $\eps>0$, $\alpha\geq2$, 
$W_t$ is a standard  $m$-dimensional Brownian motion, 
the matrix $\tau$ is non-degenerate.
This is a model of system where  
the variables $Y_t$ 
  evolve at a much faster time scale $
 s=\frac{t}{\eps^{\alpha}}$ than the other variables $X_t$. The aim is to  study the small time behaviour of the system as 
 $\eps$ goes to $0$, so time has been rescaled in \eqref{sistema02} as $ t \mapsto \eps t$. 
 Motivated by the applications to large deviations that we want to give, we study the behaviour of the following logarithmic functional of the trajectories of  \eqref{sistema02}
$$
v^\eps(t,x,y) := \eps \log E\left[e^{
h(X_{t})/\eps} | 
 (X.,Y.)\, \, \mbox{satisfy \eqref{sistema02}}\right],
$$
where $ h$ is a bounded continuous function and we characterize  $v^\eps$  as the solution of the Cauchy problem with initial data  $v^\eps(0,x,y)=h(x)$ for a fully nonlinear parabolic equation in $n+m$ variables. 

Our first aim is to prove that, under suitable assumptions, the functions $v^\eps(t,x,y)$  converge 
  to a function $v(t,x)$
characterized as the solution of the Cauchy problem for a first order Hamilton-Jacobi equation in $n$ space dimensions
\begin{equation}\label{eqn:limitunb}
v_t-\bar H(x,Dv)=0 \;\text{ in } ]0,T[\times \R^n,\quad v(0,x)=h(x),
\end{equation}
 for a suitable effective Hamiltonian $\bar{H}$.
The existing techniques to treat this kind of problems have been developped so far mainly under  assumptions  implying some kind of compactness of the fast variable.  
We refer mainly to the methods of \cite{ABM}, stemming from Evans' perturbed test function method for homogenization \cite{E1} and its extensions to singular perturbations \cite{ab, BA, BA2}. 
A standard hypothesis is for example the periodicity of the coefficients of the stochastic system with respect $Y_t$, which in particular implies the periodicity in $y$ of the solutions $v^\eps$. 
In \cite{BCG}  the author toghether with M. Bardi and A. Cesaroni studied small time behaviour for the system defined above under this main assumption of periodicity. In \cite{BCG} a rather complete analysis is carried out,  also the case $1<\alpha <2$ is considered (which we do not treat in the present paper, see Remark \ref{whysubno}) and several representation formulas for the effective Hamiltonian are given. 

Aim of this  paper is  to consider unbounded fast processes by replacing the compactness with some condition implying ergodicity, i.e that the  process $Y_t$ has a unique invariant distribution (the long-run distribution) and that in the long term it becomes independent of the initial distribution. A quite natural condition is the following
\begin{equation}\label{standcondintro}
 b(y)\cdot y \leq -B |y|^2,  \quad  \mbox{ if } |y|> R, \quad \mbox{for some } B>0, R>0,
 \end{equation}
which is reminiscent of other similar  conditions about recurrence of diffusion processes
in the whole space (see for example \cite{BCM}, \cite{PV1}, \cite{PV2}, \cite{PV3}).
The interest in the analysis of such kind of systems  is in part related to the financial applications we have in mind; in particular, the assumption of periodicity of \cite{BCG} appears as a technical restriction in order to model volatility in financial
markets, see the empirical data and the discussion presented in \cite{FPS} and the
references therein.

We study two different regimes depending on how fast the volatility oscillates relative to the horizon length, namely the supercritical case $\alpha>2$ and the critical case $\alpha =2$. 
We identify the effective Hamiltonians in both cases  through the resolution of  two different ergodic problems. 
For $\alpha>2$ the ergodic problem is finding,  for any $(\bar{x},\bar{p}) \in \R^n \times \R^n$, a  couple $ \lambda \in \R$ and $w$-viscosity solution of the following  uniformly elliptic linear equation
\begin{equation}\label{eqn:cellacriticounb}
\lambda- \tr(\tau \tau^T(y) D^2w(y)) -b(y) \cdot Dw(y)- |\sigma(\bar{x},y)^T\bar{p}|^2=0.
\end{equation}
Note that  $\lambda=\bar{H}(\bar{x},\bar{p})$ is the effective Hamiltonian and we call $w$ the corrector by analogy with the theory of homogenization. In order to prove the existence of the effective Hamiltonian and of the corrector we  approximate  the ergodic problem by the so-called approximate $\d$-ergodic problem, namely
$$
\d w_{\d}(y)- \tr(\tau \tau^T(y) D^2w_{\d}(y)) -b(y) \cdot Dw_{\d}(y)- |\sigma(\bar{x},y)^T\bar{p}|^2=0.
$$
 The main result which allows us to conclude the existence is a $\d$-uniform local Lipschitz bound for $w_\d$ (see  Lemma \ref{stimagradunbsopracrit}). 
 For the uniqueness of the effective Hamiltonian, we rely on the ergodicity of the process $Y_t$ (encoded by the assumption \eqref{standcondintro}) and on the results of  Bardi, Cesaroni, Manca \cite{BCM}, where the effective Hamiltonian is uniquely determined by the explicit formula
 $$H(\bar x, \bar p)= \int_{\R^m}
|\sigma(\bar x, y)^T \bar p|^2 d\mu(y),
$$
where $\mu$ is the invariant probability measure on $\R^m$ of the
stochastic process
$$dY_t = b(Y_t)dt +\sqrt{2}
\tau(Y_t)dW_t.
$$

 In addition, we  prove the existence  of the  corrector, which is not investigated  in \cite{BCM}.
 
For $\alpha=2$ the ergodic problem is finding,  for any $(\bar{x},\bar{p}) \in \R^n \times \R^n$, a  couple $ \lambda \in \R$ and $w$-viscosity solution of the following uniformly elliptic equation with quadratic nonlinearity in the gradient
\begin{equation}\label{eqn:cellacriticounb}
\lambda- \tr(\tau \tau^T(y) D^2w(y))-|\tau(y)^TDw(y)|^2 -(b(y)+ \tau(y)^T\sigma(\bar{x},y)^T\bar{p}) \cdot Dw(y)- |\sigma(\bar{x},y)^T\bar{p}|^2=0. 
\end{equation} 
For the existence of the effective Hamiltonian and the corrector, we proceed analogously to the supercritical case, in particular we rely on an analogous $\d$-uniform local Lipschitz bound for the solution of the approximate $\d$-ergodic problem (see Lemma \ref{stimagradunbcrit}). 
 We prove the uniqueness of the effective Hamiltonian relying  on the results  by Ichihara \cite{ichiprimo}, where ergodic problems for  Bellman equations (in the case of a nonlinear quadratic term) are solved. For a representation formula  we refer to \cite{KSsec}, where $\bar{H}$ is written  as the convex conjugate of  a suitable operator over a space of measures.
 
The main result is the  convergence of the functions $v^\eps$ to the solution of the limit problem \eqref{eqn:limitunb}. The main difficulties stem from the unboundedness of the fast variable, and the methods used in \cite{BCG} have to be modified since they rely strongly on the periodicity assumption.
Our techniques are based on the perturbed test function method of \cite{E1}, \cite{ABM}, with some relevant adaptations to the unbounded setting. We mainly rely on the ergodicity of the fast process through the use of a \textit{Liapounov function} (see  Section \ref{secassprel}, subsection \ref{liapfun}) into the perturbed test function.
Further difficulties in the proof of the  convergence come from the nonlinearity of the equation satisfied by the $v^\eps$. Our strategy relies essentially on  a global Lipschitz bound for the corrector, which we prove as a consequence of a  global $\d$-uniform Lipschitz bound for the solution of the approximate $\d$-ergodic problems (Proposition \ref{globunifwd}).  
Note that in order to prove the uniqueness of the limit Hamiltonian we  rely  on a local gradient estimate, whereas in the proof of the convergence we need a global bound.

In order to prove Proposition \ref{globunifwd} and then  conclude the convergence,  condition \eqref{standcondintro} is not sufficient and we have to streghten it by considering
\begin{equation}\label{mainass}
b(y)=b-y, \quad \tau(y)=\tau \quad \mbox{ for } |y|\geq R_1
\end{equation}
for some $R_1>0, $ where $b \in \mathbb{R}^m$ is a vector, and $\tau$ is bounded and uniformly non-degenerate. In particular, \eqref{mainass} is satisfied by the Ornstein-Uhlenbeck process.

A significant part of the paper is devoted to the proof of Proposition \ref{globunifwd}. This can be considered one of our main results since it is crucial  to prove the convergence and moreover it is a non standard result, at least to our knowledge, for the type of equations we consider, namely uniformly elliptic equations either with linear Hamiltonians in the gradient (supercritical case), or with superlinear quadratic Hamiltonians (critical case). 
 The proof is in some part inspired by a method due to Ishii
and Lions \cite{IshiiL} (see also \cite{US},\cite{BIMg} and the references therein), which essentially allows to take profit of the uniform ellipticity of the equation to control the
 Hamiltonian terms.  However, we remark that usually the Ishii-Lions method allows to achieve bounds which depend  on the $L^\infty$-norm of the solution (at least if we do not assume any periodicity), whereas our result is a global  estimate in all the space independent of such norm. The fundamental hypothesis which enables us to achieve our result  is the Ornstein-Uhlenbeck nature of the fast process at infinity encoded by  assumption \eqref{mainass}.  
 
We recall some results in the literature related to gradient bounds for similar kinds of equations. 
Gradient bounds for
superlinear-type Hamiltonians can be found in Lions \cite{Lg} and Barles \cite{Bg}, see also Lions and
Souganidis \cite{LSAHP}, Barles and Souganidis \cite{BS} and more, recently, Cardaliaguet and Silvestre \cite{CSg} for nonlinear degenerate parabolic equations.
However, we remark that in the previous works the bounds depend usually on the $L^\infty$-norm of the solution. In \cite{BS} some results independent of the $L^\infty$-norm of the solutions are established but in periodic environments. We recall also the result of \cite{CDLP} by Capuzzo-Dolcetta, Leoni, Porretta for  coercive superlinear Hamiltonians, where a uniform gradient bound is proved, but in some H\"older norm and only in bounded domains. 
Recently, uniform Lipschitz bounds on the torus for analogous equations as ours (and more general) have been established by Ley and Duc Nguyen in \cite{LNg}. 

Following the approach of \cite{FFK} and \cite{BCG}, we derive   a large deviation principle for the process $X^\eps_t$, more precisely we prove that the measures associated to the process $X_t$ in \eqref{sistema02} satisfy such a principle with good rate function
$$
I(x;x_0,t):= \inf\left[\int_0^t \! \bar{L}\left(\xi(s),\dot{\xi}(s)\right)\, ds \ \Big|\ \xi\in AC(0,t), \ \xi(0)=x_0, \xi(t)=x\right],
$$
where $\bar{L}$ is the \emph{effective Lagrangian} associated to $\bar H$ via convex duality. In particular we get that
\begin{equation*}
P(X^\eps_t \in B)=e^
{-\inf_{x \in B} \frac{I(x;x_0,t)}{\eps} + o(\frac 1{\eps})},\; \text{ as } \eps\to 0 
\end{equation*}
for any open set $B\subseteq\R^n$.
We also apply 
this result to find 
 estimates of option prices  near maturity and an asymptotic formula for the implied volatility. Since the proofs are  analogous to those of Theorem $7.1$, Corollary $8.1$ and $8.2$ of \cite{BCG}, we omit them. 
 For a detailed review of the results of this paper and of \cite{BCG}, we refer to \cite{phdth}.

Our first motivation for the study of systems of the form \eqref{sistema02} comes from financial
models with stochastic volatility, where the vector $X_t$ represents
the log-prices of $n$ assets (under a risk-neutral probability measure) and its volatility
$\sigma$ is affected by a process $Y_t$ driven by another Brownian motion (often negatively correlated). 
We adopt the approach of Fouque, Papanicolaou, and Sircar \cite{FPS}, where it is argued that the bursty behaviour
of volatility observed in financial markets can be described by introducing a faster
time scale for a mean-reverting process $Y_t$ (as in \eqref{sistema02}, where the process $Y_t$ evolves on the faster time scale $s=\frac{t}{\eps^\alpha}$)).
For more details on the financial model and for more references on large deviations literature, we refer to the introduction of \cite{BCG}. 

 We finally recall the paper \cite{FFK}, where Feng, Fouque, and Kumar study analogous problems for system of the form we consider, only for $\alpha=2$ and $\alpha=4$, 
 in the one-dimensional case  $n=m=1$, assuming that $Y_t$ is the Ornstein-Uhlenbeck process and the coefficients in the equation for $X_t$ do not depend on $X_t$.  Their methods are based on the approach to large deviations developed in \cite{FK}. 
Comparing to \cite{FFK}, we remark that we consider more general fast processes satisfying \eqref{mainass}, we treat vector-valued processes with $\phi$ and $\sigma$ 
depending on $X_t$ in a rather general way and we study all the range $\alpha\geq 2$. Also, our methods are  different, mostly from the theory of viscosity solutions for fully nonlinear PDEs 
and from the theory of homogenization and singular perturbations for such equations.

\subsection*{Organization of the paper}
In  Section \ref{secassprel} we give the assumptions on the stochastic volatility model  and we recall some preliminaries.  In Sections
$3$ and $4$ we analyse the ergodic problem and the properties of the effective Hamiltonian in
the critical ($\alpha= 2$) and supercritical case ($\alpha > 2$), respectively.
Section $5$ is devoted to the proof of the Lipschitz bounds for the solution of the ergodic problems for each regimes. In Section $6$ we prove the comparison principle for the limit equation \eqref{eqn:limitunb} and finally in Section $7$ we prove the convergence result for each regime of the functions $v^\eps$
to the unique viscosity solution of the limit problem \eqref{eqn:limitunb} with $\bar{H}$ identified in
the previous section.

\section{Assumptions and preliminaries}\label{secassprel}
\subsection{The stochastic volatility model}\label{assnoncomp}
We consider fast mean-reverting processes  of  the following type
\begin{equation}\label{sistema20}
\left\{
\begin{array}{ll}
d X_t = \phi(X_t , Y_t ) dt + \sqrt{2}\sigma(X_t , Y_t )  dW_t, \quad & X_0 =x \in \R^n\\
d Y_t = \eps^{-\alpha}b(Y_t) dt + \sqrt{ 2 \eps^{-\alpha}} \tau(Y_t ) dW_t,  \quad & Y_0 =y \in \R^m,
\end{array}
\right.\,
\end{equation}
where $\eps >0, \alpha \geq 2$,$\phi:\R^n \times \R^m \rightarrow \mathbb{R}^n, \sigma:\mathbb{R}^n \times \mathbb{R}^m \rightarrow \Mi^{n,m}$ are bounded functions, Lipschitz continuous  in $(x,y)$, $b:\R^m\rightarrow \R^m$ is Lipschitz continuous, $ \tau:\mathbb{R}^m\rightarrow \Mi^{m,m}$ is  bounded, Lipschitz continuous and uniformly non degenerate, i.e.  satisfies
for some $\theta>0$
\begin{equation} \label{unifnondeg}
 \xi ^T\tau(y) \tau(y)^T 
  \xi =|\tau^T(y) \xi |^2 >\theta|\xi|^2\quad \mbox{for every}\, \, y\in \mathbb{R} , \xi \in \mathbb{R}^m.
\end{equation}
This assumptions will hold throughtout the paper.

We state now the basic assumptions on $b$ and $\sigma$ which will hold throughtout the paper. Note that the following assumptions are fundamental to the resolution of the ergodic problem and the identification of the limit Hamiltonian, but  are not sufficient in order to prove the convergence result whose proof is given in Section $7$. In the following subsection  we will streghten them approprately, as already announced in the introduction.

We assume the following condition on the fast process which ensures the ergodicity and, in particular, the existence of a Liapounov function. For further remarks, we refer to subsection \ref{liapfun}.
\begin{enumerate}
\item[(E)] \label{assulen}
There exist $B>0$ and $R$ such that
$$
 b(y)\cdot y \leq -B |y|^2,  \quad  \mbox{ if } |y|> R; 
 $$
\end{enumerate}

Moreover, in the supercritical case $\alpha>2$, we assume that for every $x \in \R^n$, the function $y\rightarrow \sigma(x,y)$ is uniformly non degenerate, that is
 \begin{enumerate}
\item[(S1)]
 for each $x \in \R^n$, there exists $\nu>0$ such that
\begin{equation*}
|\sigma(x,y)^T\xi|^2\geq \nu |\xi|^2, \quad  \mbox{ for all }y\in \R^n.
\end{equation*}
\end{enumerate}
\begin{oss}\rm{
The previous assumption (S1) of uniform non-degeneracy of the volatility is due to technical issues arising in the proof of the local gradient bound for the solution of the ergodic problem for $\alpha>2$. We refer in particular to the proof of Lemma \ref{stimagradunbsopracrit}.
}
\end{oss}

In order to study small time behaviour of the system \eqref{sistema20}, we rescale time $t\to \eps t$, for $0<\eps \ll 1$, so that the typical maturity will be of order $\eps$. Denoting the rescaled process by $X^\eps_t, Y^\eps_t$ we get
\begin{equation}\label{sistema22}
\left\{
\begin{array}{ll}
d X_t = \eps\phi(X_t , Y_t ) dt + \sqrt{2\eps}\sigma(X_t , Y_t )  dW_t, \quad & X_0 =x \in \R^n\\
d Y_t = \eps^{1-\alpha}b(Y_t) dt + \sqrt{ 2 \eps^{1-\alpha}} \tau(Y_t ) dW_t,  \quad & Y_0 =y \in \R^m.
\end{array}
\right.\,
\end{equation}

\subsection{Further assumption on the stochastic systems}\label{furthercond}
Now we introduce our main assumption   on the fast process, on which we strongly rely in sections $5$ and $7$.  We assume that $b$ and $\tau$ satisfy   the following condition:

\begin{enumerate}
\item[(U)] \label{assulen}
there exist $b\in \mathbb{R}^m, \tau \in \Mi^{m,m}$ and $R_1$ such that
$$
 b(y)=b-y,  \quad \tau(y)=\tau\quad  \mbox{ if } |y|> R_1.
 $$
\end{enumerate}



\begin{oss}\label{remu}\rm{

Note that assumption (U)  is stronger than condition (E).
 The reason of such a stronger assumption are due to the fact that  the usual conditions implying ergodicity are not sufficient  in order to prove the global Lipschitz bound for the corrector (Proposition \ref{globunifw}), which is  a key result on which we rely strongly in the proof of the convergence.

For example, assumption (U) is satisfied by Ornstein-Uhlenbeck type processes, i.e. processes $Y_t$ as in \eqref{sistema20} such that 
$$
 b(y)=b-y, \quad \tau(y)=\tau  \quad  \mbox{ for any } y \in \mathbb{R}^m,
$$
for some $b\in \R^m$ and $\tau \in \Mi^{m,m}$ non-degenerate. The Ornstein-Uhlenbeck process  is a classical example of a Gaussian process that admits a stationary probability distribution and in particular is a mean-reverting process, namely there is a long-term value towards the process \,\textquotedblleft tends to revert\textquotedblright\,.



}
\end{oss}

Moreover, in the critical case $\alpha=2$ we assume the following further condition on the volatility  $\sigma$:
 \begin{enumerate}
\item[(S2)]
 for all $x\in \R^n$, there exists $g \, : \, \R^m\times\R^m\rightarrow \R^+$ such that
\begin{equation*}\label{sigmaipo}
||\sigma(x,y)-\sigma(x,z)||_\infty\leq g(y,z)|y-z| \quad \mbox{for all } y,z\in \R^m,
\end{equation*}
and   $\forall \eps>0$, there exists $R_\eps>0$ such that $g(y,z)\leq \eps$ as  $|z|,|y|\geq R_\eps$.
\end{enumerate}

From now on, for convenience of notation, we denote compactly by $(S)$ the set of assumptions $(S1)$ and $(S2)$ as follows
 \begin{enumerate}
\item[(S)]
When $\alpha>2$ $\sigma$ satisfies $(S1)$, when $\alpha=2$ $\sigma$ satisfies $(S2)$.
\end{enumerate}

\begin{rem} \rm{
We use (S2)  to prove the Lipschitz bound for the corrector in the critical case (Proposition \ref{globunifwd}). In particular, we need to assume (S2) to treat the correlation term  $\tau(y)\sigma^T(\bar{x}, y)\bar{p}\cdot Dw_\d$, which appears in the ergodic problem for $\alpha=2$. 
On the contrary, for $\alpha>2$ the correlation term do not appear in the ergodic problem (see \eqref{eqn:deltacell} in the following) and then  we do not need assumption (S2). 

Assumption (S2) says, roughly speaking,  that the Lipschitz constant of $\sigma(x,\cdot)$, considered as a function on $\R^m$ for $x\in \R^n$ fixed, vanishes at infinity. 
At least to our point of view,    (S2) seems not  restrictive in the context of financial models,  since it influences the behaviour of  $\sigma$ only at infinity, which in general is not "seen" in the financial applications we are interested in. 
Examples of sufficient conditions for (S2) are
\begin{itemize}
\item[]
$$
 \lim_{|y|\to + \infty}g(y,z)=0 \quad \mbox{uniformly in } z,\\
 $$
 \item []
 $$
  \lim_{|z|\to + \infty}g(y,z)=0 \quad \mbox{uniformly in } y.
 $$
\end{itemize}
For example, the above conditions are satisfied by $\sigma(x,y)=\frac{1}{(1+|y|^2)^{\alpha}}$, for $\alpha>0$. Then in this case we have (S2) with $g(y,z)=\frac{C}{1+|y|+|z|}$.  Without loss of generality we suppose $n=1$ and $z\geq y\geq 0$. Then
$$
\sigma(y)-\sigma(z)=\frac{1}{(1+y^2)^{\alpha}}\left (1-\left(1+\frac{y^2-z^2}{1+z^2}\right)^{\alpha}\right).
$$
From the inequality $1-(1+x)^{\alpha}\leq -x$ for $-1\leq x\leq 0$, we get
$$
\sigma(y)-\sigma(z)\leq \frac{1}{(1+y^2)^{\alpha}}\frac{(z-y)(z+y)}{1+z^2}\leq \frac{2z}{1+z^2}(y-z).
$$
Since we assumed $z\geq y\geq 0$, we can find a constant $C$ independent of $y,z$ such that $\frac{2z}{1+z^2}\leq \frac{C}{1+z+y}$, concluding the proof. 

}
\end{rem}

\subsection{The logarithmic transformation method and the HJB equation}
 We consider the following functional
\begin{equation}\label{v-eps}
v^\eps(t,x,y) := \eps \log E\left[e^{
h(X_{t})/\eps} | 
 (X.,Y.)\, \, \mbox{satisfy \eqref{sistema22}}\right],
\end{equation}
where $ h\in BC(\R^n)$ and $(X_s,Y_s)$ satisfies \eqref{sistema22}. Note that the logarithmic form of this payoff is motivated by the applications to large deviations that we want to give.

A standard result is that  $v^\eps$ can be characterized as  the unique continuous viscosity solution of the following parabolic problem.
We refer  to  Da Lio and Ley in \cite{DLL} for a proof.

\begin{prop}\label{prop:propequazione1} 
 Let  $\alpha \geq2$ and define 
\begin{eqnarray*} 
H^{\eps}(x,y, p, q, X,Y, Z)&:=&   |\sigma^T p|^2+ b\cdot q+ \tr(\tau \tau^T Y) +  \eps\left(\tr(\sigma\sigma^T X) + \phi \cdot p\right) 
\\ &+& 2\eps^{\frac{\alpha}{2}-1} (\tau \sigma^T p) \cdot q   +2\eps^{\frac{1}{2}}\tr(\sigma\tau^T Z) + \eps^{\alpha-2}|\tau^T q|^2  .
\end{eqnarray*}
Then $v^\eps$  
 is the unique bounded continuous viscosity solution of the 
  Cauchy problem
\begin{equation}\label{eqn:equazioneeps11}
\begin{cases} 
 \partial_t v^\eps -H^\eps \left(x,y,D_x v^\eps, \frac{D_y v^\eps}{\eps^{\alpha-1}}, D^2_{xx} v^\eps, \frac{D^2_{yy} v^\eps}{\eps^{\alpha-1}}, \frac{D^2_{xy} v^\eps}{\eps^{\frac{\alpha-1}{2}}}\right)=0 & \, \, \mbox{in} \, \, [0,T] \times \mathbb{R}^n \times \mathbb{R}^m,\\
v^\eps(0,x,y)=h(x) & \mbox{ in }  \mathbb{R}^n\times \mathbb{R}^m. 
\end{cases}
\end{equation}
\end{prop}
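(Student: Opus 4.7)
The plan is to combine the Hopf-Cole (logarithmic) transformation with the Feynman-Kac representation, and then reduce uniqueness to a comparison principle for a fully nonlinear parabolic equation on all of $\R^n\times\R^m$ with quadratic gradient dependence.

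First I would set $u^\eps := e^{v^\eps/\eps}$, so that by Feynman-Kac, $u^\eps(t,x,y) = E[e^{h(X_t)/\eps} \mid (X_0,Y_0)=(x,y)]$ is a bounded continuous viscosity solution of the linear backward Kolmogorov equation $\partial_t u^\eps = \mathcal{L}^\eps u^\eps$ with $u^\eps(0,x,y)=e^{h(x)/\eps}$, where $\mathcal{L}^\eps$ is the infinitesimal generator of the joint process $(X_t,Y_t)$ solving \eqref{sistema22}. Since $h \in BC(\R^n)$, we get $e^{-\|h\|_\infty/\eps} \leq u^\eps \leq e^{\|h\|_\infty/\eps}$. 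Crucially, because the same Brownian motion $W_t$ drives both components, $\mathcal{L}^\eps$ contains a cross-derivative term whose coefficient is $2\eps^{1-\alpha/2}\sigma\tau^T$, reflecting the covariation $d\langle X,Y\rangle_t$.

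Next I would perform the Hopf-Cole transformation explicitly at the level of the PDE. Since $u^\eps$ is strictly positive, $v^\eps=\eps\log u^\eps$ is well defined, bounded by $\|h\|_\infty$, and inherits continuity from $u^\eps$. Using the identities $D_x u^\eps = \eps^{-1} u^\eps D_x v^\eps$ and $D^2_{xx}u^\eps = \eps^{-2}u^\eps\, D_x v^\eps\otimes D_x v^\eps + \eps^{-1}u^\eps D^2_{xx} v^\eps$, together with the analogous expressions for $y$- and mixed derivatives, dividing the equation for $u^\eps$ by $u^\eps/\eps$ produces exactly \eqref{eqn:equazioneeps11}. The quadratic terms $|\sigma^T p|^2$ and $\eps^{\alpha-2}|\tau^T q|^2$ in the definition of $H^\eps$ come from the squared-gradient contributions of the Hopf-Cole transform, while the cross term $2\eps^{\alpha/2-1}(\tau\sigma^T p)\cdot q$ comes from the covariation. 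Since $u\mapsto \eps\log u$ is smooth and strictly increasing on $\{u>0\}$, the change of test functions preserves the viscosity sub- and super-solution properties, so this computation is rigorous in the viscosity sense and gives the existence of a bounded continuous viscosity solution.

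For uniqueness I would appeal to a comparison principle for bounded sub- and supersolutions of \eqref{eqn:equazioneeps11}, which is the expected main obstacle. For each fixed $\eps>0$ the principal part is uniformly elliptic in $y$ and the gradient nonlinearity is quadratic in $p$ and $q$ with bounded matrices $\sigma,\tau$; $b$ has linear growth and $\phi$ is bounded. The standard route is to double variables in $(t,x,y)$, add a penalization $\eta(|x|^2+|y|^2)$ to localize the maximum before sending $\eta\to 0$ using the global bound $\|v^\eps\|_\infty\leq \|h\|_\infty$, and apply the Crandall-Ishii-Lions parabolic matrix inequality; the quadratic term is then absorbed by a linearization argument that exploits the uniform non-degeneracy of $\tau$. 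The delicate point is handling simultaneously the unboundedness of the domain, the quadratic gradient nonlinearity, and the linear growth of $b$, which is exactly the framework treated by Da Lio and Ley in \cite{DLL}; the cleanest presentation is therefore to invoke their comparison theorem, from which Proposition \ref{prop:propequazione1} follows directly.
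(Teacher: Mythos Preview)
Your proposal is correct and aligns with the paper's approach: the paper does not give a proof at all but simply states that this is a standard result and refers to Da Lio and Ley \cite{DLL}. Your sketch makes explicit the two ingredients that are implicit in that reference, namely the Hopf--Cole/Feynman--Kac derivation of the PDE and the comparison principle of \cite{DLL} for the uniqueness, so there is nothing to correct.
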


\begin{oss}\label{whysubno}\rm{
We treat the range $\alpha \geq 2$ and  we do not deal with the  case $\alpha <2$. Indeed, for $\alpha <2$, the ergodic problem is finding (and characterizing it uniquely)  $\lambda \in \mathbb{R}$ and a $w$-viscosity solution of the following equation:
\begin{equation}\label{ergunb}
\lambda -  2(\tau(y) \sigma(\bar{x}, y)^T \bar{p}) \cdot D_y w(y) - |\tau(y)^T D_y w(y)|^2 -|\sigma(\bar{x},y)^T \bar{p}|^2=0, 
\end{equation}
which is not solvable in general. This is essentially due to the fact that  the  ergodicity of the fast process plays no role in  \eqref{ergunb}, since the  cost ($|\sigma^T\bar{p}|^2$) and the drift $2\tau\sigma^T\bar{p}$ are both bounded and the drift $b$ has disappeared. On the contrary, in the case  $\alpha \geq 2$, this role is played by the term $-b\cdot Dw$ where $b$ satisfies assumption (E). Finally we remark that in \cite{BCG}  the case $\alpha <2$ is solved thanks to the periodicity assumption.
}
\end{oss}

\subsection{A Liapounov-like condition}\label{liapfun}
In this section we prove the existence of a Liapounov function for the following operator
$$
\mathcal{G}_{\bar{x},\bar{p}}(y,q,Y)=-(b(y)+2\tau(y)\sigma^T(\bar{x},y)\bar{p}) \cdot q -|\tau^T(y) q|^2 - \tr(\tau \tau^T(y) Y),
$$
  i.e. we prove that for each $(\bar{x},\bar{p})\in \mathbb{R}^n\times\R^n$ there exists a continuous function $\chi_{\bar{x},\bar{p}}:=\chi$, such that $\chi(y) \to + \infty \mbox{ as } |y| \to + \infty$ and if $\mathcal{G}[\chi]:=\mathcal{G}_{\bar{x},\bar{p}}(y,D\chi(y),D^2\chi(y))$ then 
\begin{equation}\label{lyap}
 \mathcal{G}[\chi]\to +\infty \mbox{ as } |y| \to +\infty \mbox{ in the viscosity sense}.
\end{equation}
The existence of a Liapounov function is reminiscent of other similar conditions about ergodicity of diffusion processes in the
whole space; see, for example \cite{H},\cite{LM}, \cite{B}, \cite{BG}, \cite{LB}.

\begin{oss}\label{mathcalgoss}\rm{
We observe that
\begin{equation}\label{mathcalg}
\mathcal{G}_{\bar{x},\bar{p}}(y,q,Y)=-\mathcal{L}_{\bar{x},\bar{p}}(y,q,Y)-|\tau^T(y)q|^2
\end{equation}
where, for any $(\bar{x},\bar{p})\in \R^n \times \R^n,$ $\mathcal{L}_{\bar{x},\bar{p}}$ is the linear operator
$$
\mathcal{L}_{\bar{x},\bar{p}}(y,q,Y)=(b(y)+2\tau(y)\sigma^T(\bar{x},y)\bar{p}) \cdot q  + \tr(\tau \tau^T(y) Y),
$$
which is the infinitesimal generator of the stochastic process
$$
dY_t=(b(Y_t)+2\tau(Y_t)\sigma^T(\bar{x},Y_t)\bar{p})dt+\tau(Y_t)dW_t.
$$
Note that we consider the additional term $-|\tau^Tq|^2$ in \eqref{mathcalg} and this is due  to the logarithmic form of the value function $v_\eps$ defined in \eqref{v-eps}, which is in turn motivated by the applications to large deviations we are interested in.}
\end{oss}
Now we prove  the following lemma.
\begin{lem}
Let (E) hold. Then  for any $(\bar{x},\bar{p})\in \mathbb{R}^n\times\R^n$ there exists a  Liapounov-like function for the operator $\mathcal{G}_{\bar{x},\bar{p}}$.
\end{lem}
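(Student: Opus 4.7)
The plan is to exhibit an explicit smooth Liapounov function, independent of $(\bar{x},\bar{p})$ up to additive constants, of the form $\chi(y) := \sqrt{1+|y|^2}$. Since $\chi$ is $C^\infty$, the viscosity condition \eqref{lyap} will reduce to a pointwise computation, and clearly $\chi(y)\to+\infty$ as $|y|\to+\infty$, so the only real task is to check the growth of $\mathcal{G}_{\bar{x},\bar{p}}[\chi]$.

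First I would compute
$$
D\chi(y)=\frac{y}{\sqrt{1+|y|^2}}, \qquad D^2\chi(y)=\frac{I}{\sqrt{1+|y|^2}}-\frac{y\otimes y}{(1+|y|^2)^{3/2}},
$$
so that $|D\chi(y)|\le 1$ for all $y$, and $\|D^2\chi(y)\|=O(|y|^{-1})$ as $|y|\to+\infty$. Next I would use assumption (E) to extract the dominant positive term: for $|y|>R$,
$$
-b(y)\cdot D\chi(y)=-\frac{b(y)\cdot y}{\sqrt{1+|y|^2}}\ \ge\ \frac{B|y|^2}{\sqrt{1+|y|^2}},
$$
which behaves like $B|y|$ at infinity. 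All the remaining contributions to $\mathcal{G}_{\bar{x},\bar{p}}[\chi]$ are uniformly bounded in $y$: the cross term $-2\tau(y)\sigma^T(\bar{x},y)\bar{p}\cdot D\chi$ is controlled by $2\|\tau\|_\infty\|\sigma\|_\infty|\bar{p}|$ using $|D\chi|\le 1$ and the boundedness of $\tau,\sigma$; the quadratic term $-|\tau^T(y)D\chi(y)|^2$ is bounded below by $-\|\tau\|_\infty^2$; and the diffusion term $-\tr(\tau\tau^T(y)D^2\chi(y))$ is $O(1/\sqrt{1+|y|^2})$ by the estimate on $D^2\chi$ together with boundedness of $\tau$. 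Summing,
$$
\mathcal{G}_{\bar{x},\bar{p}}[\chi](y)\ \ge\ \frac{B|y|^2}{\sqrt{1+|y|^2}}-C\bigl(1+|\bar{p}|\bigr)\ \longrightarrow\ +\infty \quad\text{as }|y|\to+\infty,
$$
which is exactly \eqref{lyap}.

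The main obstacle is the choice of the growth rate of $\chi$, which is dictated by the nonlinear gradient term $-|\tau^T D\chi|^2$ appearing in $\mathcal{G}$ (see Remark \ref{mathcalgoss}). An ansatz such as $\chi(y)=|y|^2$ would make this term grow like $-|y|^2$, overwhelming the drift gain $B|y|^2$ unless $B$ is large; conversely a logarithmic choice $\chi(y)=\log(1+|y|^2)$ would only yield a bounded gain from (E) and not divergence. The square-root ansatz is the natural sweet spot: $|D\chi|$ stays bounded (so the quadratic term contributes only a bounded negative constant) while the drift coercivity furnished by (E) still produces linear growth at infinity, uniformly in $(\bar{x},\bar{p})$ on bounded sets thanks to the standing boundedness of $\sigma$ and $\tau$. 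If desired, the same argument goes through with $\chi(y)=(1+|y|^2)^{\beta/2}$ for any $\beta\in(0,1]$, giving a family of Liapounov functions of adjustable growth.
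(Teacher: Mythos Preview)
Your proof is correct. Both arguments hinge on assumption (E) to make the drift term $-b(y)\cdot D\chi$ dominate, but the two choices of $\chi$ handle the nonlinear term $-|\tau^T D\chi|^2$ differently. The paper takes $\chi(y)=a|y|^2$ and tunes the scalar $a<B/(2T)$ (with $T$ depending on $\|\tau\|_\infty$) so that the drift gain $2aB|y|^2$ beats the quadratic loss $4a^2T|y|^2$; your remark that a quadratic ansatz fails ``unless $B$ is large'' is therefore a bit unfair, since the paper's point is precisely that the small prefactor $a$ rescues it for any $B>0$. Your choice $\chi(y)=\sqrt{1+|y|^2}$ sidesteps this tuning altogether by keeping $|D\chi|\le 1$, so the nonlinear term is uniformly bounded and no parameter is needed; the price is only linear growth of $\mathcal{G}[\chi]$ at infinity, versus quadratic growth in the paper. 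One practical point: the paper reuses the specific form $\chi=a|y|^2$ later in the convergence argument (see \eqref{liapfinale} and Lemma~\ref{GF}), so if you carry your $\chi$ forward you would need to redo those estimates accordingly, though they go through in the same spirit.
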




\begin{proof}
Note that a key role in the following proof is played by the behavior of the drift $b$ at infinity, which  is encoded by  assumption (E). 

We take
\begin{equation}\label{liap}
\chi=a |y|^2,
\end{equation}
and by (E) and the boundedness of $\tau$,  we have for $|y|\geq R$
\begin{equation}\label{T}
-b(y)\cdot D\chi(y)-|\tau^T(y)D\chi(y)|^2\geq  2a B |y|^2 -4 a^2 T|y|^2-2a|b||y|,
\end{equation}
where $T>0$ depends on $||\tau||_\infty$. Then by taking
\begin{equation}\label{a}
a< \frac{B}{2T},
\end{equation} the other terms in $\mathcal{G}$ being negligible because of the boundedness of $\tau$ and $\sigma$, we finally get  \eqref{lyap}. \\
\end{proof}
\begin{oss}\label{ossliaperg}\rm{

We observe that condition (E) reminds classical conditions for ergodicity, see for example \cite{BCM}. In particular we recall  the so-called  \textit{recurrence} condition used by  Pardoux and Veretennikov \cite{PV1}, \cite{PV2}, \cite{PV3} namely
\begin{equation}\label{rec}
b(y)\cdot y\to - \infty \quad \mbox{as } |y|\to + \infty.
\end{equation}
Note  that (E) is stronger than \eqref{rec}. The main reason is that in our context we need to have some additional information on the rate of decay of $b\cdot y$,  in particular we need it to be at least quadratic in order to compete with the  quadratic growth (in the gradient  term) of $\mathcal{G}$ (see also  Remark \ref{mathcalgoss}). 
}
\end{oss}



\section{The critical case: $\alpha=2$}\label{critunb}
\label{sec:casocritico}
\label{3}
\subsection{Key preliminary results}\label{parergichi}
For any $(\bar{x},\bar{p}) \in \R^n \times \R^n$, the ergodic problem is finding a constant $ \lambda \in \R$ such that the following equation
\begin{equation}\label{eqn:cellacriticounb}
\lambda- \tr(\tau \tau^T(y) D^2w(y))-|\tau^T(y) Dw|^2 -(b(y) +2 (\tau(y) \sigma^T(\bar x,y) \bar{p} ) \cdot Dw(y)- |\sigma(\bar x,y)^T\bar{p}|^2=0. 
\end{equation}
has a viscosity solution $w$. This kind of ergodic problems have been studied by Ichihara  \cite{ichiprimo} and Ichihara and Sheu \cite{ichisheu}. We refer in particular to   Theorem $2.4$ of \cite{ichiprimo}, which we recall in the following proposition. 

Denote \begin{equation}\label{Phi}
\Phi=\{w \in C^2(\mathbb{R}^m) \,:\, \mbox{ there exists } C<0 \mbox{ such that } w(y)\leq C(1+|y|)\}.
\end{equation}

\begin{prop}\label{resichi}
Let assumption (E) hold. There exists a constant $\lambda^*\in \R$ such that \eqref{eqn:cellacriticounb} admits a classical solution $w \in C^2(\R^m) $ if and only if $\lambda \leq \lambda^*$. Moreover, if $(\lambda,w)$  is a solution of \eqref{eqn:cellacriticounb} and $w \in \Phi$,
then $\lambda=\lambda^*$.
\end{prop}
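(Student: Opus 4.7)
The plan is to follow the strategy of Ichihara \cite{ichiprimo}: reduce \eqref{eqn:cellacriticounb} to a linear principal eigenvalue problem through a logarithmic transformation, solve that linear problem using approximations on balls and standard elliptic estimates, and then transform back. The assumption (E) and the Liapounov function built in the previous subsection are what make the analysis on the whole space work.

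First, I would observe that the substitution $u = e^{w}$ formally converts \eqref{eqn:cellacriticounb} into the linear Schr\"odinger-type problem
\begin{equation*}
-\tr(\tau\tau^T(y) D^2 u) - (b(y) + 2\tau(y)\sigma^T(\bar x,y)\bar p) \cdot Du - |\sigma^T(\bar x,y)\bar p|^2 u + \lambda u = 0, \quad u > 0,
\end{equation*}
since the cross-product structure gives $\tr(\tau\tau^T D^2 u)/u = \tr(\tau\tau^T D^2 w) + |\tau^T Dw|^2$. The classical solutions of \eqref{eqn:cellacriticounb} are in one-to-one correspondence with the \emph{positive} classical solutions of this linearized equation via $w = \log u$, so the whole problem is recast as one of existence of a positive eigenfunction at level $\lambda$.

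Next I would define the critical value
\begin{equation*}
\lambda^* := \sup\bigl\{\lambda \in \R : \text{the linear problem above admits a positive classical supersolution on } \R^m \bigr\},
\end{equation*}
and check $\lambda^* \in \R$: boundedness of $|\sigma^T\bar p|^2$ and (E) guarantee that $\lambda = -\|\sigma^T\bar p\|_\infty^2$ admits trivial constant supersolutions (so $\lambda^* > -\infty$), while the Liapounov function $\chi = a|y|^2$ with $a$ as in \eqref{a} provides a supersolution of the ``dual'' problem that prevents $\lambda^*$ from being $+\infty$. For each $\lambda \leq \lambda^*$ I would then solve the linear problem on the ball $B_R(0)$ with Dirichlet data, producing a positive classical solution $u_R$ normalized at the origin; Harnack's inequality together with interior Schauder estimates give local $C^{2,\alpha}$ bounds uniform in $R$, so a diagonal extraction yields a positive $u \in C^2(\R^m)$ solving the whole-space problem. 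Setting $w = \log u$ gives a classical solution of \eqref{eqn:cellacriticounb} and establishes the ``if'' direction; conversely, if \eqref{eqn:cellacriticounb} has a classical solution for some $\lambda$, then $u = e^w > 0$ is a positive supersolution of the linear problem, so $\lambda \leq \lambda^*$ by definition.

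For the uniqueness part, assume $(\lambda, w)$ solves \eqref{eqn:cellacriticounb} with $w \in \Phi$ and suppose by contradiction that $\lambda < \lambda^*$. Pick a solution $w^*$ at level $\lambda^*$ and compare the corresponding positive eigenfunctions $u = e^w$ and $u^* = e^{w^*}$. A strong maximum principle argument applied to the quotient $u^*/u$, combined with the growth control $w \leq C(1+|y|)$ and the quadratic Liapounov $\chi = a|y|^2$ with $\mathcal{G}[\chi] \to +\infty$, forces $u^*/u$ to be constant (otherwise it would either vanish or blow up at infinity at a rate incompatible with the recurrence encoded by (E)). That forces $\lambda = \lambda^*$, a contradiction. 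The hardest step is precisely this uniqueness under the growth restriction: one must use that the quadratic Liapounov grows strictly faster than any function in $\Phi$, which provides the geometric room for a Phragm\'en--Lindel\"of-type argument against the linear ambient operator, and it is exactly here that (E) rather than the weaker recurrence \eqref{rec} is used in an essential way (cf.\ Remark \ref{ossliaperg}).
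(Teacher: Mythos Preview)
The paper does not prove this proposition at all: it is quoted as Theorem~2.4 of Ichihara~\cite{ichiprimo}, and the only argument the paper supplies is the remark that the concave equation \eqref{eqn:cellacriticounb} is turned into Ichihara's convex setting by the substitution $w\mapsto -w$ (so that $\lambda\mapsto -\lambda$). There is therefore nothing in the paper to compare your argument against beyond that reduction.

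Your sketch is a reasonable route \emph{for this particular equation}: because the quadratic part is exactly $|\tau^T Dw|^2$, the Cole--Hopf change $u=e^{w}$ genuinely linearizes the problem, and the existence of positive solutions on $\R^m$ via Dirichlet approximations on balls, Harnack, and interior Schauder estimates is standard. Two caveats, though. First, Ichihara's theorem is stated and proved for general convex superlinear Hamiltonians, where Cole--Hopf is unavailable; his actual proof is nonlinear (discounted approximations, gradient bounds, and a recurrence/transience analysis of the optimal feedback diffusion), so your linearization is a shortcut specific to the present structure rather than a rendering of the cited proof. Second, there are sign slips in your outline: plugging a constant $u$ into your linear equation gives $\lambda-|\sigma^T\bar p|^2$, so constants are supersolutions only for $\lambda\ge\sup_y|\sigma^T\bar p|^2$, not at $\lambda=-\|\sigma^T\bar p\|_\infty^2$; once the paper's $w\mapsto -w$ is taken into account, the half-line of solvability in fact flips orientation, and your definition of $\lambda^*$ via supersolutions needs to be adjusted accordingly. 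Finally, the uniqueness paragraph is too vague to stand: in \cite{ichiprimo} the identification of $\lambda^*$ within the growth class $\Phi$ hinges on a recurrence criterion for the controlled diffusion, not merely a strong maximum principle on $u^*/u$; a Phragm\'en--Lindel\"of argument of the kind you indicate would require substantially more detail (in particular a precise barrier built from the Liapounov function and a careful treatment of the behaviour of $u^*/u$ at infinity) before it could be considered complete.
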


\begin{oss}\rm{
 We  remark that  Theorem $2.4$ is proved for  Hamiltonians which are convex in the gradient variable, whereas in our case the Hamiltonian is concave. The two cases are equivalent, since if we have a solution $w$ of \eqref{eqn:cellacriticounb}, then $-w$ is a solution of 
\begin{equation}\label{eqergichi}
-\lambda- \tr(\tau \tau^T(y) D^2w(y))+H(y,Dw(y))=0,
\end{equation}
where
\begin{equation}\label{convex}
H(y,q)=-b(y) \cdot q + |\tau(y)^Tq|^2-2 \tau(y)\sigma(\bar{x},y)^T \bar{p} \cdot q+|\sigma(\bar{x},y)\bar{p}|^2.
\end{equation}
which is now convex in the gradient and satisfies the assumptions of \cite{ichiprimo}. 
}
\end{oss}

\subsection{The ergodic problem and the effective Hamiltonian}\label{parerg}
For $\d >0$, we consider the approximate ergodic problem
\begin{equation}\label{cell}
\delta w_\delta +F(\bar{x},y, \bar{p},Dw_\delta, D^2w_\delta)-|\sigma(\bar{x},y)\bar{p}|^2=0,
\end{equation}
where 
\begin{equation}\label{F}
 F(\bar x,y,\bar p, q,Y):= - \tr(\tau \tau^T(y) Y)-|\tau^T(y) q|^2 -b(y)\cdot q -2 (\tau(y) \sigma^T(\bar x,y) \bar{p} ) \cdot q.
  \end{equation}

Under our standing assumptions we have the following results.

\begin{prop}\label{thm:trucell}
Let assumption (E) hold. For any $(\bar{x},\bar{p})$  fixed,  there exists a unique solution $w_\d \in C^2(\R^m)$  of \eqref{cell} satisfying
\begin{equation}\label{boundd}
-\frac{1}{\d}\inf_{y\in\R^m}|\sigma(\bar{x},y)^T\bar{p}|^2\leq w_\d(y)\leq \frac{1}{\d}\sup_{y\in\R^m}|\sigma(\bar{x},y)^T\bar{p}|^2,
\end{equation}
 such that 
$$
\lim_{\d\to 0} \d w_\d(y)=\mbox{ const }:=\bar{H}(\bar{x},\bar{p})\mbox{ locally uniformly}.
$$
 Moreover $\bar{H}(\bar{x},\bar{p})$ is the unique constant such that \eqref{eqn:cellacriticounb} has a solution $w \in C^2(\R^m)$ satisfying
\begin{equation}\label{loggrowth}
|w(y)|\leq \bar{C}(1+\log(\sqrt{|y|^2+1})) \quad \mbox{ for all } y \in \R^m.
\end{equation}
Finally $w$ is the unique (up to and additive constant)  solution to \eqref{eqn:cellacriticounb} for $\lambda=\bar{H}(\bar{x},\bar{p})$.
\end{prop}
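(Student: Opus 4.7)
My plan is to proceed in three stages: first, solve the approximate $\d$-ergodic problem \eqref{cell} and extract the needed $\d$-uniform estimates on $w_\d$; second, pass to the limit $\d\to 0$ to obtain a solution $(w,\lambda)$ of \eqref{eqn:cellacriticounb} within a class of logarithmically growing functions; third, identify $\lambda$ with $\bar H(\bar x,\bar p)$ and conclude uniqueness via Proposition \ref{resichi}.

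For the first stage, I would build $w_\d$ by Perron's method. Since $\sigma$ is bounded, $M:=\sup_y |\sigma(\bar x,y)^T\bar p|^2<+\infty$, and the constants $\pm M/\d$ are respectively a super- and a subsolution of \eqref{cell}, since all derivative terms vanish on constants. Perron's method combined with Krylov-Safonov and Evans-Krylov interior estimates (applicable because $\tau\tau^T$ is uniformly elliptic and bounded, and the quadratic gradient nonlinearity has the natural sign) upgrades the resulting viscosity solution to a classical $C^2$ solution satisfying \eqref{boundd}. Uniqueness of the bounded $C^2$ solution follows from a standard comparison principle: once two bounded solutions are given, interior Lipschitz estimates bound their gradients locally, so on the common range the quadratic term is Lipschitz in $q$ and the usual linearization argument applies.

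For the second stage, I would invoke the $\d$-uniform \emph{local} Lipschitz bound of Lemma \ref{stimagradunbcrit} together with the $\d$-uniform $L^\infty$ bound on $\d w_\d$ from \eqref{boundd} to deduce that the normalized family $\tilde w_\d(y):=w_\d(y)-w_\d(0)$ is locally equibounded and equicontinuous, while $\d w_\d(0)$ is bounded. Arzel\`a-Ascoli then supplies a subsequence along which $\tilde w_\d\to w$ locally uniformly and $\d w_\d(0)\to\lambda\in\R$. Writing \eqref{cell} for $\tilde w_\d$ amounts to replacing $\d w_\d(y)$ by $\d\tilde w_\d(y)+\d w_\d(0)$, and the first summand tends to zero locally uniformly; hence viscosity stability yields that $(w,\lambda)$ solves \eqref{eqn:cellacriticounb}, and interior regularity gives $w\in C^2(\R^m)$.

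The third and hardest stage is to establish the $\d$-uniform logarithmic bound $|\tilde w_\d(y)|\leq \bar C\bigl(1+\log\sqrt{1+|y|^2}\bigr)$, which survives in the limit and gives \eqref{loggrowth}. I would prove this by an explicit barrier argument: thanks to assumption (U), for $|y|>R_1$ the operator $F$ in \eqref{F} reduces (up to the bounded source $|\sigma^T\bar p|^2$ and the quadratic term $|\tau^T q|^2$) to a constant-coefficient Ornstein-Uhlenbeck generator, and a direct computation shows that $\psi(y):=A\log(1+|y|^2)+B$ is a supersolution of \eqref{cell}, and $-\psi$ a subsolution, outside a sufficiently large ball, uniformly in $\d\in(0,1]$, provided $A$ is chosen large enough in terms of $M$, $\|\tau\|_\infty$ and the constant $B$ in (E). Since Lemma \ref{stimagradunbcrit} controls $\tilde w_\d$ on this ball, the comparison principle of the first stage delivers the desired global bound. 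Once $w$ satisfies \eqref{loggrowth} it lies in the class $\Phi$ of \eqref{Phi}, and Proposition \ref{resichi} applied to $-w$ (which solves the convex problem \eqref{eqergichi}) forces $\lambda=\lambda^*$. Hence every subsequential limit produces the same constant, so the whole family $\d w_\d$ converges locally uniformly to $\bar H(\bar x,\bar p):=\lambda^*$. Uniqueness of the corrector $w$ up to additive constants, within solutions satisfying \eqref{loggrowth}, follows from the corresponding uniqueness statements of \cite{ichiprimo, ichisheu}, obtained by applying the strong maximum principle to the difference of two such correctors after reduction to an ergodic linearized operator. The main obstacle is the construction of the $\d$-uniform logarithmic barrier: this is the precise step where the structural hypothesis (U), i.e.\ the Ornstein-Uhlenbeck behavior at infinity, enters in an essential way and distinguishes the unbounded setting from the periodic framework of \cite{BCG}.
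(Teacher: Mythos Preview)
Your overall architecture matches the paper's proof closely: Perron's method with constant barriers for the $\delta$-problem, the local Lipschitz estimate of Lemma~\ref{stimagradunbcrit} plus Arzel\`a--Ascoli (the paper runs this as an explicit diagonal extraction over balls $\bar B_k$, but the content is the same), stability to get a solution $(w,\lambda)$ of \eqref{eqn:cellacriticounb}, and then Proposition~\ref{resichi} to pin down $\lambda$ once $w\in\Phi$.

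The one substantive discrepancy is your third stage. You assert that the $\delta$-uniform logarithmic barrier requires assumption~(U), and you frame this as ``the precise step where the structural hypothesis~(U) \dots\ enters in an essential way.'' This is not correct, and since Proposition~\ref{thm:trucell} is stated under~(E) alone, your argument as written does not prove the proposition. The paper constructs the barrier $g(y)=C\log\sqrt{|y|^2+1}$ using only~(E): for $|y|\ge R$ one has $-b(y)\cdot Dg(y)\ge CB\,|y|^2/(|y|^2+1)$, which stays bounded below by a positive constant, while the dangerous quadratic term satisfies $|\tau^T Dg|^2\le K C^2|y|^2/(|y|^2+1)^2\to 0$ and the remaining first- and second-order terms are $O(|y|^{-1})$. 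Hence for $C$ large (depending on $B$, $\|\sigma\|_\infty$, $|\bar p|$, $\|\tau\|_\infty$) the supersolution inequality holds outside a fixed ball, uniformly in $\delta$; comparison plus the local Lipschitz bound on that ball then gives \eqref{loggrowth} for $\tilde w_\delta$. The point is that logarithmic growth makes $|Dg|$ \emph{decay}, so the quadratic nonlinearity is harmless at infinity and the drift condition~(E) alone wins. Assumption~(U) is reserved in the paper for the \emph{global} Lipschitz bound of Proposition~\ref{globunifwd}, which is a different and harder estimate used later in the convergence proof; you have conflated the two.
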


\begin{oss}\rm{
The  growth estimate \eqref{loggrowth} implies that  $w$ solution of \eqref{eqn:cellacriticounb}  belongs to the class $\Phi$ defined in \eqref{Phi}, allowing us to apply Proposition \ref{resichi}  and deriving the uniqueness of $\bar{H}$.   Note that \eqref{loggrowth} is stronger than the growth required in $\Phi$, in particular it would be enough to prove \eqref{loggrowth} with a linear function of $y$ in the right-hand side.}
\end{oss}
First, we prove the following local gradient bound for the solution of the $\d$-ergodic problem.
\begin{lem}\label{stimagradunbcrit}
 Let $\delta>0$ and $w_\d \in C^2(\R^m)$ be the unique bounded solution of  \eqref{cell}. 
 Then for all $k\in \mathbb{N}$ and $\bar{x}, \bar{p} \in \mathbb{R}^n$, there exists $C>0$ such that  it holds
\begin{equation}\label{bernunb}
\max_{y\in \bar{B}_k}|D_y w_\delta(y;\bar{x},\bar{p})| \leq C,
\end{equation}
where $B_k$ is the ball with radius $k$ and center $0$ and $C$ depends on $k$ and $\bar{p}$.
\end{lem}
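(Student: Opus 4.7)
I would adapt the Ishii--Lions doubling of variables technique to prove the local Lipschitz estimate. Fix $k\in\mathbb{N}$, $y_0\in \bar B_k$, and choose a smooth strictly concave function $\phi:[0,\infty)\to[0,\infty)$ with $\phi(0)=0$, $\phi'(0)=1$ and $|\phi''(r)|$ large for small $r$, e.g.\ $\phi(r)=r-c_0 r^{1+\gamma}$ for small $c_0>0$ and $\gamma\in(0,1)$. On $\bar B_R\times\bar B_R$, with $R=k+1$, consider the auxiliary function
$$
\Phi(y,z)=w_\d(y)-w_\d(z)-L\phi(|y-z|)-\Lambda\bigl(|y-y_0|^2+|z-y_0|^2\bigr),
$$
with parameters $L,\Lambda>0$ to be chosen. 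The goal is to prove that there exists $L^*=L^*(k,\bar x,\bar p)$, independent of $\d$, such that $\max_{\bar B_R\times\bar B_R}\Phi\le 0$ whenever $L\ge L^*$. Specializing to $z=y_0$ and letting $y\to y_0$ then yields $|Dw_\d(y_0)|\le L^*$, which is \eqref{bernunb}.

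\textbf{Main computation.} Arguing by contradiction, suppose $\max\Phi>0$ for $L$ arbitrarily large. Choose $\Lambda$ large enough (in terms of a preliminary $\d$-uniform local oscillation bound on $w_\d$) so that the maximum $(\bar y,\bar z)$ is attained in the interior. Since $\Phi(\bar y,\bar z)>0$ and $\phi(r)\sim r$ near $0$, one obtains $r:=|\bar y-\bar z|\le C_0/L$. Since $w_\d\in C^2$, the first- and second-order maximum conditions give
$$
Dw_\d(\bar y)=L\phi'(r)\hat e+2\Lambda(\bar y-y_0),\qquad Dw_\d(\bar z)=L\phi'(r)\hat e-2\Lambda(\bar z-y_0),
$$
with $\hat e=(\bar y-\bar z)/r$, together with the standard matrix inequality relating $D^2w_\d(\bar y)$ and $D^2w_\d(\bar z)$ to the Hessian of $L\phi(|y-z|)+\Lambda(|y-y_0|^2+|z-y_0|^2)$; in particular, the $\hat e$-direction eigenvalue $\phi''(r)<0$ delivers a strongly negative rank-one contribution. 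Subtracting \eqref{cell} at $\bar y$ and at $\bar z$: the $\d$-term $\d(w_\d(\bar y)-w_\d(\bar z))\ge 0$; the drift, correlation and cost differences are $O(rL)+O(\Lambda)=O(1)+O(\Lambda)$ by the Lipschitz continuity of $b,\tau,\sigma$ on $\bar B_R$ and $r\le C_0/L$; and the quadratic gradient difference $|\tau^T(\bar y)Dw_\d(\bar y)|^2-|\tau^T(\bar z)Dw_\d(\bar z)|^2$ splits as $O(rL^2)+O(L\Lambda)=O(L)+O(L\Lambda)$ (from the $\tau$-Lipschitz mismatch and the gradient mismatch, respectively). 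The matrix inequality combined with the uniform ellipticity $\tau\tau^T\ge\theta I$ from \eqref{unifnondeg} yields an Ishii--Lions ``kick'' of size $-cL|\phi''(r)|\theta$ in the second-order trace difference; for $r\sim 1/L$ this is of order $-cL^{2-\gamma}\theta$. Choosing $\gamma$ small and $\Lambda$ subpolynomial in $L$, this negative contribution strictly dominates the positive gradient-side contributions for $L$ large, giving the contradiction.

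\textbf{Main obstacle.} The principal difficulty is the quadratic gradient nonlinearity $|\tau^T Dw_\d|^2$ in \eqref{cell}, which produces terms of order up to $L^2$ in the doubling argument; absorbing these requires a careful balance of $L,\Lambda,\gamma$ and the precise scaling $r\sim 1/L$ forced by the positive-maximum condition, as well as checking that the tangential eigenvalue $\phi'(r)/r$ of $A$ (which enters as positive noise) does not swamp the $|\phi''|$-kick. A secondary difficulty is $\d$-uniformity: the a priori bound \eqref{boundd} on $\|w_\d\|_\infty$ degenerates as $\d\to 0$, so a $\d$-independent local oscillation estimate for $w_\d$ on $\bar B_R$ must be produced before the doubling argument is set up; this can be obtained by interior Hölder/Harnack-type estimates exploiting the uniform ellipticity \eqref{unifnondeg} and the local boundedness of the coefficients, so that all parameters in the proof depend only on $k,\bar x,\bar p$.
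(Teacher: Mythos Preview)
Your approach via Ishii--Lions doubling is different from the paper's, which invokes the (weak) Bernstein method: one differentiates the equation, looks at $|Dw_\delta|^2$, and uses the coercivity coming from the quadratic term $|\tau^TDw_\delta|^2$ to bound the gradient directly, with a cut-off to localize. The crucial feature of Bernstein here is that it produces the local Lipschitz bound \emph{without} any preliminary oscillation or $L^\infty$ control on $w_\delta$; the $\delta$-uniformity is automatic.

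Your argument, by contrast, has a genuine gap at exactly the point you flag as a ``secondary difficulty''. To force the maximum of $\Phi$ into the interior you need $\Lambda$ large compared to the oscillation of $w_\delta$ on $\bar B_R$, and this oscillation must be controlled \emph{uniformly in $\delta$} for the final constant $L^*$ to be $\delta$-independent. You assert this follows from ``interior H\"older/Harnack-type estimates exploiting the uniform ellipticity'', but that claim is not justified for \eqref{cell}: Krylov--Safonov type estimates apply to equations with at most linear gradient dependence, and here the nonlinearity $|\tau^TDw_\delta|^2$ is quadratic. Treating it as a known lower-order coefficient makes the Harnack constant depend on $\sup|Dw_\delta|$, which is circular; linearizing by a Hopf--Cole transform produces a function whose bounds depend on $\|w_\delta\|_\infty\sim 1/\delta$. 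In short, a $\delta$-uniform local oscillation bound is not a standard consequence of ellipticity for this equation, and without it the rest of your doubling computation (which is otherwise carried out correctly) does not give a $\delta$-independent $L^*$.

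The cleanest repair is to switch to Bernstein, where the quadratic term works \emph{for} you rather than against you: it is precisely the coercivity $|\tau^T q|^2\ge\theta|q|^2$ that absorbs all the bad contributions in the equation satisfied by $|Dw_\delta|^2$, and no oscillation input is needed.
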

\begin{proof}
We refer to \cite{BCG} where we proved the result by the Bernstein method under the assumption of periodicity; the extension to a local bound follows by cut-off functions arguments, following the derivation of similar estimates in \cite{EI}. We refer  also to \cite{KSsec}, Lemma $2.4$ for an analogous result. We only note that a key role  is played by the coercivity  in the gradient of the ergodic problem, more precisely by the quadratic  term in the gradient $|\tau^TDw_\d|^2.$
\end{proof}

Now we prove Proposition \ref{thm:trucell}.

\begin{proof}[Proof of Proposition \ref{thm:trucell}]\rm{
We split the proof into two steps.  In step $1$ we prove the existence of a couple $(w,\lambda) \in C(\R^m) \times \R$ solution to \eqref{eqn:cellacriticounb}; in step $2$  we prove that $w\in C^2(\R^m)$, \eqref{loggrowth} and the uniqueness of such $\lambda$. Note that the uniqueness up to an additive constant of $w$ follows from Theorem $2.2$ of \cite{ichiprimo}.  
\begin{step}1-\textit{Existence}
\upshape
We use the methods of \cite{AL}
 based on the small discount approximation \eqref{cell}.
Note that the PDE \eqref{cell} has bounded forcing term $|\sigma^T(\bar{x},y)\bar{p}|^2$ since $\sigma$ is bounded. The existence and uniqueness of a viscosity solution  with the $\d$ dependent bound 
\eqref{boundd}
follows from the Perron-Ishii method and the comparison principle in \cite{DLL}.
Moreover $w_\d\in C^2(\R^m)$, thanks to the Lipschitz uniform estimate of Lemma \ref{stimagradunbcrit} and by elliptic regularity theory of convex uniformly elliptic equations, see \cite{Trudi} and \cite{KSest}.

Now we prove that $\d w_\d(y)$ converges along a subsequence of $\d\to 0$ to the constant $\bar{H}(\bar{x}, \bar{p})$ and $w_\d(y)- w_\d(0)$  converges to the corrector $w$. The hard part is proving equicontinuity estimates for $\d w_\d$. We proceed by a diagonal argument. By the local Lipschitz estimates of Lemma \ref{stimagradunbcrit}, we have
\begin{equation}\label{equilip1}
|w_\delta(y)- w_\d(z)|\leq  C_1|y-z| \quad y, z \in \bar{B}_1,
\end{equation}
where for  convenience we denote  by $C_k$ the constant of Lemma \ref{stimagradunbcrit} in $B_k$ for $k \in \mathbb{N}$. Then $\d w_\d$ is equicontinuous in $\bar{B}_1$. The equiboundedness follows from the comparison principle with constant sub and super solutions, namely $\min_{y\in \R^m} |\sigma(y,\bar{x})^T\bar{p}|^2$ and $\max_{y\in \R^m} |\sigma(y,\bar{x})^T\bar{p}|^2$. Then by Ascoli-Arzela theorem, there exists a subsequence  $\d_n^1w_{\d_n^1}$ of $\d w_\d$, converging  uniformly in $\bar{B}_1$ to a constant $\lambda^1$, since by \eqref{equilip1} we have 
$$
|\d w_\d(y)-\d w_\d(z)|\leq \d C_1|y-z| \quad y,z \in \bar{B}_1
$$
and then
$$
\d w_\d(y)-\d w_\d(z)\to 0 \quad  \forall y,z \in \bar{B}_1 \mbox{ as } \d \to 0.
$$
By the same argument, $\delta_n^1 w_{\d_n^1}$ is equibounded and equicontinous in $\bar{B}_2$. Then, there exists a subsequence  $\d_n^2w_{\d_n^2}$ of $\d^1_nw_{\d^1_n}$, converging uniformly in $\bar{B}_2$ to a constant $\lambda^2$, such that
$$
\lambda^1=\lambda^2=:\lambda.
$$
Similarly, we construct for all $k\in \mathbb{N}$, a sequence  $\{\d_n^kw_{\d_n^k}\}_n$ converging as $n \rightarrow \infty$  uniformly in $\bar{B}_k$ to a constant $\lambda^k=\lambda$. 
Note that the subsequence $\{\d_n^nw_{\d_n^n}\}_n$ converges locally uniformly to $\lambda$. In fact  for any $k\in \mathbb{N}$ we have that
$
\{\d_n^nw_{\d_n^n}\}_n$ is a subsequence of  $\{\d_n^kw_{\d_n^k}\}_n$ for all $n\geq k,
$
from which we deduce that $\{\d_n^nw_{\d_n^n}\}_n$ converges uniformly in $\bar{B}_k$ for all $k\in \mathbb{N}$. 

Now define $v_\d:=w_\delta(y)-w_\d(0)$. Notice that, for all $k$, $v_\d$ is equibounded in $\bar{B}_k$, since, by Lemma \ref{stimagradunbcrit}, we have
$$
|v_\d(y)|=|w_\delta(y)-w_\d(0)|\leq C_k|y|, \quad y \in \bar{B}_k
$$
and, again by Lemma \ref{stimagradunbcrit}, $v_\d$ is equicontinuous  in $B_k$ since 
$$
|v_\d(y) -v_\d(z)| =|w_\d(y)-w_\d(z)| \leq C_k|y-z|, \quad y,z \in \bar{B}_k.
$$
By an analogous diagonal argument, we find sequences $\{v_{\d_n^k}\}_n$ such that
$
v_{\d_n^{k+1}}$ is a subsequence of $ v_{\d_n^k}$,
and converges  uniformly in $\bar{B}_k$ to a function $v^k$. Moreover for all $k \in \mathbb{N}$, we have 
$$
v^{k+1}(y)=v^k(y) \quad y \in \bar{B}_k.
$$
Then, if we define $w \, : \, \mathbb{R}^m\to \mathbb{R}$ such that 
\begin{equation}\label{v}
w(y)=v^k(y)  \quad y \in \bar{B}_k, 
\end{equation}  
we conclude that
\begin{equation}\label{convdsucc}
\{v_{\d^n_n}\}_n \rightarrow w \quad \mbox{locally uniformly}.
\end{equation}
Now we prove that $(\lambda, w)$ satisfy \eqref{eqn:cellacriticounb}. From \eqref{cell} we get
\begin{equation}\label{discount1}
\d v_\d +\d w_\d(0)+F(\bar x,y,\bar p, D_yv_\d,D^2_{yy}v_\d)-|\sigma^T(\bar x,y)\bar{p}|^2=0 ,\quad\text{in }\R^m .
\end{equation}
Since $v_\d$ is locally equibounded, $\delta v_\d \rightarrow 0$ locally uniformly and  the claim follows recalling that  $\d w_\d\rightarrow \lambda$ and using the stability property of viscosity solutions. 
 
Finally the corrector inherits the property \eqref{bernunb} of Lemma \ref{stimagradunbcrit}, that is, for all $k \in \mathbb{N}$ and $(bar x, \bar p) \in \mathbb{R}^n \times \mathbb{R}^n$, there exists $C>0$ such that
\begin{equation}
\label{bernw}
\max_{y\in \bar{B}_k}|D_yw(y;\bar{x},\bar{p})|\leq C,
\end{equation}
where $C$ depends on $k$ and $\bar{p}$.
\end{step}
\begin{step}2-\textit{Uniqueness of $\lambda$}
\upshape
The uniqueness is given by Proposition \ref{resichi}, once proved that $w \in \Phi$. The $C^2$ regularity follows from the uniform Lipschitz estimate \eqref{bernw} and the regularity theory of convex uniformly elliptic equations, see  \cite{Trudi} and \cite{KSest}.

Note that, in order to prove that $w \in \Phi$, we  prove the (stronger) growth condition \eqref{loggrowth}.
 We prove the claim for the upper bound, since the proof of the lower bound is analogous. 
 
We take the approximate problem \eqref{cell} and we prove that the function $g=C\log(\sqrt{|y|^2+1})$, for some positive constant $C$ large enough, is a supersolution of \eqref{cell}, that is, we prove
\begin{equation}\label{cellg}
\d g(y)-(b(y)+2\tau(y) \sigma^T(\bar{x},y) \bar{p})\cdot Dg-|\tau^T(y)Dg(y)|^2-\tr(\tau\tau^T(y)D^2g)-|\sigma(\bar{x},y)\bar{p}|^2\geq 0.
\end{equation}
Take $|y|\geq R$ where $R$ is defined in (E). By (E) and the boundedness of  $\sigma$, we have
\begin{multline}\label{est}
\d g(y)-(b(y)+2\tau(y) \sigma^T(\bar{x},y) \bar{p})\cdot Dg-|\tau^T(y)Dg(y)|^2-\tr(\tau\tau^T(y)D^2g)-|\sigma(\bar{x},y)\bar{p}|^2\geq \\2CB\frac{|y|^2}{|y|^2+1}-\frac{KC(1+|\bar{p}|)|y|}{|y|^2+1} -KC^2\frac{|y|^2}{(|y|^2+1)^2}-|\sigma(\bar{x},y)\bar{p}|^2,
\end{multline}
where $K$ depends on $B>0$ defined in (E) and on $||\sigma(\bar{x},\cdot)||_\infty$. Then, in order to prove that $g$ is a supersolution of  \eqref{cellg}, we prove that the second term in \eqref{est} is non negative. We factorise $\frac{|y|^2}{|y|^2+1}$ and we prove that
\begin{equation}\label{multline}
2CB-\frac{KC (1+|\bar{p}|)}{|y|}-\frac{KC^2}{|y|^2+1} -\sup_y|\sigma^T\bar{p}|^2\frac{|y|^2+1}{|y|^2}\geq 0.
\end{equation}
Note that when $y$ goes to infinity in \eqref{multline} the leading order term  is $2CB-\sup_y|\sigma^T\bar{p}|^2$. Then the claim follows by taking $C$ such that
$2CB=2+\frac{3}{2}\sup_y|\sigma^T\bar{p}|^2$ and $y \in \mathbb{R}^m \setminus \bar{B}_{\bar{R}}$ for some $\bar{R}>R$ such that
$$
\frac{KC(1+|\bar{p}|)}{|y|}+\frac{KC^2}{|y|^2+1}\leq 2, \quad \frac{|y|^2+1}{|y|^2}\leq \frac{3}{2}.
$$
 Up to now we proved that the function $C\log(\sqrt{|y|^2+1})$ is a supersolution of \eqref{cell}  in $\mathbb{R}^m \setminus B_{\bar{R}}.$ If $\max_{\bar{B}_{\bar{R}}} w_\d \leq 0$ 
then
$$
w_\d(y) \leq \max_{\bar{B}_{\bar{R}}} w_\delta \leq C\log(\sqrt{|y|^2+1})\quad y \in \partial B_{\bar{R}},
$$
and then by the comparison principle we have
$$
w_\d(y) \leq C\log(\sqrt{|y|^2+1})  \quad y \in \mathbb{R}^m.
$$
Now suppose that $\max_{\bar{B}_{\bar{R}}} w_\d \geq 0$ and notice that in this case $C\log(\sqrt{|y|^2+1})+\max_{\bar{B}_{\bar{R}}} w_\d$ is still a supersolution of \eqref{cell} in $\mathbb{R}^m \setminus B_{\bar{R}}.$ Then, again  by the comparison principle, we get
\begin{equation}\label{stimavd}
w_\d(y) \leq C\log(\sqrt{|y|^2+1}) + \max_{\bar{B}_{\bar{R}}} w_\delta \quad y \in \mathbb{R}^m.
\end{equation}  
Since $w_\d$ satisfies \eqref{stimavd}
$$
v_\d(y) =w_\d(y)-w_\d(0)\leq C\log(\sqrt{|y|^2+1})+\max_{\bar{B}_{\bar{R}}} w_\d(y)  -w_\d(0) \quad y \in \mathbb{R}^m.
$$
We estimate the term $\max_{\bar{B}_{\bar{R}}} w_\d(y)  -w_\d(0)$ by Lemma \ref{stimagradunbcrit} and we get
$$
v_\d(y)\leq C\log(\sqrt{|y|^2+1})+C_{\bar{R}}
$$ 
and thanks to \eqref{convdsucc} we conclude \eqref{loggrowth} by taking $\bar{C}=\max\{C, C_{\bar{R}}\}$.

\end{step}}
\end{proof}

We recall some properties satisfied by $\bar{H}$. For a proof we refer to \cite{BCG}, Proposition $3.3$.

\begin{prop}\label{prop:contbarh12unb}
Let assumption (E) hold. \begin{enumerate}\itemsep2pt
\item[(a)] $\bar{H}$  is continuous on $\mathbb{R}^n \times \mathbb{R}^n$;

\item[(b)] the function $p \rightarrow \bar{H}(
{x}, 
{p})$ is convex;

\item[(c)]  
\begin{equation}\label{eqn:bounds}
\inf_{y\in\R^m} |\sigma^T(\bar x,y)\bar p|^2\leq \bar H(\bar x,\bar p)\\ \leq \sup_{y\in\R^m} |\sigma^T(\bar x,y)\bar p|^2 ;
\end{equation}

\item[(d)]
For all $0<\mu<1$ and $x,z,q,p \in \mathbb{R}^n$, it holds
\begin{equation}\label{semi-homo}
\mu\bar{H}\left(x,\frac{p}{\mu}\right)-\bar{H}(z,q) \geq \frac{1}{\mu-1}\sup_{y\in \mathbb{R}^m}|\sigma^T(x,y)p-\sigma^T(z,y)q|^2 .
\end{equation}


\end{enumerate}
\end{prop}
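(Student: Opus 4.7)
My plan is to follow the strategy of Proposition $3.3$ of \cite{BCG}, adapted to the unbounded setting, proving (c), (a), (b), (d) in this order. For (c), multiply the $\d$-bound \eqref{boundd} by $\d$ and pass to the limit $\d\to 0$, using the locally uniform convergence $\d w_\d(y)\to \bar H(\bar x,\bar p)$ from Proposition \ref{thm:trucell}. For (a), fix $(\bar x_n,\bar p_n)\to (\bar x,\bar p)$ and let $w_n$ be the corrector at $(\bar x_n,\bar p_n)$, normalized by $w_n(0)=0$. Lemma \ref{stimagradunbcrit} makes $\{Dw_n\}$ uniformly bounded on each ball $\bar B_k$ (the bound depends continuously on $\bar p_n$), so Arzela-Ascoli yields $w_n\to w_\infty$ locally uniformly along a subsequence, while (c) ensures $\bar H(\bar x_n,\bar p_n)\to \lambda_\infty$ (subsequentially). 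Stability of viscosity solutions identifies $(w_\infty,\lambda_\infty)$ as a solution of \eqref{eqn:cellacriticounb} at $(\bar x,\bar p)$, with $w_\infty$ inheriting the logarithmic growth \eqref{loggrowth} from the uniform estimates; the uniqueness part of Proposition \ref{thm:trucell} (via Proposition \ref{resichi}) then forces $\lambda_\infty=\bar H(\bar x,\bar p)$, giving continuity.

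For (b), fix $x\in\R^n$, $p_1,p_2\in\R^n$, $\mu\in(0,1)$, and let $w_i\in C^2(\R^m)$ be correctors for $(x,p_i)$ with constants $\bar H_i$. Set $\bar p=\mu p_1+(1-\mu)p_2$ and $w=\mu w_1+(1-\mu)w_2$. Summing the two ergodic equations weighted by $\mu$ and $1-\mu$ and invoking the convexity of $a\mapsto|a|^2$,
$$ \mu|\tau^T Dw_1|^2+(1-\mu)|\tau^T Dw_2|^2\geq|\tau^T Dw|^2, \qquad \mu|\sigma^T(x,\cdot)p_1|^2+(1-\mu)|\sigma^T(x,\cdot)p_2|^2\geq|\sigma^T(x,\cdot)\bar p|^2, $$
makes $w$ a $C^2$ supersolution of \eqref{eqn:cellacriticounb} at $(x,\bar p)$ with constant $c=\mu\bar H_1+(1-\mu)\bar H_2$. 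Since $w$ has logarithmic growth, it belongs to $\Phi$, and applying Proposition \ref{resichi} to the convex form \eqref{eqergichi} forces $c\geq \bar H(x,\bar p)$, which is convexity in $p$.

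The main obstacle is (d). I would work first on the $\d$-problem. Set $w_\d^1=w_\d(\cdot;x,p/\mu)$, $w_\d^2=w_\d(\cdot;z,q)$, $\psi_\d=\mu w_\d^1-w_\d^2$, and $A=\tau^T Dw_\d^1+\sigma^T(x,\cdot)p/\mu$, $B=\tau^T Dw_\d^2+\sigma^T(z,\cdot)q$. Multiplying the $\d$-equation for $w_\d^1$ by $\mu$ and subtracting that for $w_\d^2$, the gradient-squared, cross and cost terms assemble into
$$ \d\psi_\d=\tr(\tau\tau^T D^2\psi_\d)+b(y)\cdot D\psi_\d+\mu|A|^2-|B|^2. $$
Completing the square yields the elementary identity
$$ \mu|A|^2-|B|^2\geq \frac{1}{\mu-1}|\mu A-B|^2, \qquad \mu\in(0,1), $$
and since $\mu A-B=\tau^T D\psi_\d+(\sigma^T(x,\cdot)p-\sigma^T(z,\cdot)q)$, at any point where $D\psi_\d$ vanishes the right-hand side collapses to $\tfrac{1}{\mu-1}|\sigma^T(x,\cdot)p-\sigma^T(z,\cdot)q|^2$. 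Securing such an approximate minimum $y_\eps$ of $\psi_\d$ in the unbounded setting is the delicate step: since $\psi_\d$ is bounded but need not attain its infimum, I would use a vanishing penalty tuned to the Liapounov function of Subsection \ref{liapfun}, so that $D\psi_\d(y_\eps)=o(1)$ and $D^2\psi_\d(y_\eps)\geq -o(1)I$ while $y_\eps$ remains controlled. Plugging in gives
$$ \d\psi_\d(y_\eps)\geq o(1)+\frac{1}{\mu-1}\sup_{y\in\R^m}|\sigma^T(x,y)p-\sigma^T(z,y)q|^2, $$
and combining with $\d\psi_\d(y_\eps)\leq \d\psi_\d(0)\to \mu\bar H(x,p/\mu)-\bar H(z,q)$ from the locally uniform convergence of Proposition \ref{thm:trucell} closes the argument as $\eps,\d\to 0$.
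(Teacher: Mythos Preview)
Your proposal follows the same strategy the paper defers to (Proposition~3.3 of \cite{BCG}), and the adaptations you make for the unbounded setting---working through the $\delta$-problem and using the Liapounov function as a penalty in (d)---are the natural ones. The computation in (d) is correct: the identity $\mu|A|^2-|B|^2\geq \tfrac{1}{\mu-1}|\mu A-B|^2$ together with $\mu A-B=\tau^TD\psi_\delta+(\sigma^T(x,\cdot)p-\sigma^T(z,\cdot)q)$ is exactly what one needs, and penalizing by $\eps\chi$ handles the lack of compactness since the drift term $-\eps b(y_\eps)\cdot D\chi(y_\eps)\geq 0$ by (E) and the remaining error terms are controlled by $\eps\chi(y_\eps)\leq C_\delta$.

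One point to tighten in (b): Proposition~\ref{resichi} as stated concerns \emph{solutions}, not supersolutions, so it does not directly give ``supersolution with constant $c$ in $\Phi$ implies $c\geq\lambda^*$''. The cleanest fix is to run your convexity argument at the $\delta$-level instead: with $A_i=\tau^TDw_\delta^i+\sigma^T(x,\cdot)p_i$, the same square-completion shows $\mu w_\delta^1+(1-\mu)w_\delta^2$ is a bounded supersolution of the $\delta$-equation at $(x,\bar p)$, and the comparison principle of \cite{DLL} (already invoked in Proposition~\ref{thm:trucell}) yields $\mu w_\delta^1+(1-\mu)w_\delta^2\geq w_\delta^{\bar p}$; multiplying by $\delta$ and letting $\delta\to 0$ gives convexity. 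This mirrors what you already do for (d) and avoids appealing to a supersolution version of Ichihara's result.
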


Finally we observe that equations like \eqref{eqn:cellacriticounb} have been studied in a non compact setting by Khaise and Sheu in \cite{KSsec}. They prove the existence of a constant $\bar{H}$ such that there is a unique (up to an additive constant) smooth  solution $w$ of \eqref{eqn:cellacriticounb} with prescribed growth. Moreover they provide a representation formula for $\bar{H}$ as the convex conjugate of  a suitable operator over a space of measures.
\section{The supercritical case: $\alpha >2$}\label{superunb}
 The ergodic problem is finding, for any $(\bar{x},\bar{p})\in \mathbb{R}^n\times \R^n$ fixed, a unique constant $\lambda \in \R$ such that the following   uniformly elliptic linear equation has a viscosity solution $w$
\begin{equation}\label{ergprobsuper}
\lambda- \tr(\tau \tau^T(y) D^2w(y) ) -b(y)\cdot Dw(y) -|\sigma(\bar{x},y)^T\bar{p}|^2=0. 
\end{equation}
This kind of erogic problems  has been studied in \cite{BCM}, see in particular Proposition $4.2$ and  Theorem $4.3$.

\begin{prop}\label{propips}
Let assumption (E) hold. For any $(\bar{x},\bar{p})\in \mathbb{R}^n\times \R^n$, there exists a unique invariant probability measure $\mu$ for the process 
\begin{equation}\label{ips}
dY_t=b(Y_t)dt+\sqrt{2}\tau(Y_t)dW_t.
\end{equation}
\end{prop}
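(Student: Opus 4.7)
The plan is to view this as a classical Has'minskii-type result for a diffusion on $\R^m$ and simply verify the hypotheses using (E) together with the standing assumptions on $\tau$. The standing assumptions (Lipschitz coefficients, bounded and uniformly non-degenerate $\tau$) already guarantee that \eqref{ips} admits a unique strong global solution defining a Feller Markov semigroup $P_t$ on $C_b(\R^m)$, and by classical parabolic regularity the transition kernels $p_t(y,\cdot)$ are smooth and strictly positive.

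For existence I would use the Lyapunov function $V(y)=1+|y|^2$. Its infinitesimal generator is
\[
\mathcal{L} V(y) = 2\, b(y)\cdot y + 2\tr(\tau\tau^T(y)),
\]
and by (E) together with the boundedness of $\tau$ there exist constants $\gamma, K >0$ such that
\[
\mathcal{L} V(y) \leq -\gamma V(y) + K \quad \text{for all } y\in\R^m.
\]
A standard localisation and Itô's formula then yield $E[V(Y_t)] \leq V(y) + K/\gamma$ uniformly in $t$, so the time averages $\mu_T := \frac{1}{T}\int_0^T P_t^*\delta_y\, dt$ have uniformly bounded second moments and are tight on $\R^m$. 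By the Krylov-Bogolyubov procedure, any weak limit point of $\mu_T$ as $T\to\infty$ is an invariant probability measure for \eqref{ips}, settling existence.

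For uniqueness I would appeal to Doob's theorem. The uniform ellipticity of $\tau\tau^T$ together with the Lipschitz regularity of $b$ and $\tau$ provides a strictly positive smooth heat kernel, which gives the strong Feller property of $P_t$; the Stroock-Varadhan support theorem, applied here thanks to the non-degeneracy of $\tau$, ensures irreducibility, i.e.\ every non-empty open set is charged with positive probability starting from any point. Strong Feller combined with irreducibility forces any two invariant measures to be mutually absolutely continuous and therefore to coincide, which yields uniqueness and concludes the proof.

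I expect the main technical obstacle to be the uniqueness step rather than existence: verifying strong Feller and irreducibility requires invoking non-trivial results (parabolic Schauder/heat kernel estimates and the support theorem), whereas existence is a direct consequence of the quadratic Lyapunov bound produced by (E). Since the paper cites \cite{BCM} for Propositions 4.2 and Theorem 4.3, which carry out exactly this programme under assumption (E), the proof can in fact be reduced to a citation once the above verification is recorded.
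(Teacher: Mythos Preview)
Your proposal is correct and aligns with the paper's own treatment: the paper does not give a self-contained proof but simply refers to \cite{BCM}, Proposition~4.2, noting that the argument rests on the existence of a Liapounov function for the generator $\mathcal{L}$ of \eqref{ips}. Your write-up spells out exactly this classical Has'minskii/Krylov--Bogolyubov existence via the quadratic Lyapunov function together with Doob-type uniqueness from strong Feller plus irreducibility, and you correctly observe at the end that all of this is subsumed by the citation to \cite{BCM}.
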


\begin{oss}\rm{
For the details we refer to \cite{BCM}, Proposition $4.2$. We just observe that the proof relies strongly on the existence of a Liapounov function as proved in the paragraph \ref{liapfun} for the infinitesimal generator of the process \eqref{ips}, that is, the operator $\mathcal{L}(y,q,Y)= \tr(\tau\tau^T(y)Y)-b(y)\cdot q$.
}
\end{oss}

Consider the approximate \textit{$\delta$-ergodic problem} for fixed $(\bar{x},\bar{p},\bar{X})$
\begin{equation}\label{eqn:deltacell}
\delta w_{\delta}(y) - |\sigma(\bar{x}, y)^T \bar{p}|^2 - b(y) \cdot D_y w_{\delta}(y)- \tr(\tau(y) \tau(y)^T D_{yy}^2 w_{\delta}(y))=0\, \, \mbox{in} \, \, \mathbb{R}^m.
\end{equation}
We have the following proposition. 

\begin{prop}\label{thm:trucellsopra}
Let assumption (E) and (S1) holds. For any fixed $(\bar{x},\bar{p})$ there exists a unique solution $w_\d \in C^2(\R^m)$ of \eqref{eqn:deltacell} satisfying
\begin{equation}\label{bounddsopracritico}
-\frac{1}{\d}\inf_{y\in\R^m}|\sigma(\bar{x},y)^T\bar{p}|^2\leq w_\d(y)\leq \frac{1}{\d}\sup_{y\in\R^m}|\sigma(\bar{x},y)^T\bar{p}|^2
\end{equation}
such that   
 \begin{equation}\label{formulasuper}
 \lim_{\d\to 0} \d w_\d(y)=\int_{\mathbb{R}^m} \! |\sigma(\bar{x}, y) ^T \bar{p}|^2 \, d\mu(y):=\bar{H}(\bar{x},\bar{p}) \mbox{ locally uniformly },
 \end{equation}
 where $\mu$ is the unique invariant probability measure of the process \eqref{ips}.
Moreover there exists a viscosity solution $w\in C^2(\R^m)$ of \eqref{ergprobsuper} with $\lambda=\bar{H}(\bar{x},\bar{p})$ satisfying \eqref{loggrowth}. 
\end{prop}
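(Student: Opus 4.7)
The plan is to mirror the blueprint of Proposition \ref{thm:trucell}, simplifying where the linearity of \eqref{eqn:deltacell} helps and modifying the identification step so as to exploit the invariant measure supplied by Proposition \ref{propips}. As \eqref{eqn:deltacell} is linear, uniformly elliptic and has bounded forcing $|\sigma(\bar x,\cdot)^T\bar p|^2$, the Perron-Ishii method and the comparison principle of \cite{DLL} yield a unique bounded continuous viscosity solution $w_\d$; comparison with the constants $\pm\d^{-1}\sup_y|\sigma^T\bar p|^2$ gives \eqref{bounddsopracritico}. The $\d$-uniform local Lipschitz estimate of Lemma \ref{stimagradunbsopracrit}, together with interior Schauder theory for linear uniformly elliptic equations, then upgrades $w_\d$ to $C^2(\R^m)$.

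Next, on each ball $\bar B_k$ the oscillation of $\d w_\d$ is at most $2k\d C_k\to 0$ by Lemma \ref{stimagradunbsopracrit}, so $\{\d w_\d\}$ is locally equicontinuous; together with \eqref{bounddsopracritico} the diagonal Ascoli-Arzel\`a extraction of Step 1 of Proposition \ref{thm:trucell} produces a subsequence along which $\d w_\d\to\lambda$ and $v_\d:=w_\d-w_\d(0)\to w$ locally uniformly, for some $\lambda\in\R$ and $w\in C(\R^m)$. Stability of viscosity solutions, applied to the rewriting $\d v_\d+\d w_\d(0)-b\cdot Dv_\d-\tr(\tau\tau^T D^2v_\d)=|\sigma(\bar x,y)^T\bar p|^2$, shows that $(\lambda,w)$ solves \eqref{ergprobsuper}, and linear elliptic regularity promotes $w$ to $C^2(\R^m)$. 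The growth estimate \eqref{loggrowth} is obtained by the same barrier argument as in Step 2 of Proposition \ref{thm:trucell}: using (E) and the boundedness of $\sigma,\tau$ one checks that $g(y)=C\log\sqrt{|y|^2+1}$ is a supersolution of \eqref{eqn:deltacell} outside a large ball provided $C$ is large enough (the computation being strictly easier than in the critical case since the quadratic term $|\tau^T Dw|^2$ is absent), after which comparison and the symmetric lower barrier yield uniform-in-$\d$ logarithmic bounds on $v_\d$ that pass to the limit.

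The distinctive step is identifying $\lambda=\bar H(\bar x,\bar p)$. By Feynman-Kac applied to the linear equation \eqref{eqn:deltacell},
\begin{equation*}
w_\d(y)=E^y\!\left[\int_0^\infty e^{-\d t}\,|\sigma(\bar x,Y_t)^T\bar p|^2\,dt\right],
\end{equation*}
where $Y_t$ solves \eqref{ips} with $Y_0=y$. Integrating against the invariant measure $\mu$ of Proposition \ref{propips} and combining Fubini with the invariance property $\int E^y[f(Y_t)]\,d\mu(y)=\int f\,d\mu$ gives
\begin{equation*}
\d\int_{\R^m}w_\d\,d\mu=\int_{\R^m}|\sigma(\bar x,y)^T\bar p|^2\,d\mu(y).
\end{equation*}
Since $\d w_\d$ is uniformly bounded by \eqref{bounddsopracritico} and converges locally uniformly (along the subsequence) to the constant $\lambda$, dominated convergence on the left-hand side yields $\lambda=\int|\sigma(\bar x,\cdot)^T\bar p|^2\,d\mu=\bar H(\bar x,\bar p)$. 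As this value is independent of the extracted subsequence, the whole family $\d w_\d$ converges, which is \eqref{formulasuper}. The main obstacle is precisely this identification: the Fubini/invariance exchange demands some integrability of $E^y[|\sigma^T\bar p|^2(Y_t)]$, which is automatic from the boundedness of $\sigma$; alternatively, one may test \eqref{eqn:deltacell} directly against $\mu$, approximating $w_\d$ by smooth compactly supported cutoffs and controlling the boundary error through the Liapounov function of subsection \ref{liapfun}, as done in \cite{BCM}.
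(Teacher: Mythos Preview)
Your proposal is correct and follows essentially the same route as the paper, which simply refers to the proof of Proposition~\ref{thm:trucell} for existence of $(\lambda,w)$ and to \cite{BCM}, Theorem~4.3 for the identification \eqref{formulasuper}. You supply a concrete version of the latter via the Feynman--Kac representation and integration against the invariant measure, which is in the spirit of \cite{BCM}; the only cosmetic point is that comparison with the constant sub/supersolutions $\d^{-1}\inf_y|\sigma^T\bar p|^2$ and $\d^{-1}\sup_y|\sigma^T\bar p|^2$ (rather than $\pm\d^{-1}\sup$) is what gives exactly the bounds stated in \eqref{bounddsopracritico}.
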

First we prove the following local gradient bound for the solution of the $\d$-ergodic problem.
\begin{lem}\label{stimagradunbsopracrit}
 Let (S1) hold. Let $\delta>0$ and $w_\d \in C^2(\R^m)$ be the unique bounded solution  of \eqref{eqn:deltacell}. 
 Then for all $k\in \mathbb{N}$ and $\bar{x}, \bar{p} \in \mathbb{R}^n$, there exists $C>0$ such that  it holds
\begin{equation}\label{bernunbsopra}
\max_{y\in \bar{B}_k}|D_y w_\delta(y;\bar{x},\bar{p})| \leq C,
\end{equation}
where $B_k$ is the ball of radius $k$ and center $0$ and $C$ depends on $k$ and on $\bar{p}$.
\end{lem}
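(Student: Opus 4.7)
The plan is to adapt the Bernstein-type argument with a localizing cutoff used for the critical case in Lemma \ref{stimagradunbcrit} (going back to \cite{EI}) to the linear uniformly elliptic equation \eqref{eqn:deltacell}. Fix $k\in\mathbb{N}$, pick a smooth cutoff $\eta\in C_c^\infty(\R^m;[0,1])$ with $\eta\equiv 1$ on $\bar B_k$ and $\operatorname{supp}\eta\subset B_{k+1}$, and introduce the auxiliary function
$$\Phi(y):=\eta(y)^2|Dw_\d(y)|^2+A\bigl(w_\d(y)-\underline w_\d\bigr),$$
where $\underline w_\d:=\inf_{\bar B_{k+1}}w_\d$ and $A=A(k,\bar p)>0$ will be tuned. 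Since $w_\d\in C^2(\R^m)$ by Proposition \ref{thm:trucellsopra} and $\Phi$ is continuous on the compact set $\bar B_{k+1}$, it attains its maximum at some $y_0\in\bar B_{k+1}$; one may assume $\eta(y_0)>0$, since otherwise $\eta^2|Dw_\d|^2$ on $B_k$ is already controlled by $A\sup_{\bar B_{k+1}}(w_\d-\underline w_\d)$.

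The Bernstein computation then proceeds as usual: differentiate \eqref{eqn:deltacell} in $y_i$, multiply by $2(w_\d)_{y_i}$, sum, and use the identity
$$\tfrac{1}{2}\tr\bigl(\tau\tau^T D^2|Dw_\d|^2\bigr)=\tr\bigl(\tau\tau^T(D^2w_\d)^2\bigr)+\sum_i(w_\d)_{y_i}\tr\bigl(\tau\tau^T D^2(w_\d)_{y_i}\bigr),$$
to derive a second-order linear equation for $|Dw_\d|^2$ that features the good term $\tr(\tau\tau^T(D^2w_\d)^2)\geq\theta|D^2w_\d|^2$ thanks to \eqref{unifnondeg}. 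At $y_0$, the maximum conditions $D\Phi(y_0)=0$ and $D^2\Phi(y_0)\leq 0$, contracted against $\tau(y_0)\tau(y_0)^T$, combined with \eqref{eqn:deltacell} itself to express $\tr(\tau\tau^T D^2w_\d)$ in terms of $|\sigma^T\bar p|^2$, $b\cdot Dw_\d$, and $\d w_\d$, produce a quadratic inequality in $|Dw_\d(y_0)|$. Cross terms coming from $|D\eta|$, $|D^2\eta|$, the Lipschitz constants of $b$, $\tau$, and $\sigma(\bar x,\cdot)$ on $\bar B_{k+1}$, and from $|Dw_\d||D^2w_\d|$ products, are absorbed by Cauchy--Schwarz into $\theta|D^2w_\d|^2$, giving a bound on $|Dw_\d(y_0)|^2$ that depends on $k$, $\bar p$ and on $\sup_{\bar B_{k+1}}(w_\d-\underline w_\d)$.

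The main obstacle, and the point at which assumption (S1) is crucial, is the absorption of the term $\sum_i(w_\d)_{y_i}\partial_{y_i}|\sigma(\bar x,y)^T\bar p|^2$ arising from differentiating the forcing: in the critical case the coercive gradient nonlinearity $|\tau^T Dw_\d|^2$ does this automatically, while here one must rely on the uniform lower bound $|\sigma^T\bar p|^2\geq\nu|\bar p|^2$ supplied by (S1), together with the $\d$-uniform bound \eqref{bounddsopracritico}. Choosing $A$ proportional to $\nu|\bar p|^2$, the linear drift $A\,b(y_0)\cdot Dw_\d(y_0)$ coming from the additional $Aw_\d$ term in $\Phi$ provides the coercivity needed to close the quadratic inequality in $|Dw_\d(y_0)|$. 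Finally, to make the bound $\d$-uniform, $\sup_{\bar B_{k+1}}(w_\d-\underline w_\d)$ is controlled independently of $\d$ by the Krylov--Safonov interior oscillation estimate applied to the uniformly elliptic linear equation \eqref{eqn:deltacell} with bounded right-hand side $|\sigma^T\bar p|^2-\d w_\d$, which yields \eqref{bernunbsopra}.
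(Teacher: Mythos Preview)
Your approach has a genuine gap. The direct Bernstein method on the linear equation \eqref{eqn:deltacell} will only yield a bound of the form
\[
\max_{\bar B_k}|Dw_\delta|\;\le\;C\bigl(k,\bar p,\;\operatorname{osc}_{\bar B_{k+1}}w_\delta\bigr),
\]
and the whole point of the lemma is that $C$ must be \emph{independent of $\delta$}. Your two devices for removing the $\delta$-dependence both fail. First, the claim that ``the linear drift $A\,b(y_0)\cdot Dw_\delta(y_0)$ provides the coercivity needed'' is not correct: $b\cdot Dw_\delta$ has no sign, and substituting the equation only converts it into $A\delta w_\delta-A\operatorname{tr}(\tau\tau^T D^2w_\delta)-A|\sigma^T\bar p|^2$, i.e.\ a bounded constant plus a second-order term; nothing here dominates the $C|Dw_\delta|^2$ terms produced by the Lipschitz drift and by the cutoff. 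Second, the Krylov--Safonov estimate you invoke reads
\[
[\,w_\delta\,]_{C^\alpha(B_{k+1})}\;\le\;C\bigl(\|w_\delta\|_{L^\infty(B_{k+2})}+\|\,|\sigma^T\bar p|^2-\delta w_\delta\,\|_{L^\infty}\bigr),
\]
and the first term on the right is of order $1/\delta$ by \eqref{bounddsopracritico}; the Harnack inequality does no better, since for a nonnegative solution it gives $\sup\le C(\inf+\|f\|)$ and hence $\operatorname{osc}\le(C-1)\inf+C\|f\|$, which again blows up as $\delta\to0$.

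The paper circumvents this by a Hopf--Cole change of unknown, and this is precisely where (S1) enters. Since $|\sigma(\bar x,y)^T\bar p|^2\ge\nu|\bar p|^2>0$, the constant $1$ is a subsolution of \eqref{eqn:deltacell} for $\delta\le\nu|\bar p|^2$, so $w_\delta\ge1$ and one may set $v_\delta=\log(w_\delta-M)$ with $M=\min_{\bar B_k}w_\delta-1\ge0$. The equation for $v_\delta$ acquires the coercive term $-|\tau^T Dv_\delta|^2$, so the Bernstein argument of Lemma~\ref{stimagradunbcrit} applies verbatim and yields a $\delta$-uniform local Lipschitz bound for $v_\delta$; transforming back via $Dw_\delta=e^{v_\delta}Dv_\delta$ and $v_\delta(\bar y)=0$ gives \eqref{bernunbsopra}. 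In short, (S1) is not used to feed a constant into the Bernstein inequality as you suggest, but to guarantee positivity of $w_\delta$ so that the logarithm makes sense and manufactures the missing gradient coercivity.
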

\begin{proof}
 We suppose that $\bar{p} \neq 0$, otherwise $w_\d=0$ is the unique solution of \eqref{eqn:deltacell}. We observe that, by assumption (S1), if $\d \leq \xi |\bar{p}|^2$, $1$ is a subsolution of \eqref{eqn:deltacell}. Then, for such $\d, w_\d \geq 1$.  Let $\bar{y}$ such that $w_\d(\bar{y})=\min_{\bar{B}_k}w_\d(y)$ and denote $M:=w_\d(\bar{y})-1\geq 0$. Let for $ y \in \bar{B}_k$
\begin{equation}\label{defvd}
v_\d(y)=\log(w_\d(y)- M).
\end{equation}
Then $v_\d$ satisfies on $\bar{B}_k$
\begin{equation}
\label{coercivitysuper}
\d(1+ e^{-v_\d(y)}M)-\tr(\tau \tau^T D^2v_\d(y))-b\cdot D v_\d(y) -|\tau^T Dv_\d(y)|^2-e^{-v_\d(y)}|\sigma^T\bar{p}|^2=0
\end{equation}
and
\begin{equation}\label{segnovd}
v_\d\geq 0 \mbox{ and } v_\d(\bar{y})=0.
\end{equation}
Note that for $y \in \bar{B}_k$, we have $1+ e^{-v_\d(y)}M \geq 0$ and $e^{-v_\d(y)}\leq 1$. Then, by the coercivity of \eqref{coercivitysuper} and analogously to the critical case (see the proof of Lemma \ref{stimagradunbcrit}), we prove that there esists some positive constant $C$, depending on $k$ and $\bar{p}$, such that
\begin{equation}\label{locestvd}
\max_{y\in \bar{B}_k}|D_y v_\delta(y;\bar{x},\bar{p})| \leq C.
\end{equation}
By \eqref{defvd} and \eqref{segnovd}, we have for $y \in \bar{B}_k$
$$ 
D w_\d(y)=Dv_\d(y) e^{v_\d(y)}=Dv_\d(y) e^{v_\d(y)-v_\d(\bar{y})}
$$
and, by \eqref{locestvd}, we finally get \eqref{bernunbsopra}.
\end{proof}
\begin{proof}[Proof of Proposition \ref{thm:trucellsopra}]
For the identification of $\bar{H}$ and in particular for the proof of \eqref{formulasuper} we refer to \cite{BCM},  Theorem $4.3$. For the existence of the corrector we note that the proof can be carried out analogously as in the critical case and we refer to the proof of Proposition \ref{thm:trucell}.  
\end{proof}
We observe that $\bar{H}$ satisfies the properties $(a),(b),(c),(d)$ of Proposition \ref{prop:contbarh12unb}, which can be proved with similar arguments.
\vspace{0.5cm}


\section{Gradient bounds}\label{sectiongradient}
In this section, we prove global uniform Lipschitz bounds for the solution of the approximate $\d$-ergodic problems and of the true cell problems.
The results are stated in the following propositions. 

\begin{prop}\label{globunifwd}
Let assumptions (U) and (S) hold. 
Let $w_\d \in C^2(\R^m)$ be the unique bounded solution of  \eqref{cell} for $\alpha=2$ and of \eqref{eqn:deltacell} for $\alpha>2$. Then for all $x,y \in \mathbb{R}^m$ we have
\begin{equation}\label{globlipeq3d}
|w_\d(y;\bar{x},\bar{p})-w_\d(x;\bar{x},\bar{p})|\leq C|x-y|,
\end{equation}
where  $C$ is a positive constant, depending  on $\bar{x},\bar{p}, ||\tau||_\infty, ||\sigma||_\infty, m$, the Lipschitz constants of $\tau, b, \sigma$ and is independent of $\d$.
\end{prop}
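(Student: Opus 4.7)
The plan is to prove the global Lipschitz estimate \eqref{globlipeq3d} by a doubling-of-variables argument of Ishii--Lions type, activating the uniform ellipticity of $\tau\tau^T$ through the concavity of a suitable test function, and combining it with the Ornstein--Uhlenbeck contraction coming from $b(y)=b-y$ at infinity encoded in assumption (U). Concretely, I would introduce
$$\Phi(x,y) := w_\d(x) - w_\d(y) - L\varphi(|x-y|) - \eta(|x|^2+|y|^2),$$
where $L>0$ is a large constant (depending on $\bar x,\bar p$ and the data), $\eta>0$ is a small localization parameter, and $\varphi\in C^2([0,\infty))$ is a strictly concave modulus with $\varphi(0)=0$, $\varphi'(0^+)=1$, $\tfrac12\leq \varphi'\leq 1$, and $-\varphi''(r)\gtrsim 1/r$ (for instance a smoothed version of $\log(1+r)$). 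Boundedness of $w_\d$ and the penalty ensure that $\Phi$ attains its maximum at some $(x_\eta,y_\eta)\in \R^m\times \R^m$. The goal is to show $\Phi(x_\eta,y_\eta)\leq 0$ for $L$ large enough, uniformly in $\eta$ and $\d$; sending $\eta\to 0$ then yields the thesis.

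Arguing by contradiction, suppose $\Phi(x_\eta,y_\eta)>0$. Then $x_\eta\neq y_\eta$; writing $r:=|x_\eta-y_\eta|$, $e:=(x_\eta-y_\eta)/r$, and $p_\eta:=L\varphi'(r)e$, the first-order maximum conditions give $Dw_\d(x_\eta)=p_\eta+2\eta x_\eta$ and $Dw_\d(y_\eta)=p_\eta-2\eta y_\eta$. I would split into two regions: a compact one where both points lie in a ball $\bar B_{R_0}$ with $R_0\geq R_1$ suitably large, in which the local Lipschitz bound of Lemma \ref{stimagradunbcrit} (respectively Lemma \ref{stimagradunbsopracrit}) immediately gives $|w_\d(x_\eta)-w_\d(y_\eta)|\leq C_{R_0} r$, contradicting $\Phi>0$ as soon as $L>C_{R_0}$; and a ``far'' one where at least one point is outside $\bar B_{R_0}$. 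In the far region I would use (U) to subtract the two PDEs, so that $\tau\tau^T$ is constant and the drift is pure OU. The Ishii--Lions Hessian bound combined with uniform ellipticity $e^T\tau\tau^T e\geq \theta$ and $\varphi''(r)<0$ yields
$$\tr\bigl(\tau\tau^T(D^2w_\d(x_\eta)-D^2w_\d(y_\eta))\bigr)\leq 2\theta L\varphi''(r)+CL\varphi'(r)/r+O(\eta),$$
while the OU drift difference produces the crucial term $-Lr\varphi'(r)$ (the only term of order $L$ with the right sign), and the source $|\sigma^T\bar p|^2$ is controlled by the Lipschitz continuity of $\sigma$. For the critical case, since $\tau$ is constant at infinity and the two gradients coincide up to an $O(\eta)$ correction, the quadratic term $|\tau^T Dw_\d|^2$ contributes only $O(L\eta(|x_\eta|+|y_\eta|))$, while the correlation $2(\tau\sigma^T\bar p)\cdot Dw_\d$ is $o(Lr)$ by (S2). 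Collecting all contributions,
$$0\leq \d(w_\d(x_\eta)-w_\d(y_\eta))\leq 2\theta L\varphi''(r)-Lr\varphi'(r)+o(Lr)+O\bigl(r+\eta(1+|x_\eta|^2+|y_\eta|^2)\bigr),$$
the $\eta$-penalty absorbs the quadratic corrections, and choosing $L$ large produces the desired contradiction.

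The main obstacle I expect is the intermediate regime in which one of $|x_\eta|,|y_\eta|$ lies in $\bar B_{R_1}$ while the other is far out: neither the local Lipschitz bound nor the clean OU formalism applies at both points simultaneously. I would handle this by exploiting the one-sided OU contribution from the outer point alone: the drift there produces a term of order $-L\varphi'(r)|x_\eta|$ (using $x_\eta\cdot e\gtrsim |x_\eta|$ when $|x_\eta|\gg |y_\eta|$), which dominates the residual bounded contributions at the inner point once $R_0$ is chosen sufficiently large. A second, more structural difficulty, specific to the critical case $\alpha=2$, is controlling the quadratic gradient nonlinearity $|\tau^T Dw_\d|^2$; this is precisely the reason assumption (S2) is needed, so that the correlation term involving $\sigma$ does not spoil the leading-order OU contraction at infinity.
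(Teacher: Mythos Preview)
Your overall strategy---doubling of variables, extracting a good term from the Ornstein--Uhlenbeck drift at infinity, and invoking the local Lipschitz bounds on compact sets---is on the right track, but it differs structurally from the paper's proof and has two gaps.

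\textbf{The paper's route.} The proof is organized in two steps. First (Proposition~\ref{unifcont}) a $\delta$-\emph{dependent} global H\"older bound is obtained by the classical Ishii--Lions method with penalty $C_\delta|x-y|^\alpha$, $\alpha\in(0,1)$; here the uniform ellipticity enters through the concavity of $t\mapsto t^\alpha$. Second, the $\delta$-\emph{uniform} Lipschitz bound is proved with the \emph{linear} penalty $C|x-y|$ and a localization $\psi_R$; the sole source of the good term is the OU drift, which yields exactly $C|x-y|$ on the left of Lemma~\ref{eqprimalimrlem}, while the second-order terms contribute only $o_R(1)$ because $\tau$ is constant outside $B_{R_1}$. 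The H\"older bound from Step~1 is used only to guarantee $\liminf_{R\to\infty}|x_R-y_R|>0$ (second claim of Lemma~\ref{pointmaxout}), so that one may divide by $|x-y|$ at the end. Your attempt collapses both mechanisms into a single concave $\varphi$; this is conceivable, but not as you have written it.

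\textbf{First gap: the test function is inconsistent.} A function with $\tfrac12\le\varphi'\le1$ and $-\varphi''(r)\gtrsim 1/r$ for all $r>0$ cannot exist: integrating $-\varphi''$ from $1$ to $R$ gives $\varphi'(1)-\varphi'(R)\gtrsim\log R\to\infty$, contradicting $\varphi'\ge\tfrac12$. Your example $\log(1+r)$ has $\varphi'\to0$, so the OU term $L r\varphi'(r)$ does not diverge. A viable one-step $\varphi$ would need $\varphi'\ge c>0$ (to keep the OU contribution $\gtrsim Lcr$) and $\varphi''\le0$ bounded; then the ellipticity term is $O(L)$, not divergent, and your displayed inequality must be rewritten accordingly.

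\textbf{Second gap: the intermediate regime.} The paper does not attempt to estimate when one point is inside $\bar B_{\bar R}$ and the other outside; instead Lemma~\ref{pointmaxout} \emph{relocates} the inner maximum point to $\partial B_{\bar R+1}$ using collinearity of the segment and the local $\delta$-uniform Lipschitz bound of Lemma~\ref{stimagradunbcrit}/\ref{stimagradunbsopracrit}, reducing to the case where both points lie in the OU region. Your proposed handling via a one-sided OU term relies on $x_\eta\cdot e\gtrsim|x_\eta|$, which is not guaranteed (the unit vector $e$ depends on both points), and in this mixed regime $\tau(x_\eta)\neq\tau(y_\eta)$ in general, so neither the Hessian difference reduces to $O(\eta)$ nor does the quadratic gradient term cancel cleanly. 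This is the step where the argument as written would fail.
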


As a straightforward corollary of Proposition \ref{globunifwd}, we get the following global  gradient bound for the correctors. 
 
\begin{prop}\label{globunifw}
Let assumptions (U) and (S) hold. 
When $\alpha=2$ let $w \in C^2(\R^m)$ be a solution of  \eqref{eqn:cellacriticounb}  for $\lambda=\bar{H}(\bar{x},\bar{p})$ where  $\bar{H}(\bar{x},\bar{p})$ is defined in Proposition \ref{thm:trucell}; when $\alpha>2$ let $w \in C^2(\R^m)$ be the solution (defined   in Proposition \ref{thm:trucellsopra}) of  \eqref{ergprobsuper}  for $\lambda=\bar{H}(\bar{x},\bar{p})$. Then
\begin{equation}\label{globlipeq3}
\sup_{y\in \R^m}|D_yw(y;\bar{x},\bar{p})|\leq C,
\end{equation}
where  $C$ is a positive constant, depending  on $\bar{x},\bar{p}, ||\tau||_\infty, ||\sigma||_\infty, m$ and the Lipschitz constants of $\tau, b, \sigma$.
\end{prop}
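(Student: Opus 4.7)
The plan is to deduce Proposition \ref{globunifw} directly from the $\d$-uniform Lipschitz estimate of Proposition \ref{globunifwd} by passing to the limit along the subsequences constructed in Proposition \ref{thm:trucell} (critical case) and Proposition \ref{thm:trucellsopra} (supercritical case). The key point is that the constant $C$ in \eqref{globlipeq3d} depends only on $\bar x$, $\bar p$, the norms of $\tau$ and $\sigma$, their Lipschitz constants, and the dimension $m$, and crucially not on $\d$, so it survives the limit $\d\to 0$.

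More precisely, I would argue as follows. Define $v_\d(y):=w_\d(y)-w_\d(0)$, so that by Proposition \ref{globunifwd}
\begin{equation*}
|v_\d(y)-v_\d(x)|=|w_\d(y)-w_\d(x)|\leq C|y-x|\qquad \forall\,x,y\in\R^m,
\end{equation*}
with $C$ independent of $\d$. From the proof of Proposition \ref{thm:trucell} (and the analogous construction in Proposition \ref{thm:trucellsopra}) we know that there exists a subsequence $\d_n\to 0$ along which $v_{\d_n}$ converges locally uniformly on $\R^m$ to a corrector $\tilde w\in C^2(\R^m)$ solving the corresponding true ergodic problem at $\lambda=\bar H(\bar x,\bar p)$, and satisfying the logarithmic growth \eqref{loggrowth}. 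Passing to the limit $n\to\infty$ in the displayed inequality yields
\begin{equation*}
|\tilde w(y)-\tilde w(x)|\leq C|y-x|\qquad \forall\,x,y\in\R^m,
\end{equation*}
and since $\tilde w\in C^2(\R^m)$ this is equivalent to the pointwise bound $\sup_{y\in\R^m}|D_y\tilde w(y;\bar x,\bar p)|\leq C$.

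To conclude the statement for an arbitrary solution $w$ of the true ergodic problem at $\lambda=\bar H(\bar x,\bar p)$ satisfying \eqref{loggrowth}, I invoke the uniqueness up to an additive constant of such solutions (Proposition \ref{resichi} and its analogue for the supercritical linear equation; note that in the critical case this is Theorem 2.2 of \cite{ichiprimo} applied to the reformulation \eqref{eqergichi}, while in the supercritical case the equation is linear and uniqueness in the class \eqref{loggrowth} follows from \cite{BCM}). Hence $w=\tilde w+\text{const}$, so $Dw=D\tilde w$ and the same bound holds.

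The only step requiring any care is verifying that the limit corrector $\tilde w$ really coincides, up to an additive constant, with the $w$ appearing in the statement; but this is immediate once the uniqueness class is fixed as in \eqref{loggrowth}. There is no analytic obstacle beyond Proposition \ref{globunifwd} itself, which is where the substantive work lies; the present proposition is truly a corollary, obtained by stability of Lipschitz constants under locally uniform convergence.
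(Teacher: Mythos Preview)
Your proposal is correct and matches the paper's approach exactly: the paper states Proposition~\ref{globunifw} as ``a straightforward corollary of Proposition~\ref{globunifwd}'', and you have spelled out precisely this corollary argument---pass the $\delta$-uniform Lipschitz bound \eqref{globlipeq3d} to the locally uniform limit $v_\delta\to w$ constructed in Propositions~\ref{thm:trucell} and~\ref{thm:trucellsopra}, then use uniqueness up to an additive constant to cover any admissible corrector.
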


The strategy of the proof consists, roughly speaking, in two steps. 
In Step $1$ we prove an H\"older bound not uniform in $\d$ (see Proposition \ref{unifcont}).  The method is essentially based on the Ishii-Lions method and relies mainly on the uniform ellipticity of the equation. 
In Step $2$ we prove the global uniform gradient bound stated in Proposition \ref{globunifwd}. We remark that the proof is non standard mainly because we do not use any compactness or periodicity of the coefficients, namely  our result  holds in all the space and is independent of $\d$.

The proof of Proposition \ref{unifcont} and Proposition \ref{globunifwd} are carried out only for $\alpha=2$ since the case $\alpha>2$ is analogous and even simpler.

Note that, thanks to the uniform local estimate previously proved in Lemma \ref{stimagradunbcrit} for $\alpha=2$ and Lemma \ref{stimagradunbsopracrit} for $\alpha>2$ (see respectively Section \ref{critunb} and \ref{superunb}), the main difficulties  come from the behaviour at infinity, which we treat by the assumptions (U) and (S2).  

\begin{step}1-Global H\"older bounds
\upshape


The proof of Proposition \ref{unifcont} is  based on the Ishii-Lions method which allows us to take profit of the uniform ellipticity. As usual in the  Ishii-Lions method, the estimate that we  prove in \eqref{unifconteq} is not uniform in $\d$. This is the main difference between Proposition \ref{unifcont} and Proposition \ref{globunifwd} and, mainly for this reason, the  proof of Proposition \ref{unifcont} is more standard.

Note that in the following proof we do not need assumption (S), which, on the contrary, is fundamental in the proof of Proposition \ref{globunifwd}. 




\begin{prop}\label{unifcont}
 Let assumptions (U) hold.  Let $w_\d \in C^2(\R^m)$ be the unique bounded solution of  \eqref{cell} for $\alpha=2$ and of \eqref{eqn:deltacell} for $\alpha>2$.
 Then  there exists $C_\d>0$ and $\alpha \in (0,1)$ such that 
\begin{equation}\label{unifconteq}
|w_\d(x;\bar{x},\bar{p})-w_\d(y;\bar{x},\bar{p})|\leq C_\d|x-y|^\alpha \quad \mbox{for all  }x,y 
\in \R^m,
\end{equation}
where $C_\d$ depends on $\d,\alpha, ||\tau||_\infty, ||\sigma(\bar{x},\cdot)||_\infty, \bar{p}$, the Lipschitz constants of $\tau,b, \sigma$ and $\theta$ of \eqref{unifnondeg}.
\end{prop}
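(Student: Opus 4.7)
The plan is to adapt the classical Ishii--Lions doubling-variables method to the present unbounded setting, taking full advantage of the uniform ellipticity \eqref{unifnondeg} of $\tau\tau^T$ and of the $L^\infty$ bound \eqref{boundd} on $w_\d$ (the latter being permissible here since the constant is allowed to depend on $\d$).

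First, fix $\d>0$ and an exponent $\alpha\in(0,1)$ to be chosen close enough to $1$. For parameters $L>0$ large and $\eta>0$ small, introduce
$$\Psi(x,y):=w_\d(x)-w_\d(y)-L|x-y|^\alpha-\eta(|x|^2+|y|^2).$$
Since $w_\d$ is bounded and $\eta(|x|^2+|y|^2)$ is coercive, $\Psi$ attains its maximum at some $(\bar x,\bar y)\in\R^m\times\R^m$. The goal is to show that for $L$ sufficiently large (depending on $\d,\eta$ and the data) one necessarily has $\bar x=\bar y$; combined with $\Psi(x,y)\le \Psi(x,x)$ this yields
$$w_\d(x)-w_\d(y)\le L|x-y|^\alpha+\eta(|x|^2+|y|^2),$$
and letting $\eta\to 0$ and exchanging $x,y$ produces \eqref{unifconteq}.

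Suppose, for contradiction, that $\bar x\neq\bar y$, and set $\bar z:=\bar x-\bar y$. Jensen--Ishii's lemma applied to $w_\d$ as a subsolution at $\bar x$ and a supersolution at $\bar y$ yields $X,Y\in\Si^m$ satisfying the usual matrix inequality in terms of $D^2\Psi(\bar x,\bar y)$. The Hessian of $|z|^\alpha$ at $\bar z$ has eigenvalue $\alpha(\alpha-1)|\bar z|^{\alpha-2}$ (strongly negative) in the radial direction $\bar z/|\bar z|$ and $\alpha|\bar z|^{\alpha-2}$ (positive) in each of the $m-1$ orthogonal directions. Subtracting the two viscosity inequalities, the dominant contribution $-\tr(\tau\tau^T(\bar x)X-\tau\tau^T(\bar y)Y)$ gives, by testing in the direction $\bar z/|\bar z|$ and invoking \eqref{unifnondeg}, a term of size at least $L\theta\alpha(1-\alpha)|\bar z|^{\alpha-2}$, up to a correction of order $L|\bar z|^\alpha \cdot\|\tau\|_{\mathrm{Lip}}^2$ from the Lipschitz continuity of $\tau$. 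The remaining first-order and quadratic terms in the PDE -- the drift $b\cdot Dw_\d$ (using $b(y)=b-y$ from (U) for $|y|>R_1$), the correlation $2\tau\sigma^T\bar p\cdot Dw_\d$, the forcing $|\sigma^T\bar p|^2$, and, only in the critical case, $|\tau^T Dw_\d|^2$ -- are estimated through $Dw_\d(\bar x)\approx L\alpha|\bar z|^{\alpha-1}\bar z/|\bar z|+2\eta\bar x$ (and analogously at $\bar y$). Combined with the a priori bound $\eta(|\bar x|^2+|\bar y|^2)\le 2\|w_\d\|_\infty\le 2C/\d$ deduced from $\Psi(\bar x,\bar y)\ge \Psi(0,0)$, the total contribution of these terms is bounded by $C_{\d,\eta}(L|\bar z|^{\alpha-1}+L^2|\bar z|^{2\alpha-2}+1)$.

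The main obstacle is controlling the quadratic gradient term $|\tau^T Dw_\d|^2$ in the critical case $\alpha=2$, which scales as $L^2|\bar z|^{2\alpha-2}$. For this to be absorbed by the elliptic gain $L\theta\alpha(1-\alpha)|\bar z|^{\alpha-2}$, one needs $L|\bar z|^\alpha$ to stay bounded; this is automatic at the maximum point, since $L|\bar z|^\alpha\le w_\d(\bar x)-w_\d(\bar y)\le 2\|w_\d\|_\infty\le 2C/\d$. Choosing $\alpha\in(1/2,1)$ close enough to $1$ so that $\alpha(1-\alpha)$ is still strictly positive while $2\alpha-2>\alpha-2$, and then $L$ large depending on $\d,\eta,\theta,\|\tau\|_\infty,\|\sigma\|_\infty,|\bar p|$ and the Lipschitz constants of $\tau,b,\sigma$, all error contributions become strictly smaller than the elliptic gain, contradicting the combined viscosity inequalities (whose left-hand side contains a bounded quantity $\d(w_\d(\bar x)-w_\d(\bar y))$). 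Hence $\bar x=\bar y$, and \eqref{unifconteq} follows with $C_\d$ as claimed.
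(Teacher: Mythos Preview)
Your approach is the same Ishii--Lions strategy as the paper's, with one difference in the localization: you penalize with $\eta(|x|^2+|y|^2)$, while the paper uses a bounded cutoff $\psi_R(z)=\psi(\sqrt{|z|^2+1}/R)$ satisfying $\psi(s)=2\|w_\d\|_\infty+1$ for $s\ge 1$ and $\psi'\ge 0$. Both devices are legitimate, and the core elliptic-gain computation (the radial eigenvalue $\alpha(\alpha-1)|\bar z|^{\alpha-2}$ beating all error terms once $L|\bar z|^\alpha\le 2\|w_\d\|_\infty$) is the same.

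There is, however, a point in your write-up that as stated is a gap. You say $L$ is chosen large \emph{depending on $\eta$}, and then you let $\eta\to 0$. If $L=L(\eta)$ were really necessary, the conclusion $w_\d(x)-w_\d(y)\le L|x-y|^\alpha$ after $\eta\to 0$ would not give a finite $C_\d$. You must check that $L$ can be taken independent of $\eta$ (for, say, $\eta\le 1$). This is in fact true: the only places where the localization interacts with the unbounded drift are the terms $b(\bar x)\cdot 2\eta\bar x$ and $b(\bar y)\cdot(-2\eta\bar y)$. By (U), for $|\bar x|>R_1$ one has $b(\bar x)\cdot 2\eta\bar x=2\eta b\cdot\bar x-2\eta|\bar x|^2$; the second term has the good sign, and the first is $O(\eta|\bar x|)=O(\sqrt{\eta\,\|w_\d\|_\infty})$ by your bound $\eta|\bar x|^2\le 2\|w_\d\|_\infty$. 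All other $\eta$-contributions (from the quadratic gradient term, the correlation term, and the second-order matrix inequality) are similarly $O(\sqrt{\eta})$ or $O(\eta)$. Hence the constant you call $C_{\d,\eta}$ is actually bounded uniformly in $\eta\in(0,1]$, and $L$ can be fixed first, independently of $\eta$. You should make this explicit.

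The paper's cutoff $\psi_R$ avoids this bookkeeping issue altogether: since $|r_x|,|r_y|,\|D^2\psi_R\|_\infty=o_R(1)$ and, crucially, $x\cdot r_x\ge 0$ (from $\psi'\ge 0$), every penalty contribution is $o_R(1)$ \emph{before} $C_\d$ is chosen. One then reaches the contradiction for $C_\d$ large with $R$ fixed but finite, and no second limit is needed. This is what your quadratic penalty costs you: the order of limits requires an extra line of justification.
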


\begin{proof}\rm{
We give the proof for $\alpha=2$ since the case $\alpha>2$ is analogous and even simpler.

Throughout the following proof we denote either by $(a,b)$ or $a \cdot b$ the scalar product for any $a,b \in \R^m$.
For convenience of notation in the following we drop the dependence on $\bar{x},\bar{p}$ by denoting the solution of \eqref{cell} by $w_\d$.

 Let $\d>0$ and $\alpha \in (0,1)$ be fixed and consider the function
\begin{equation}\label{Cdinizio}
w_\d(x)-w_\d(y)-C_\d|x-y|^\alpha,
\end{equation}
for some constant $C_\d> 0$ large enough. Note that $C_\d$  will be choosen suitably at the end of  the proof and  will depend on $\d,\alpha, ||\tau||_\infty, ||\sigma(\bar{x},\cdot)||_\infty, \bar{p}$, the Lipschitz constants of $\tau,b,\sigma$ and $\theta$ of \eqref{unifnondeg}. For clearness of exposition, we keep track only of the dependence on $\d$. 

We suppose that
$$
\sup\{w_\d(x)-w_\d(y)-C_\d|x-y|^\alpha \}=M>0.
$$
Let $R>0$ and consider the function
\begin{equation}\label{Phidev}
\Phi(x,y)=w_\d(x)-w_\d(y)-C_\d|x-y|^\alpha -\psi_R(x)-\psi_R(y),
\end{equation}
where 
\begin{equation}\label{psiR}
\psi_R(z)=\psi\left(\frac{\sqrt{|z|^2+1}}{R}\right)
\end{equation}
and $\psi\in C^2([0,+\infty))$ satisfies
 \begin{equation}\label{psir}
\left\{
 \begin{array}{ll}
\psi(s)=2||w_\d||_\infty+1 \quad &\mbox{if } s\geq 1\\
\psi(0)=0, \, \, \psi\geq 0, \, \, \psi'\geq 0,
                 \end{array}
\right.\,
\end{equation}
where we note that $||w_\d||_\infty$  depends on $\d$ as in \eqref{boundd}.
We claim  that
\begin{equation}\label{MRM}
M_R=\sup \Phi(x,y)\to M \mbox{ as } R \to + \infty.
\end{equation}
In fact
$$
M_R\leq M \quad \,\mbox{ for any } R>0.
$$
On the other hand
$$
M_R\geq w_\d(x)-w_\d(y)-C_\d|x-y|^\alpha -\psi_R(x) -\psi_R(y)	\, \mbox{ for all } x,y \in \mathbb{R}^m, R>0,
$$
then
$$
\lim_{R \to + \infty} M_R\geq w_\d(x)-w_\d(y)-C_\d|x-y|^\alpha \,\mbox{ for all } x,y \in \mathbb{R}^m
$$
and we conclude
$$
\lim_{R \to + \infty} M_R\geq \sup\{ w_\d(x)-w_\d(y)-C_\d|x-y|^\alpha\}=M.
$$
Then we can suppose for $R$ large enough 
\begin{equation}\label{MRpos}
M_R\geq \frac{M}{2}>0.
\end{equation}
We observe that if $\sqrt{|x|^2+1}\geq R$
$$
\Phi(x,y)\leq -1<0
$$
and the same holds when $\sqrt{|y|^2+1}\geq R$. 
Then, there exists $(x_R,y_R)$ point of maximum of $\Phi$ such that 
\begin{equation}\label{MRdef}
M_R=w_\d(x_R)-w_\d(y_R)-C_\d|x_R-y_R|^\alpha -\psi_R(x_R)-\psi_R(y_R).
\end{equation}
Note that $(x_R,y_R)$ depends also on $\d$ and that  we omit the dependence.
Note also that
\begin{equation}\label{xypos}
|x_R-y_R|>0,
\end{equation}
otherwise by \eqref{MRdef} we have
$$
M_R=-\psi_R(x_R)-\psi_R(y_R)
$$
and we get a contradiction by \eqref{MRpos}  and the definition of $\psi_R$. 

By \eqref{MRM}, \eqref{MRpos} and the definition of $\psi_R$, we also have
$$
C_\d|x_R-y_R|^\alpha\leq 2||w_\d||_\infty:=A_\d.
$$
Then 
\begin{equation}\label{xmenoy}
|x_R-y_R|\leq \left(\frac{A_\d}{C_\d}\right)^{\frac{1}{\alpha}}.
\end{equation}

From now on we omit the dependence on $R$ and we write
$$
(x_R,y_R)=(x,y).
$$
The main result is the following lemma.
\begin{lem}\label{eqprimalimrlemh}
Under the above notations and assumption (U), there exist  positive constants $K, K_1, K_2, K_3,K_4$ such that
\begin{multline*}
0	\leq KC_\d \alpha (\alpha-1)|x-y|^{\alpha-2}+KC_\d\alpha |x-y|^{\alpha+1} + K_1C_\d\alpha|x-y|^\alpha+ K_2\alpha C_\d^2|x-y|^{2\alpha-1}\\+K_2o_R(1)C_\d\alpha|x-y|^{\alpha-1}+ K_3\alpha C_\d|x-y|^{\alpha-1}+K_4|x-y| +o_R(1).
\end{multline*}
where by $o_R(1)$ we mean that $o_R(1)\to 0$ as $R \to + \infty$. Moreover $K, K_1, K_2, K_3, K_4$ depends only on $\bar{p}, ||\sigma||_\infty, ||\tau||_\infty$, the Lipschitz constants of $\tau, b, \sigma$ and $\theta$ of \eqref{unifnondeg}.
\end{lem}
\begin{proof}
Let 
\begin{equation}\label{rx1d}
r_x=D_x\psi_R=2R^{-1}\psi'\left(\frac{\sqrt{|x|^2+1}}{R}\right)x(\sqrt{|x|^2+1})^{-1}
\end{equation}
and 
\begin{equation}\label{ry1d}
 r_y=D_y\psi_R=2R^{-1}\psi'\left(\frac{\sqrt{|y|^2+1}}{R}\right)y(\sqrt{|y|^2+1})^{-1},
 \end{equation}
then  for each $\d$ fixed
\begin{equation}\label{opiccoli0d}
|r_x|, |r_y| \leq o_{R}(1), \quad ||D^2\psi_R||_\infty\leq o_{R}(1),
\end{equation}
where $o_R(1)$ means that $\lim_{R\to +\infty} o_R(1)=0$.

We remark that in the rest of the proof we denote by $o_R(1)$  any function such that $o_R(1) \to 0$ as $R\to + \infty$.
We also denote
\begin{equation}\label{s1}
s=C_\d\alpha|x-y|^{\alpha-2}(x-y).
\end{equation}
Note that the function in \eqref{Phidev} is smooth near $(x,y)$ by \eqref{xypos}. Then, since $w_\d$ is a viscosity solution of \eqref{cell} and since $(x,y)$ is a maximum point of the function in \eqref{Phidev}, we have
\begin{multline}\label{eqwd2h}
0\leq \tr(\tau(x)\tau(x)^TD^2w_\d(x))-\tr(\tau(y)\tau(y)^TD^2w_\d(y))+L(x,y)+G(x,y)+E(x,y) \\+F(x,y)+D(x,y)+ o_R(1),
\end{multline}
where we used \eqref{opiccoli0d}  to estimate the $\psi_R$-terms and we denoted
$$
D(x,y)=\d w_\d(y)-\d w_\d(x);
$$
$$
L(x,y)=s\cdot(b(x)-b(y)) +b(y)\cdot r_y+b(x)\cdot r_x;
$$
$$
G(x,y)=|\tau(x)^T(s-r_x)|^2-|\tau(y)^T(s+r_y)|^2;
$$
$$
E(x,y)=2\tau(x)\sigma(\bar{x},x)^T\bar{p}\cdot(s-r_x)-2\tau(y)\sigma(\bar{x},y)^T\bar{p}\cdot (s+r_y);
$$
$$
F(x,y)=|\sigma^T(\bar{x},x)\bar{p}|^2-|\sigma^T(\bar{x},y)\bar{p}|^2.
$$

First we estimate the second order terms in \eqref{eqwd2h}, by proving the following lemma. 

\begin{lem}\label{dersec0lem}
Under the above notations, we have
\begin{multline}\label{dersec0}
\tr(\tau(x)\tau(x)^TD^2w_\d(x))-\tr(\tau(y)\tau(y)^TD^2w_\d(y))\leq KC_\d \alpha (\alpha-1)|x-y|^{\alpha-2}\\+KC_\d\alpha |x-y|^{\alpha+1} +o_R(1),
\end{multline}
where $K$ is a positive constant (depending on $\theta$ of \eqref{unifnondeg} and on the Lipschitz constant of $\tau$) and by $o_R(1)$ we mean that $\lim_{R\to + \infty}o_R(1)=0$.
\end{lem}
\begin{proof}
We observe that, for any orthonormal basis $e_i, \, i=1,\cdots m$ of $\R^m$, we can write
\begin{equation}\label{traccia0}
\tr(\tau(x)\tau(x)^TD^2w_\d(x))=\sum_{i=1}^m (\tau(x)\tau(x)^TD^2w_\d(x)e_i,e_i)=\sum_{i=1}^m(D^2w_\d(x)\tau(x) e_i,\tau(x) e_i).
\end{equation}
Denote $\phi(t)=C_\d t^\alpha, f(z)=|z|$. 
By the maximum point property and  the second term of \eqref{opiccoli0d}, we get
\begin{multline}\label{maxpopr}
(D^2w_\d(x)p,p)-(D^2w_\d(y)q,q)\leq \phi'(f(x-y))(D^2f(x-y)(p-q),(p-q))\\+\phi^{''}(f(x-y))(Df(x-y),p-q)^2+o_R(1)
\end{multline}
for any $p,q \in \R^m$.\\
Next we remark that $|Df|^2=1$ and therefore, by differentiating this identity, we have $D^2f Df=0$.
By \eqref{unifnondeg}, we can set
$$
e_1=\frac{\tau(x)^{-1}Df(x-y)}{|\tau(x)^{-1}Df(x-y)|}, \quad \tilde{e}_1=-\frac{\tau(y)^{-1}Df(x-y)}{|\tau(y)^{-1}Df(x-y)|}.
$$
 If $e_1, \tilde{e}_1$ are collinear, the we complete the basis with orthogonal unit vectors $e_i=\tilde{e}_i\in e_1^{\top}, \, 2\leq i\leq m$. Otherwise, in the plane $\mbox{span}\{e_1, \tilde{e}_1\}$, we consider a rotation $\mathcal{R}$ of  angle $\frac{\pi}{2}$ and we define
 $$
 e_2=\mathcal{R}e_1, \quad \tilde{e}_2=-\mathcal{R}\tilde{e}_1.
 $$
Since $\mbox{span}\{e_1, e_2\}^\top=\mbox{span}\{\tilde{e}_1, \tilde{e}_2\}^\top$, we can complete the orthonormal basis with unit vectors $e_i=\tilde{e}_i \in \mbox{span}\{e_1, e_2\}^\top, \, 3\leq i \leq m$.

By \eqref{unifnondeg}, we have
$$
\theta \leq \frac{1}{|\tau(x)^{-1}Df(x-y)|^2}\leq ||\tau||_\infty^2.
$$

 Define 
$$
r_1=\tau(x) e_1 \quad t_1=\tau(y) \tilde{e}_1.
$$
Since $|Df|=1$ and $D^2f Df=0$ and by choosing $p=r_1, q=r_1$ in \eqref{maxpopr}, we get
\begin{multline*}
(D^2w_\d(x)r_1,r_1)-(D^2w_\d(y)t_1,t_1)\leq \phi''(f(x-y))(Df(x-y),r_1-t_1)^2+o_R(1)\\=C_\d\alpha(\alpha-1)|x-y|^{\alpha-2}(Df(x-y),r_1-t_1)^2+ o_R(1).
\end{multline*}
Notice that 
\begin{equation}\label{negeff}
\alpha(\alpha-1)<0.
\end{equation}
By \eqref{unifnondeg},  we have 
$$
(Df(x-y),r_1-t_1)^2=\left(\frac{1}{|\tau(x)^{-1}Df(x-y)|^2}+\frac{1}{|\tau(y)^{-1}Df(x-y)|}\right)^2\geq 4\theta.
$$ 
Then
\begin{equation}\label{dersecphi}
(D^2w_\d(x)r_1,r_1)-(D^2w_\d(y)t_1,t_1)\leq 4\theta C_\d\alpha(\alpha-1)|x-y|^{\alpha-2}+ o_R(1).
\end{equation}
Therefore  in the right hand side we have a very negative term by a double effect, first because we will choose $C_\d$ large but also because, by doing so, $|x-y|$ becomes smaller and smaller and $|x-y|^{\alpha-2}$ larger and larger.

Now we choose in \eqref{maxpopr} for all $i\in \{1,\cdots, m-1\}$
$$
p=\tau(x) e_i \quad q=\tau(y) \tilde{e}_i.
$$
Since $\tau$ is Lipschitz, we get 
$$
(D^2w_\d(x)\tau(x) e_i,\tau(x) e_i)-(D^2w_\d(y)\tau(y) \tilde{e}_i,\tau(y) \tilde{e}_i)\leq KC_\d\alpha |x-y|^{\alpha+1} +o_R(1),
$$
where $K$ depends on the Lipschits constant of $\tau$.
Then, by summing the previous equation on $i$  and adding \eqref{dersecphi}, we get
\begin{multline*}
 \sum_{i=1}^m(D^2w_\d(x)\tau(x) e_i,\tau(x) e_i) - \sum_{i=1}^m(D^2w_\d(y)\tau(y) \tilde{e}_i,\tau \tilde{e}_i)\leq KC_\d \alpha (\alpha-1)|x-y|^{\alpha-2}\\+KC_\d\alpha |x-y|^{\alpha+1}+o_R(1),
\end{multline*}
 when by $K$ we denote a constant depending on the Lipschitz constant of $\tau$ and on $\theta$. Then, by  \eqref{traccia0} with $e_i$ defined as above (and $\tilde{e}_i$ for $\tr(\tau(y)\tau(y)^TD^2w_\d(y)))$,
 we finally get  \eqref{dersec0}. 
\end{proof}
Then \eqref{eqwd2h} becomes
\begin{multline}\label{eqwd2hhh}
0	\leq KC_\d \alpha (\alpha-1)|x-y|^{\alpha-2}+KC_\d\alpha |x-y|^{\alpha+1} +L(x,y)+G(x,y)\\+E(x,y) +F(x,y)+D(x,y)+ o_R(1),
\end{multline}
Finally we estimate the left terms $D, L, G, E, F$ in \eqref{eqwd2hhh}.
First note that 
$$
D(x,y)=\d w_\d(y)-\d w_\d(x)\leq 0;
$$
First note that, by  \eqref{s1} and since $b$ is Lipschitz, we have
$$
L(x,y)\leq K_1C_\d\alpha|x-y|^\alpha +b(y)\cdot r_y+b(x)\cdot r_x.
$$
where $K_1$ depends on the Lipschitz constant of $b$.
Note that
$$
b(y)\cdot r_y+b(x)\cdot r_x\leq o_R(1).
$$
Indeed, the previous inequality holds from  the second of \eqref{opiccoli0d} when $x,y$ are uniformly bounded in $R$. Now suppose $|x|\to + \infty$ as $R\to + \infty$ (the argument being similar if $|y|\to + \infty$). By   assumption (U) we have
$$
b(x)\cdot r_x=(b-x)\cdot r_x
$$
and by \eqref{rx1d}, we have
$$
x\cdot r_x=2R^{-1}|x|^2\psi'\left(\frac{\sqrt{|x|^2+1}}{R}\right)(\sqrt{|x|^2+1})^{-1}
$$
and since $\psi'\geq0$ by definition of $\psi_R$ we have
\begin{equation}\label{posrxryd}
x\cdot r_x \geq 0.
\end{equation}
Then by \eqref{posrxryd} and \eqref{opiccoli0d}, we get
$$
(b-x)\cdot r_x\leq o_R(1).
$$
Then  
$$
L(x,y)\leq K_1C_\d\alpha|x-y|^\alpha	+o_R(1).
$$

Now we estimate the $G$-term. By the first of \eqref{opiccoli0d}, \eqref{s1} and since $\tau$ is bounded, we have
$$
G(x,y)\leq |\tau^T(x)s|^2-|\tau^T(y)s|^2+K_2o_R(1)C_\d\alpha|x-y|^{\alpha-1}+o_R(1),
$$
where $K_2$ depends on $||\tau||_\infty$. Note that from now on we denote by $K_2$ a constant depending on $||\tau||_\infty$ and the Lipschitz constant of $\tau$ and which may change from line to line.
Since $\tau$ is bounded by \eqref{s1}, we have
$$
|\tau^T(x)s|+|\tau^T(y)s|\leq K_2C_\d \alpha |x-y|^{\alpha-1}
$$
and since $\tau$ is Lipschitz and by \eqref{s1},  we have
$$
|\tau^T(x)s|-|\tau^T(y)s|\leq K_2C_\d\alpha|x-y|^{\alpha}.
$$
Then we get
$$
|\tau^T(x)s|^2-|\tau^T(y)s|^2\leq K_2\alpha C_\d^2\alpha|x-y|^{2\alpha-1},
$$
and we conclude
\begin{equation}\label{g1}
G(x,y)\leq K_2\alpha C_\d^2|x-y|^{2\alpha-1}+K_2o_R(1)C_\d\alpha|x-y|^{\alpha-1}+o_{R}(1).
\end{equation}
Next we estimate $E$ using the boundedness of $\sigma$ and we get
$$
E(x,y)\leq K_3\alpha C_\d|x-y|^{\alpha-1}+o_{R}(1),
$$
where  $K_3>0$ depends on $\bar{p}, ||\tau||_\infty, ||\sigma||_\infty$. 

Finally, by the Lipschitz continuity and boundedness of $\sigma$, we have
$$
F(x,y) \leq K_4|x-y|,
$$
where
$
K_4$ depends on $||\sigma||_\infty$ and the Lipschitz constant of $\sigma$ and on $\bar{p}$.

 Then, by all the previous estimates, \eqref{eqwd2hhh} becomes
 \begin{multline}\label{eqwd2hhhh}
0	\leq KC_\d \alpha (\alpha-1)|x-y|^{\alpha-2}+KC_\d\alpha |x-y|^{\alpha+1}+K_1C_\d\alpha|x-y|^\alpha+ K_2\alpha C_\d^2|x-y|^{2\alpha-1}\\+K_2o_R(1)C_\d\alpha|x-y|^{\alpha-1}+ K_3\alpha C_\d|x-y|^{\alpha-1}+K_4|x-y| +o_R(1).
\end{multline}
This concludes the proof of Lemma \ref{eqprimalimrlemh}.
\end{proof}

We divide \eqref{eqwd2hhhh} by $C_\d|x-y|^{\alpha-2}$ and we get
\begin{multline}\label{ultholder}
 0\leq  K\alpha(\alpha-1)+K\alpha|x-y|^3+K_1\alpha|x-y|^2+K_2\alpha C_\d|x-y|^{\alpha+1}+K_2o_R(1)\alpha|x-y|\\+K_3\alpha|x-y|+K_4C_\d^{-1}|x-y|^{3-\alpha}+o_R(1)C_\d^{-1}|x-y|^{2-\alpha}.
\end{multline}
Note that by \eqref{xmenoy}, we have
\begin{equation}\label{contr1}
|x-y|\leq A_\d^{\frac{1}{\alpha}}C_\d^{-\frac{1}{\alpha}},
\end{equation}
then 
$$
C_\d^{-1}|x-y|^{3-\alpha}\leq A_\d^{\frac{3-\alpha}{\alpha}}C_\d^{-\frac{3}{\alpha}};
$$
$$
C_\d^{-1}|x-y|^{2-\alpha}\leq A_\d^{\frac{2-\alpha}{\alpha}}C_\d^{-\frac{2}{\alpha}};
$$
$$
C_\d|x-y|^{\alpha+1}\leq A_\d^{\frac{\alpha+1}{\alpha}}C_\d^{-\frac{1}{\alpha}}.
$$
By all the previous estimates and by taking  $R$ large enough such that $o_R(1)\leq 1$, \eqref{ultholder} becomes
\begin{multline}\label{ultholderfine}
0\leq  K\alpha(\alpha-1)+K\alpha A_\d^{\frac{3}{\alpha}}C_\d^{-\frac{3}{\alpha}}+ K_1\alpha A_\d^{\frac{2}{\alpha}}C_\d^{-\frac{2}{\alpha}} + K_2\alpha A_\d^{\frac{\alpha+1}{\alpha}}C_\d^{-\frac{1}{\alpha}}+K_2o_R(1)\alpha A_\d^{\frac{1}{\alpha}}C_\d^{-\frac{1}{\alpha}} \\ + K_3 \alpha A_\d^{\frac{1}{\alpha}}C_\d^{-\frac{1}{\alpha}} +K_4 A_\d^{\frac{3-\alpha}{\alpha}}C_\d^{-\frac{3}{\alpha}}+ o_R(1)A_\d^{\frac{2-\alpha}{\alpha}}C_\d^{-\frac{2}{\alpha}}.
\end{multline}
 
Then, the claim of the proposition follows by taking $C_\d$ in \eqref{Cdinizio}  large enough in order to get a contradiction with \eqref{ultholderfine}. For example we take $C_\d>  \bar{C}_\d$ where $\bar{C}_\d$ satisfies
\begin{multline*}
K\alpha(\alpha-1)+K\alpha A_\d^{\frac{3}{\alpha}}\bar{C}_\d^{-\frac{3}{\alpha}}+K_1\alpha A_\d^{\frac{2}{\alpha}}\bar{C}_\d^{-\frac{2}{\alpha}} + K_2\alpha A_\d^{\frac{\alpha+1}{\alpha}}\bar{C}_\d^{-\frac{1}{\alpha}}+K_2o_R(1)\alpha A_\d^{\frac{1}{\alpha}}\bar{C}_\d^{-\frac{1}{\alpha}} \\ + K_3 \alpha A_\d^{\frac{1}{\alpha}}\bar{C}_\d^{-\frac{1}{\alpha}} +K_4 A_\d^{\frac{3-\alpha}{\alpha}}\bar{C}_\d^{-\frac{3}{\alpha}}+ o_R(1)A_\d^{\frac{2-\alpha}{\alpha}}\bar{C}_\d^{-\frac{2}{\alpha}}<0.
\end{multline*}
Note that  $\bar{C}_\d$ depends  on $K_i, \, i=1,2,3, 4$ and on $\d, \alpha,K$.


}
\end{proof}

\end{step}

\begin{step}3-Proof of Proposition \ref{globunifwd}
\upshape

\begin{proof}

Note that, under the assumption (U), \eqref{cell} reads for $|y|>R_1$
\begin{equation}\label{cellapp}
\delta w_\delta +F(\bar{x},y, \bar{p},Dw_\delta, D^2w_\delta)-|\sigma(\bar{x},y)\bar{p}|^2=0,
\end{equation}
where 
$$
 F(\bar{x},y,\bar{p}, q,Y):= - \tr(\tau \tau^TY)-|\tau^T q|^2 -(b-y, q) -(2 \tau\sigma^T(\bar{x},y) \bar{p}, q).
$$ 
Note also that throughout the following proof we denote either by $(a,b)$ or $a \cdot b$ the scalar product for any $a,b \in \R^m$.

Let $\bar{R}> R_1$ be large enough (which will be chosen suitably at the end of the proof) and take $C_{\bar{R}}$ the constant of Lemma \ref{stimagradunbcrit} for $k=\bar{R}$. 
Then we have for all $x,y \in \bar{B}_{\bar{R}}$
\begin{equation}\label{stimacomplip}
|w_\delta(x; \bar{x},\bar{p})-w_\d(y; \bar{x},\bar{p})| \leq C_{\bar{R}}|x-y|.
\end{equation}
For convenience of notation in the following we drop the dependence on $\bar{x},\bar{p}$ by denoting the solution of \eqref{cellapp} by $w_\d$.

In this first part of the proof we proceed analogously as in the proof of Proposition \ref{unifcont}. The new part of the proof starts from  Lemma \ref{pointmaxout}. We give a sketch and for all the details we refer to the beginning of the proof of Proposition \ref{unifcont}.

 We proceed by contradiction and we suppose that
\begin{equation}\label{ciniziolip}
\sup\{w_\d(x)-w_\d(y)-C|x-y|\}=M>0,
\end{equation}
where $C$ is a positive constant large enough, that is $C>\max\{C_{\bar{R}}, C_{\bar{R}+1}\}$.

Let $R>0$ and consider the function
\begin{equation}\label{Phidevlip}
\Phi(x,y)=w_\d(x)-w_\d(y)-C|x-y| -\psi_R(x)-\psi_R(y),
\end{equation}
where 
\begin{equation}\label{psiRlip}
\psi_R(z)=\psi\left(\frac{\sqrt{|z|^2+1}}{R}\right)
\end{equation}
where $\psi$ is defined in \eqref{psir}.
By standard argument (see also the proof of Proposition \ref{unifcont}), we prove that 
$$
M_R=\sup \Phi(x,y)\to M \mbox{ as } R \to + \infty,
$$
then we can suppose for $R$ large enough
\begin{equation}\label{mrposlip}
M_R\geq \frac{M}{2}>0,
\end{equation}
and by definition of $\psi_R$ we get that, for $R$ large enough, there exist 
$
(x_R,y_R)
$
 such that 
\begin{equation}\label{MRlip}
M_R=w_\d(x_R)-w_\d(y_R)-C|x_R-y_R|-\psi_R(x_R)-\psi_R(y_R).
\end{equation}
Note also that
\begin{equation}\label{xyposlip}
|x_R-y_R|>0.
\end{equation}

We prove the following lemma, whose result is essential in order to use assumption (U) in the rest of the proof. 

\begin{lem}\label{pointmaxout}
Under the above notations, we have that, for $R$ large enough, there exists a point of maximum $(x_R,y_R)$ of the function $\Phi$
 such that $(x_R,y_R) \in \left(\R^m\setminus \bar{B}_{\bar{R}}\right) \times \left(\R^m \setminus \bar{B}_{\bar{R}}\right)$. Moreover
 \begin{equation}\label{liminfxy}
 \liminf_{R\to + \infty}|x_R-y_R|>0.
 \end{equation}
\end{lem}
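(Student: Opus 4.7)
The plan is to handle the two conclusions separately: part (a), that both $x_R,y_R\in\R^m\setminus\bar B_{\bar R}$ for $R$ large, and part (b), the lower bound $\liminf_R|x_R-y_R|>0$. I would address (b) first because it is needed as an input to (a).

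For (b), I would use the global H\"older estimate of Proposition \ref{unifcont} (not uniform in $\d$, but that is harmless here since $\d$ is fixed). Since $\psi_R\ge 0$, one has
\[
\tfrac{M}{2}\le M_R\le w_\d(x_R)-w_\d(y_R)-C|x_R-y_R|\le C_\d|x_R-y_R|^\alpha-C|x_R-y_R|,
\]
which forces $|x_R-y_R|\ge (M/(2C_\d))^{1/\alpha}=:c_0>0$ uniformly in $R$.

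For (a), I would argue by contradiction in two cases. First, if both $x_R,y_R\in \bar B_{\bar R}$, Lemma \ref{stimagradunbcrit} gives $w_\d(x_R)-w_\d(y_R)\le C_{\bar R}|x_R-y_R|$, and since $C>C_{\bar R}$ and $\psi_R\ge 0$ one immediately obtains $M_R\le (C_{\bar R}-C)|x_R-y_R|\le 0$, a contradiction. The asymmetric case, say $x_R\in\bar B_{\bar R}$ with $y_R$ arbitrary, is the main obstacle, since the local Lipschitz bound of Lemma \ref{stimagradunbcrit} cannot directly compare $w_\d$ at $x_R$ and the far-away point $y_R$. To bypass this, I would perform an infinitesimal sliding argument: thanks to (b), the unit vector $e:=(y_R-x_R)/|y_R-x_R|$ is well defined, and for any fixed $s\in(0,\min(1,c_0/2))$ the perturbed point $\tilde x_R:=x_R+se$ lies in $\bar B_{\bar R+1}$ and satisfies $|\tilde x_R-y_R|=|x_R-y_R|-s$. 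Lemma \ref{stimagradunbcrit} on $\bar B_{\bar R+1}$ gives $w_\d(\tilde x_R)-w_\d(x_R)\ge -C_{\bar R+1}\,s$, while \eqref{opiccoli0d} yields $|\psi_R(\tilde x_R)-\psi_R(x_R)|\le o_R(1)\,s$. Inserting these into the maximality inequality $\Phi(\tilde x_R,y_R)\le M_R=\Phi(x_R,y_R)$ reduces to
\[
(C-C_{\bar R+1})\,s\;\le\; o_R(1)\,s,
\]
which is impossible for $R$ large enough, since $C>C_{\bar R+1}$. The symmetric sub-case $y_R\in\bar B_{\bar R}$ is handled by sliding $y_R$ toward $x_R$ in exactly the same way. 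The core reason the argument works is that the strict gap $C-C_{\bar R+1}>0$, combined with the slow variation $|\nabla\psi_R|=o_R(1)$ of the penalisation, blocks any maximum from touching $\bar B_{\bar R}$ once $R$ is large.
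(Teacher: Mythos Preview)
Your proof is correct. Part (b) is essentially identical to the paper's argument: both invoke the global H\"older estimate of Proposition~\ref{unifcont} to bound $M_R\le C_\d|x_R-y_R|^\alpha$ and extract a uniform positive lower bound on $|x_R-y_R|$ (you get an explicit constant, the paper argues by contradiction, but the content is the same).

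For part (a) the two arguments diverge in execution while resting on the same ingredients (the local Lipschitz constant $C_{\bar R+1}<C$ together with $|D\psi_R|=o_R(1)$). The paper does not derive a contradiction; instead, in the asymmetric case it \emph{constructs a new maximizer}: it slides $x_R$ all the way along the segment toward $y_R$ until it hits $\partial B_{\bar R+1}$, calls this point $z_R$, and shows $\Phi(z_R,y_R)\ge M_R$, so that $(z_R,y_R)$ is also a maximum and now lies in $(\R^m\setminus\bar B_{\bar R})^2$. Your infinitesimal sliding instead proves the stronger statement that \emph{no} maximizer can touch $\bar B_{\bar R}$; it is cleaner in that it avoids the preliminary case-split ``$y_R\in\bar B_{\bar R+1}$ vs.\ $y_R\notin\bar B_{\bar R+1}$'' that the paper needs. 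A minor remark: your appeal to (b) to fix $s<c_0/2$ is convenient but not essential, since $|x_R-y_R|>0$ is already known from \eqref{xyposlip} and the final inequality $(C-C_{\bar R+1})s\le o_R(1)s$ can simply be divided by $s$.
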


\begin{proof}
Let $(x_R,y_R)$ be a point of maximum of $\Phi$ defined in \eqref{Phidevlip} (see the above arguments for the existence). If  $(x_R, y_R) \in  \left(\R^m\setminus \bar{B}_{\bar{R}}\right)\times   \left(\R^m\setminus \bar{B}_{\bar{R}}\right)$, the claim is proved. 
Otherwise, there are three possible cases (up to subsequences):
\begin{itemize}
\item[(i)]  $(x_R,y_R) \in \bar{B}_{\bar{R}}\times \bar{B}_{\bar{R}}$;
\item[(ii)]$(x_R, y_R) \in \bar{B}_{\bar{R}} \times \left(\R^m\setminus \bar{B}_{\bar{R}}\right)$; 
\item[(iii)] $(x_R, y_R) \in  \left(\R^m\setminus \bar{B}_{\bar{R}}\right)\times \bar{B}_{\bar{R}}$.
\end{itemize}
 Suppose we are in case (i). We apply  the local estimate on $\bar{B}_{\bar{R}}$ \eqref{stimacomplip} and by the choice of $C$ in \eqref{ciniziolip}, we get a contradiction with \eqref{mrposlip}.
  
Now we deal with case (ii) and we observe that case (iii) can be treated analogously.
We prove that there exists  $z_R\in \R^m\setminus \bar{B}_{\bar{R}}$ such that  $(z_R, y_R)$ is still a maximum point of the function $\Phi$. 
Note that we can suppose that    $y_R \in \mathbb{R}^m\setminus \bar{B}_{\bar{R}+1}$. Indeed, if  $y_R \in \bar{B}_{\bar{R}+1}$, we use  the local estimate on $\bar{B}_{\bar{R}+1}$  and by the choice of $C$ in \eqref{ciniziolip}, we get a contradiction with \eqref{mrposlip}.
Let $z_R, z_R'$ be respectively the points where the segment between $x_R$ and $y_R$ intersects the boundary of $B_{\bar{R}+1}$ and of $B_{\bar{R}}$. Note that 
\begin{equation}\label{segment}
|x_R-y_R|= |x_R-z_R|+|z_R-y_R|
\end{equation}
and
\begin{equation}\label{segment2}
|x_R-z_R|= |x_R-z_R'|+1.
\end{equation}
Then, by \eqref{segment}, we have
$$
\max \Phi= \Phi(x_R,y_R)\leq \Phi(z_R,y_R)+w_\d(x_R)-w_\d(z_R)-C|x_R-z_R|-\psi_R(x_R)+\psi_R(z_R),
$$
 and by the local estimate \eqref{stimacomplip} on $\bar{B}_{\bar{R}+1}$ coupled with \eqref{segment2}, we get
\begin{multline*}
\max \Phi\leq \Phi(z_R,y_R)+C_{\bar{R}+1}|x_R-z'_R|+C_{\bar{R}+1}-C|x_R-z'_R|-C-\psi_R(x_R)+\psi_R(z_R).
\end{multline*}
By the choice of $C$ in \eqref{ciniziolip} we get
$$
\max \Phi \leq C_{\bar{R}+1}-C+\Phi(z_R,y_R)-\psi_R(x_R)+\psi_R(z_R)
$$
and, by taking $R$ large enough so that
$
C_{\bar{R}+1}-C-\psi_R(x_R)+\psi_R(z_R)\leq 0,
$
we conclude
$$
\max \Phi \leq \Phi(z_R,y_R).
$$
Then, for $R$ large enough, $(z_R,y_R)\in \left(\R^m\setminus \bar{B}_{\bar{R}} \right)\times  \left(\R^m\setminus \bar{B}_{\bar{R}}\right)$ is a point of maximum of the function $\Phi$. This conclude the proof of the first claim.

Now we prove \eqref{liminfxy}. By contradiction, we suppose that 
$$
\liminf_{R\to + \infty}|x_R-y_R|=0.
$$
By  \eqref{MRlip} and the definition of $\psi_R$, we have
$$
M_R\leq w_\d(x_R)-w_\d(y_R).
$$
Now we use Proposition \ref{unifcont} and by \eqref{unifconteq},
we get
$$
M_R\leq C_\d|x_R-y_R|^\alpha.
$$
Then, since $M_R\to M>0$, we get the following contradiction
$$
0<\liminf_{R \to + \infty}M_R\leq\liminf_{R \to + \infty} C_\d|x_R-y_R|^\alpha=0,
$$ 
concluding the proof.
\end{proof}

From now on we omit the dependence on $R$ and we write
$$
(x_R,y_R)=(x,y).
$$
We prove the following lemma.
\vspace{0.2cm}

\begin{lem}\label{eqprimalimrlem}
Under the above notations and assumptions, there exists two positive constants $K_1, K_2$ such that
\begin{equation}\label{eqprimalimrdue}
 C|x-y|\leq CK_1g(x,y)|x-y|+K_2|x-y|+o_R(1),
\end{equation}
where $g\, :\, \mathbb{R}^m\times \mathbb{R}^m\rightarrow \mathbb{R}^{+}$ is  such that $\forall \eps>0$ there exists $R_\eps$ such that $g(x,y)\leq \eps$ for all $|x|,|y|\geq R_\eps$. Moreover $K_1, K_2$ depends only on $\bar{p}, ||\sigma||_\infty, ||\tau||_\infty$ and by $o_R(1)$ we mean that $\lim_{R\to+\infty}o_R(1)=0$.
\end{lem}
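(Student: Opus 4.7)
The plan is to read off information from the PDE \eqref{cellapp} at the maximizer $(x,y) = (x_R,y_R)$, which by Lemma \ref{pointmaxout} lies in $(\R^m\setminus\bar{B}_{\bar{R}})\times(\R^m\setminus\bar{B}_{\bar{R}})$ with $\liminf_R|x-y|>0$. On this set assumption (U) gives $b(\cdot)=b-\cdot$ and $\tau(\cdot)\equiv\tau$ constant, and the test function $C|x-y|+\psi_R(x)+\psi_R(y)$ is smooth near $(x,y)$ so classical differentiation of $w_\d\in C^2$ applies. The first-order conditions yield
\begin{equation*}
Dw_\d(x)=s+r_x,\qquad Dw_\d(y)=s-r_y,\qquad s:=C\frac{x-y}{|x-y|},
\end{equation*}
with $|s|=C$ fixed and $|r_x|,|r_y|=o_R(1)$ by \eqref{opiccoli0d}. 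From the second-order inequality $D^2\Phi(x,y)\le 0$, testing on equal coordinates $(\xi,\xi)$ cancels the mixed $D^2|x-y|$ contributions and, $\tau$ being constant, gives
\begin{equation*}
\tr(\tau\tau^TD^2w_\d(x))-\tr(\tau\tau^TD^2w_\d(y))\leq o_R(1).
\end{equation*}

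Next I subtract \eqref{cellapp} at $y$ from \eqref{cellapp} at $x$ and estimate the five resulting terms. The second-order difference is $o_R(1)$ by the above. The quadratic gradient term $|\tau^T(s+r_x)|^2-|\tau^T(s-r_y)|^2$ is $o_R(1)$ because $\tau$ is constant and $|s|=C$ is a fixed constant. The source term $|\sigma^T(\bar x,x)\bar p|^2-|\sigma^T(\bar x,y)\bar p|^2$ is bounded by $K_2|x-y|$ via Lipschitz continuity and boundedness of $\sigma$. The coercive drift term, using (U), becomes
\begin{equation*}
(b-x)\cdot Dw_\d(x)-(b-y)\cdot Dw_\d(y)=b\cdot(r_x+r_y)-(x-y)\cdot s-x\cdot r_x-y\cdot r_y\leq -C|x-y|+o_R(1),
\end{equation*}
where $x\cdot r_x,y\cdot r_y\geq 0$ by \eqref{posrxryd}. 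Finally, splitting the correlation term as $2\tau(\sigma^T(\bar x,x)-\sigma^T(\bar x,y))\bar p\cdot s+o_R(1)$ and invoking (S2) produces a bound $CK_1 g(x,y)|x-y|+o_R(1)$ with $K_1=2\|\tau\|_\infty|\bar p|$.

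Combining the five estimates with $\d(w_\d(x)-w_\d(y))\geq 0$ (which holds since $w_\d(x)>w_\d(y)$ at the maximum) yields
\begin{equation*}
C|x-y|\leq CK_1g(x,y)|x-y|+K_2|x-y|+o_R(1),
\end{equation*}
as required. The main obstacle, and the reason (S2) is imposed, is the correlation term: it is the only piece in which the large constant $C$ interacts multiplicatively with $\sigma(\bar x,x)-\sigma(\bar x,y)$, and only the decay of $g$ at infinity will allow absorbing $CK_1 g(x,y)|x-y|$ into $C|x-y|$ in the subsequent step (where $R\to\infty$ forces $|x|,|y|\to\infty$). The linear $-y$ drift in (U) is equally essential, as it is the sole source of the competing $-C|x-y|$ that makes the inequality nontrivial.
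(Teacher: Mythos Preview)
Your proof is correct and follows essentially the same approach as the paper: you exploit the first-order maximum conditions to identify $Dw_\d(x)=s+r_x$, $Dw_\d(y)=s-r_y$, use the constancy of $\tau$ (from (U)) to kill both the second-order difference and the quadratic-gradient difference up to $o_R(1)$, extract the coercive $-C|x-y|$ from the Ornstein--Uhlenbeck drift via $(x-y)\cdot s=C|x-y|$ together with $x\cdot r_x,\,y\cdot r_y\ge 0$, and control the correlation term through (S2). The decomposition into the five terms $D,L,G,E,F$ and their individual estimates match the paper's proof line by line; your closing remarks on why (S2) and the linear drift in (U) are indispensable also mirror the paper's commentary.
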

\begin{oss}\rm{
Note that $C|x-y|$, on the left side in \eqref{eqprimalimrdue},  remains striclty positive for $R\to + \infty$ (by Lemma \eqref{pointmaxout}). This term stems from the Ornstein-Uhlenbeck term $-(b-y) \cdot Dw_\d$ in the ergodic problem \eqref{cellapp}.
}
\end{oss}
\begin{proof}
We denote 
\begin{equation}\label{rxlip}
r_x:=D\psi_R(x)=R^{-1}\psi'\left(\frac{\sqrt{|x|^2+1}}{R}\right)x(\sqrt{|x|^2+1})^{-1}
\end{equation}
\begin{equation}\label{rylip}
 r_y:=D\psi_R(y)=R^{-1}\psi'\left(\frac{\sqrt{|y|^2+1}}{R}\right)y(\sqrt{|y|^2+1})^{-1}.
\end{equation}
We remark that
\begin{equation}\label{opiccoli1lip}
|r_x|, |r_y| \leq R^{-1}||\psi'||_\infty,
\end{equation}
where $||\psi'||_\infty$ depends on $\d$. Similarly we argue for the second derivatives of $\psi_R$ and we get
\begin{equation}\label{opiccoli2lip}
 ||D^2 \psi_R(z)||_\infty \leq o_{R}(1),
\end{equation}
where $o_R(1)$ means that $\lim_{R \to + \infty} o_R(1)=0$.

 Note that in the rest of the proof we denote by $o_R(1)$  any function respectively such that $o_R(1)\to 0$ as $R \to + \infty$. 
We also denote
\begin{equation}\label{slip}
s=C\frac{x-y}{|x-y|}.
\end{equation}
Notice that the function in \eqref{Phidevlip} is smooth since  for $R$ big enough $x\neq y$ by Lemma \ref{pointmaxout}. Then, since $w_\d$ is a viscosity solution of \eqref{cellapp} and since $(x,y)$ is a maximum point of the function in \eqref{Phidevlip}, we have
\begin{multline}\label{eqwd00}
L(x,y)\leq \tr(\tau\tau^TD^2w_\d(x))-\tr(\tau\tau^TD^2w_\d(y))+o_R(1)+G(x,y)\\+E(x,y) +F(x,y)+D(x,y),
\end{multline}
where  we used  \eqref{opiccoli1lip} and \eqref{opiccoli2lip}  to estimate the $\psi_R$-terms and
where  we denote
$$
D(x,y)=\d w_\d(y)-\d w_\d(x);
$$
$$
L(x,y)=(s,(x-y)) -(b-y,r_y)-(b-x,r_x);
$$
$$
G(x,y)=|\tau^T(s+r_x)|^2-|\tau^T(s-r_y)|^2;
$$
$$
E(x,y)=(2\tau\sigma(\bar{x},x)^T\bar{p},s+r_x)-(2\tau\sigma(\bar{x},y)^T\bar{p},s-r_y);
$$
$$
F(x,y)=|\sigma^T(\bar{x},x)\bar{p}|^2-|\sigma^T(\bar{x},y)\bar{p}|^2.
$$

We estimate each term in \eqref{eqwd00}.
The most important terms is $L$ since it gives rise to the left order term $C|x-y|$ in \eqref{eqprimalimrdue}. Indeed by \eqref{slip}, we have
$$
L(x,y)\geq C|x-y| -(\mu-y)\cdot r_y-(\mu-x)\cdot r_x
$$
and notice that  by  \eqref{rxlip} and  \eqref{rylip} we have
$$
x\cdot r_x=R^{-1}|x|^2\psi'\left(\frac{\sqrt{|x|^2+1}}{R}\right)(\sqrt{|x|^2+1})^{-1}
$$
and
$$
y\cdot r_y=R^{-1}|y|^2\psi'\left(\frac{\sqrt{|y|^2+1}}{R}\right)y(\sqrt{|y|^2+1})^{-1} 
$$
and since $\psi'\geq0$ by definition of $\psi_R$, we have
\begin{equation}\label{posrxry1lip}
x\cdot r_x\geq 0,\quad y\cdot r_y \geq 0.
\end{equation}
By \eqref{posrxry1lip} and \eqref{opiccoli1lip}, we get
$$
-(b-y)\cdot r_y-(b-x)\cdot r_x\geq o_R(1),
$$
and then 
$$
L(x,y)\geq C|x-y|	+o_R(1).
$$
Then by the  previous estimates we get
\begin{equation}\label{eqwd}
C|x-y|\leq \tr(\tau\tau^TD^2w_\d(x))-\tr(\tau\tau^TD^2w_\d(y))+o_R(1)+G(x,y)+E(x,y) +F(x,y)+ D(x,y).
\end{equation}
Now we estimate the remaining terms in the right-hand side of \eqref{eqwd}.
First note that 
$$
D(x,y)=\d w_\d(y)-\d w_\d(x)\leq 0.
$$
By \eqref{opiccoli1lip} and \eqref{slip}, we have
\begin{equation}\label{g1lip}
G(x,y)\leq o_{R}(1).
\end{equation}
Next, by (S) (that is (S2), for $\alpha=2$) and the boundedness of  $\sigma$, we have
$$
E(x,y)\leq CK_1g(x,y)|x-y|+o_{R}(1),
$$
where  $K_1>0$ depends on $\bar{p}, ||\tau||_\infty, ||\sigma||_\infty$.

By the Lipschitz continuity and boundedness of $\sigma$, we have
$$
F(x,y) \leq K_2|x-y|,
$$
where
$
K_2$ depends on $||\sigma||_\infty$ and the Lipschitz constant of $\sigma$ and on $\bar{p}$.

Finally we estimate the second order terms in \eqref{eqwd} as follows 
\begin{equation}\label{derseclip}
\tr(\tau\tau^TD^2w_\d(x))-\tr(\tau\tau^TD^2w_\d(y))\leq o_R(1).
\end{equation}
where by $o_R(1)$ we mean that $\lim_{R\to+\infty}o_R(1)=0$.
The proof of \eqref{derseclip} is analogous to the proof of \eqref{dersec0}, Lemma \ref{dersec0lem}, Proposition \ref{unifcont} and even simpler. Indeed, we use again the following property: if $e_i, \, i=1,\cdots m$ is an orthonormal basis of $\R^m$ and $A$ is a matrix $m\times m$, we have
$$
\tr(A)=\sum_{i=1}^{m}(Ae_i,e_i),
$$
then for any orthonormal basis $e_i, \, i=1,\cdots m$ of $\R^m$, we can write
\begin{equation}\label{traccialip}
\tr(\tau\tau^TD^2w_\d(x))=\sum_{i=1}^m (\tau\tau^TD^2w_\d(x)e_i,e_i)=\sum_{i=1}^m(D^2w_\d(x)\tau e_i,\tau e_i).
\end{equation}
Denote $ f(z)=|z|$. 
We recall that  the function in \eqref{Phidevlip}  is smooth at $(x,y)=(x_R, y_R)$ for $R$ large enough by Lemma \ref{pointmaxout}. Then, since $x,y$ is a maximum point of the function in \eqref{Phidevlip}  and by  \eqref{opiccoli2lip}, we get
\begin{equation}\label{dersecphi1lip}
(D^2w_\d(x)p,p)-(D^2w_\d(y)q, q)\leq C(D^2f(x-y)(p-q),(p-q))+o_R(1)
\end{equation}
for any $p,q \in \R^m$.
Then, in order to prove the claim, it is enough to   choose  in \eqref{dersecphi1lip} for all $ i\in \{1,\cdots m\}$
$$
p=\tau e_i, \quad q=\tau e_i.
$$
Then we get
$$
(D^2w_\d(x)\tau e_i,\tau e_i)-(D^2w_\d(y)\tau e_i,\tau e_i)\leq o_R(1) \, \,\mbox{ for all } i\in \{1,\cdots m\},
$$
 and by summing the previous equation on $i$, we get
\begin{equation*}
 \sum_{i=1}^m(D^2w_\d(x)\tau e_i,\tau e_i) - \sum_{i=1}^m(D^2w_\d(y)\tau e_i,\tau e_i) \leq o_R(1)
\end{equation*}
from which we conclude  \eqref{derseclip}. By coupling all the previous estimates, we get \eqref{eqprimalimrdue} and we conclude the proof of Lemma \ref{eqprimalimrlem}. 
\end{proof}
Now we conclude the the argument as follows.
We use  assumption (S) and by taking $\bar{R}> R_1$ large enough, we  consider $|x|, |y|$ large enough,
 such that
 \begin{equation}\label{osigmafinlip}
K_1g(x,y)\leq \frac{1}{2}.
 \end{equation}

Now we send $R\to + \infty$ in \eqref{eqprimalimrdue} and divide by $|x-y|$ thanks to Lemma \ref{pointmaxout}, and we get
\begin{equation}\label{previneqlip}
C\leq \frac{C}{2}+ K_2,
\end{equation}
Then, to get a contradiction with \eqref{previneqlip}, it is enough to take $C$ large enough such that
 \begin{equation}\label{Cfin}
 C> 2K_2.
 \end{equation}
 Note that $C, \bar{R}$ depend respectively only on $K_2, R_1$ and in particular, they are independent on $\d$. 
 
 Then the proof follows by taking  $C$  in \eqref{ciniziolip}, such that $C> \max\{C_{\bar{R}}, C_{\bar{R}+1}, 2K_2\}$, where $\bar{R}>R_1$ is such that \eqref{osigmafinlip} holds.

\end{proof}
\end{step}

\section{The comparison principle}
 In this section we provide the comparison principle for  the limit PDE 
\begin{equation}\label{eqn:effPDE21}
 v_t -\bar{H}(x,Dv)=0 \quad \mbox{in} \, \, (0,T) \times \mathbb{R}^n, 
\end{equation} 
where $\bar{H}$ is defined in Proposition \ref{thm:trucell} for $\alpha=2$ and in Proposition \ref{thm:trucellsopra} for $\alpha >2$. 

Note that the comparison principle for the limit problem is a crucial ingredient in the proof of the convergence, which we address in the following section. 

\begin{thm}
\label{thm:compbarh211} Let assumption  (U) hold. Let $u\in BUSC([0, T]\times\R^n)$ and $v\in BLSC([0,T]\times \R^n)$ be, respectively, a 
subsolution and   a 
supersolution to \eqref{eqn:effPDE21} such that $u(0,x)\leq 
  v(0,x)$ for all $x\in \R^n$. Then $u(x,t)\leq v(x,t)$ for all $x\in \R^n$ and $0\leq t\leq T$. 
\end{thm}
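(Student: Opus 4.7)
The plan is to apply the doubling-of-variables technique of viscosity solutions, adapted to the unbounded spatial domain $\R^n$ by a smooth penalization at infinity, with the key ingredient being the semi-homogeneity property (d) of Proposition \ref{prop:contbarh12unb} used to extract an upper bound on the Hamiltonian difference at the contact point. First I would reduce to $u, v \geq 0$ by adding a large constant (preserved by the equation as $\bar H$ depends only on $Dv$). Arguing by contradiction, suppose $\sigma_0 := \sup_{[0,T]\times \R^n}(u - v) > 0$ and fix $\mu \in (0, 1)$ close enough to $1$; note that $\mu v$ is a viscosity supersolution of $w_t - \mu \bar H(x, Dw/\mu) = 0$.

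For small parameters $\eps, \alpha, \eta > 0$, introduce
\begin{equation*}
\Phi(t, x, s, y) := u(t,x) - \mu v(s, y) - \frac{|x-y|^2 + (t-s)^2}{2\eps} - \alpha[\chi(x) + \chi(y)] - \eta(t+s),
\end{equation*}
with $\chi(z) := \sqrt{1 + |z|^2}$ coercive, smooth, and $\|D\chi\|_\infty \leq 1$. Since $u, v$ are bounded and $\chi \to +\infty$, the supremum is attained at some $(\hat t, \hat x, \hat s, \hat y)$, and one checks $\sup\Phi \geq \sigma_0/2 > 0$ for $\alpha, \eta$ small. Standard penalization estimates yield $|\hat x - \hat y|^2/\eps \to 0$ and $|\hat x - \hat y|, |\hat t - \hat s| \to 0$ as $\eps \to 0$. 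The initial condition $u(0, \cdot) \leq v(0, \cdot)$ rules out $\hat t = 0$ or $\hat s = 0$ in the limit for $\mu$ close to $1$ (otherwise one would derive the contradiction $\sigma_0/2 \leq (1-\mu)\|v\|_\infty$). Writing $p := (\hat x - \hat y)/\eps + \alpha D\chi(\hat x)$ and $q := (\hat x - \hat y)/\eps - \alpha D\chi(\hat y)$, the viscosity sub-solution inequality for $u$ and the (rescaled) super-solution inequality for $\mu v$ yield, by subtraction,
\begin{equation*}
2\eta \leq \bar H(\hat x, p) - \mu \bar H(\hat y, q/\mu).
\end{equation*}

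The decisive step is to apply property (d) of Proposition \ref{prop:contbarh12unb} with the variables $(x, z, p, q)$ in that statement replaced by $(\hat y, \hat x, q, p)$; since $\mu \in (0, 1)$, multiplying by $-1$ flips this into the upper bound
\begin{equation*}
\bar H(\hat x, p) - \mu \bar H(\hat y, q/\mu) \leq \frac{1}{1-\mu}\sup_{y' \in \R^m}|\sigma^T(\hat x, y') p - \sigma^T(\hat y, y') q|^2.
\end{equation*}
Using Lipschitz continuity of $\sigma$ in $x$, boundedness of $\sigma$, and $\|D\chi\|_\infty \leq 1$, one has $|\sigma^T(\hat x, y')p - \sigma^T(\hat y, y')q| \leq L|\hat x - \hat y||p| + 2\alpha\|\sigma\|_\infty$, while $|\hat x - \hat y| |p| \leq |\hat x - \hat y|^2/\eps + \alpha|\hat x - \hat y|$. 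Sending $\eps \to 0$ with $\alpha, \mu, \eta$ fixed kills the first contribution, leaving $2\eta \leq C\alpha^2/(1-\mu)$; then $\alpha \to 0$ gives the contradiction $2\eta \leq 0$.

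The main obstacle is precisely the use of (d): as stated it is a \emph{lower} bound on $\mu \bar H(x, p/\mu) - \bar H(z, q)$, so to turn it into the \emph{upper} bound on $\bar H(\hat x, p) - \mu \bar H(\hat y, q/\mu)$ that is needed here one must pair $u$ with $\mu v$ (rather than the more common $\mu u$ with $v$) and swap the roles of the two spatial points when invoking (d); the non-positive factor $1/(\mu-1)$ then delivers the correct direction. Everything else — attainment of the supremum, vanishing of the penalization terms, and the treatment of the initial time slice — is routine given the boundedness of $u, v$ and the coercivity of $\chi$.
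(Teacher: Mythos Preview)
Your argument is correct and matches the approach indicated by the paper: the proof there is deferred to \cite{BCG}, Theorem~3.5, with the only comment being that it rests on properties (a)--(d) of Proposition~\ref{prop:contbarh12unb}, and your doubling-of-variables proof does exactly this, with property~(d) supplying the decisive estimate at the contact point. In particular, your observation that (d), as written, forces one to pair $u$ with $\mu v$ (rather than the more common $\mu u$ with $v$) and to swap the roles of the spatial points is precisely right; one minor point you gloss over is the exclusion of $\hat t = T$ (or $\hat s = T$), which is handled either by the standard penalization $\eta/(T-t)$ in place of $\eta t$, or by the usual extension of the sub/supersolution inequalities to the terminal time.
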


\begin{proof}
The proof is exactly the same to \cite{BCG}  Theorem $3.5$, in particular is based on  the  properties $(a)$, $(b)$, $(c)$, $(d)$ of Proposition \ref{prop:contbarh12unb} satisfied by the effective Hamiltonian $\bar{H}$. 
\end{proof}


\section{The convergence result}\label{convergenceunb}
In this section we prove  the convergence  of the $v^\eps$  to the unique solution of the limit problem
\eqref{eqn:effprob11}. Throughout this section, let assumptions (U) and (S) hold.  Let $\alpha \geq 2$. We recall that $v^\eps$ denotes  the unique bounded viscosity solution of 
\begin{equation}\label{eqn:equazioneeps112}
\begin{cases} 
 \partial_t v^\eps -H^\eps \left(x,y,D_x v^\eps, \frac{D_y v^\eps}{\eps^{\alpha-1}}, D^2_{xx} v^\eps, \frac{D^2_{yy} v^\eps}{\eps^{\alpha-1}}, \frac{D^2_{xy} v^\eps}{\eps^{\frac{\alpha-1}{2}}}\right)=0 & \, \, \mbox{in} \, \, [0,T] \times \mathbb{R}^n \times \mathbb{R}^m,\\
v^\eps(0,x,y)=h(x) & \mbox{ in }  \mathbb{R}^n\times \mathbb{R}^m. 
\end{cases}
\end{equation}
where
\begin{eqnarray*} 
H^{\eps}(x,y, p, q, X,Y, Z)&:=&   |\sigma^T p|^2+ b\cdot q+ \tr(\tau \tau^T Y) +  \eps\left(\tr(\sigma\sigma^T X) + \phi \cdot p\right) 
\\ &+& 2\eps^{\frac{\alpha}{2}-1} (\tau \sigma^T p) \cdot q   +2\eps^{\frac{1}{2}}\tr(\sigma\tau^T Z) + \eps^{\alpha-2}|\tau^T q|^2.
\end{eqnarray*}
We state and prove the convergence result. We will make use of the 
 relaxed semi-limits which we define as follows. 
The lower  semi-limit $\underline{v}$ is,
$$
\underline{v}(t,x) := \liminf_{\mbox{\footnotesize{$\epsilon \to 0$}}} \{v^{\epsilon}(t_\eps, x_\eps, y_\eps)\, |\, x_\eps \to x, \, t_\eps \to t, \, y_\eps \mbox{ bounded}\}
$$
and the upper  semi-limit $\bar{v}$ is
$$
\bar{v}(t,x) := \limsup_{\mbox{\footnotesize{$\epsilon \to 0$}}} \{v^{\epsilon}(t_\eps, x_\eps, y_\eps)\, |\, x_\eps \to x, \,t_\eps \to t, \, y_\eps\mbox{ bounded}\}.
$$
Since $h$ is bounded, the family $v^\eps$ is equibounded and we have $\bar{v}\in BUSC([0, T]\times\R^n)$ and $\underline{v}\in BLSC([0, T]\times\R^n)$. Notice that by definition, we have
\begin{equation}\label{vupvlow}
\bar{v}(x,t) \geq \underline{v}(x,t).
\end{equation}
%

\begin{thm}\label{conv1unb}
Let assumptions (U) and (S) hold. Recall the effective problem
\begin{equation}\label{eqn:effprob11}
v_{t} - \bar{H}(x, Dv)=0 \, \, \mbox{in} \, \, (0,T) \times \mathbb{R}^n \quad v(0,x)=h(x) \, \, \mbox{on} \, \, \mathbb{R}^n
\end{equation}
 where $\bar{H}$ is  defined by Proposition \ref{thm:trucell} for $\alpha=2$  and Proposition \ref{thm:trucellsopra} for $\alpha>2$.
Then
\begin{enumerate}
\item[a)] the upper limit $\bar{v}$  of $v^{\eps}$  is a subsolution of \eqref{eqn:effprob11};
\item[b)] the lower limit $\underline{v}$ is a supersolution of   \eqref{eqn:effprob11};
\item[c)] $v^\eps$ converges uniformly on the compact subsets of $[0,T) \times \mathbb{R}^n \times \mathbb{R}^m$ to the unique viscosity solution of \eqref{eqn:effprob11}.
\end{enumerate}
\end{thm}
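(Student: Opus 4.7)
The three assertions are closely linked, and I organize the proof around (c). Once (a) and (b) are established, together with the matching of initial traces $\bar v(0,\cdot)\le h\le\underline v(0,\cdot)$ (obtained by a classical barrier argument using the uniform continuity of $h$ and explicit sub- and super-solutions of \eqref{eqn:equazioneeps112} of the form $h(x)\pm\omega_h(|x-x_0|)\pm Ct^{1/2}$), the comparison principle of Theorem \ref{thm:compbarh211} forces $\bar v\le\underline v$ on $[0,T]\times\R^n$, while the reverse inequality holds by definition of the half-relaxed limits. Thus $\bar v=\underline v=:v$ is continuous and coincides with the unique viscosity solution of \eqref{eqn:effprob11}, and the equality of the two half-relaxed limits gives local uniform convergence of $v^\eps$ on $[0,T)\times\R^n\times\R^m$ by the standard Barles--Perthame argument. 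The heart of the matter is therefore (a) and (b), which I treat by an adaptation of Evans' perturbed test function method to the unbounded setting.

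\textbf{The perturbed test function for (a).} Fix $\phi\in C^2$ such that $\bar v-\phi$ has a strict local maximum at $(\bar t,\bar x)\in(0,T)\times\R^n$ and set $\bar p:=D_x\phi(\bar t,\bar x)$. Let $w=w_{\bar x,\bar p}$ be the corrector from Proposition \ref{thm:trucell} (for $\alpha=2$) or Proposition \ref{thm:trucellsopra} (for $\alpha>2$); by Proposition \ref{globunifw} it is globally Lipschitz with $|Dw|\le C_0$ depending only on $(\bar x,\bar p)$ and the coefficients. Let $\chi(y)=a|y|^2$ be the Liapounov function built in Section \ref{liapfun}, so that $\mathcal G_{\bar x,\bar p}[\chi](y)\to+\infty$ as $|y|\to\infty$. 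For $\delta\in(0,1)$ introduce
\[
\phi^\eps(t,x,y):=\phi(t,x)+\eps^{\alpha-1}\bigl(w(y)+\delta\chi(y)\bigr).
\]
Since $\eps^{\alpha-1}\delta\chi(y)\to+\infty$ as $|y|\to\infty$ and $\|v^\eps\|_\infty\le\|h\|_\infty$, after the usual localization of $\phi$ in $(t,x)$ the function $v^\eps-\phi^\eps$ attains its maximum at some $(t_\eps,x_\eps,y_\eps)$. The scaling is chosen precisely so that $D_y\phi^\eps/\eps^{\alpha-1}=Dw+\delta D\chi$, $D^2_{yy}\phi^\eps/\eps^{\alpha-1}=D^2w+\delta D^2\chi$ and $D^2_{xy}\phi^\eps=0$; substituting $\phi^\eps$ into the viscosity subsolution inequality for $v^\eps$, using the ergodic equation to collapse the $w$-block into $\bar H(\bar x,\bar p)$, and using the identity $\mathcal G_{\bar x,\bar p}[\chi]=-(b+2\tau\sigma^T\bar p)\cdot D\chi-|\tau^T D\chi|^2-\tr(\tau\tau^T D^2\chi)$ to rewrite the $\delta$-remainder, one arrives at
\[
\phi_t(t_\eps,x_\eps)-\bar H(\bar x,\bar p)\le -\delta\,\mathcal G_{\bar x,\bar p}[\chi](y_\eps)+2\delta\,\tau\tau^T Dw(y_\eps)\cdot D\chi(y_\eps)+o_\eps(1),
\]
where $o_\eps(1)$ absorbs the $\eps$-prefactored terms of $H^\eps$ (which vanish automatically in the supercritical case $\alpha>2$) and the coefficient errors $\sigma(x_\eps,y_\eps)-\sigma(\bar x,y_\eps)$, both genuinely small on any fixed compact set of $y$.

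\textbf{Localization via the Liapounov and conclusion.} Assumption (U) together with (E) gives the quantitative lower bound $\mathcal G_{\bar x,\bar p}[\chi](y)\ge 2aB|y|^2-C'(1+|y|)$, while $|D\chi(y)|=2a|y|$; combined with the global bound $|Dw|\le C_0$ of Proposition \ref{globunifw}, the previous display becomes
\[
\phi_t(t_\eps,x_\eps)-\bar H(\bar x,\bar p)\le -2aB\delta|y_\eps|^2+C''\delta(1+|y_\eps|)+o_\eps(1).
\]
Since the left-hand side is bounded uniformly in $\eps$ and in $\delta\in(0,1)$, the quadratic term on the right dominates once $|y_\eps|$ is large, which forces $|y_\eps|\le M_\delta$ uniformly in $\eps$. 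Optimizing the right-hand side in $|y|$ shows that its maximum is $O(\delta)$, attained at $|y|=O(1)$ independently of $\delta$. Using the compactness of $\{y_\eps\}$ to make the $o_\eps(1)$ genuinely vanish, and the strict-maximum property to obtain $(t_\eps,x_\eps)\to(\bar t,\bar x)$, one passes to the limit $\eps\to 0$ to obtain $\phi_t(\bar t,\bar x)-\bar H(\bar x,\bar p)\le C\delta$; letting $\delta\to 0$ concludes (a). Part (b) is proved by the symmetric construction $\phi^\eps=\phi+\eps^{\alpha-1}(w-\delta\chi)$ at a strict minimum of $\underline v-\phi$, in which the Liapounov term appears with the opposite sign and plays exactly the same localizing role.

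\textbf{Main obstacle.} The delicate step is the localization of $y_\eps$. A naive perturbed test function without the $\delta\chi$ addition gives no a-priori control on $|y_\eps|$, and in the critical case $\alpha=2$ the quadratic nonlinearity $|\tau^T D_y v^\eps|^2$ produces the cross term $2\delta\,\tau\tau^T Dw\cdot D\chi$, which is of order $\delta|y|$ and would compete with the quadratic Liapounov contribution $2aB\delta|y|^2$ if $Dw$ were not globally bounded. This is exactly where Proposition \ref{globunifw} becomes indispensable in the convergence proof, and it is also the reason why assumption (U), rather than only (E), is needed: the Ornstein--Uhlenbeck tail of $b$ is what simultaneously supplies the quadratic Liapounov rate and the global Lipschitz estimate on the corrector that makes the localization argument close.
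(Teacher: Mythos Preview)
Your proposal is correct and follows essentially the same route as the paper: the perturbed test function $\psi+\eps^{\alpha-1}(w+\eta\chi)$ with the Liapounov correction, localization of $y_\eps$ via the quadratic growth of $\mathcal G[\chi]$ combined with the global Lipschitz bound on the corrector from Proposition \ref{globunifw}, and the two-step limit $\eps\to 0$ followed by $\eta\to 0$ (your $\delta$). Two minor slips that do not affect the argument: the $|\tau^T D\chi|^2$ term carries a factor $\delta^2$ rather than $\delta$ in the expansion (your inequality is still valid since this only enlarges the right-hand side), and the effective Liapounov coefficient is $2aB-4a^2T>0$ rather than $2aB$; also, calling the coefficient-error terms $o_\eps(1)$ before $y_\eps$ is localized is premature, but you correctly reinstate this after compactness is obtained, exactly as the paper does in its Lemma \ref{GF}.
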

\begin{proof} 
Note that, once $a)$ and $b)$ proved, by the definition of semilimits  
and by the comparison principle (Theorem \ref{thm:compbarh211}) for the effective equation \eqref{eqn:effprob11}, we get
$$
\bar{v}=\underline{v}=v \quad \mbox{in} \,\,[0,T)  \times \mathbb{R}^n
$$
and then, thanks to the properties of semilimits, we get that $v^\eps$ converges locally uniformly to the 
unique bounded solution of \eqref{eqn:effprob11}. 
Therefore, the main claims which we have to prove are $a)$ and $b)$.
We prove only $a)$ since the proof of $b)$ is analogous.
Moreover, since the proofs for the critical and supercritical case are similar with some minor (and standard) adaptations, we treat only the case $\alpha=2$.

We take  a smooth function $\psi$, and  without loss of generality we  assume  that $\psi$ is coercive in the variable $x$ and  for all compact $K \subset [0,T] \times \mathbb{R}^n$ there exists a constant $C_K>0$ such that 
\begin{equation}\label{dertunb}
|\partial_t \psi(t,x)| \leq C_K \quad \forall (t,x) \in K.
\end{equation}
Let $(\bar{t},\bar{x})$ be a point of strict maximum  of $\bar{v}(t,x) -\psi(t,x)$. 
Let $\eta >0$ and consider the function
 \begin{equation}\label{Phiconv0}
\Phi(t,x,y)=v^\eps(t,x,y)-\psi(t,x)-\eps(w(y)+\eta \chi(y)),
\end{equation}
where  $w$ is the corrector, solution to the ergodic problem \eqref{eqn:cellacriticounb} for $\lambda=\bar{H}(\bar{x},D_x\psi(\bar{t},\bar{x}))$ and $\chi$ is the Liapounov function, that is 
\begin{equation}
\label{liapfinale}
\chi=a |y|^2, \quad a< \frac{1}{2T},
\end{equation}
for some $T>0$ depending on $||\tau||_\infty$ which we defined in \eqref{T}.

By \eqref{loggrowth} and the definition \eqref{liap} of $\chi$, we have for $\eta$ fixed
$$
w(y)+\eta \chi(y) \rightarrow +\infty \mbox{ as } |y| \to + \infty.
$$
Then, there exists $(t_{\eps,\eta},x_{\eps,\eta},y_{\eps,\eta})\in [0,T]\times \mathbb{R}^n\times  \mathbb{R}^m$ point of maximum of $\Phi$ defined in \eqref{Phiconv0}. We denote
$$
(t_{\eps,\eta}, x_{\eps,\eta}, y_{\eps,\eta})=:(t,x,y).
$$
Since $v_\eps$ is a solution of equation \eqref{eqn:equazioneeps112}, we test it as a subsolution with the function $\psi +\eps(w+\eta\chi)$  and by writing
$$
|\tau(y)^T(Dw(y)+\eta D\chi(y))|^2=|\tau(y)^TDw(y)|^2+\eta^2|D\chi(y)|^2+2\eta(\tau(y)^TDw(y),D\chi(y)),
$$
we get
\begin{multline}\label{equaconv}
\psi_{t}(t,x)- \eps \tr(\sigma\sigma(x,y)^T D^2_{xx}\psi(t,x)) - \eps\phi(x,y) \cdot D_x\psi(t,x)- |\sigma(x,y)^T D_x\psi(t,x)|^2 \\-b(y)\cdot Dw(y)-\tr(\tau(y)\tau(y)^TD^2w(y))-2\tau(y)^T\sigma(x,y)^TD_x\psi(t,x)\cdot Dw(y) -|\tau(y)^TDw(y)|^2\\+\eta G_{\eps,\eta}(x,y)\leq 0,
\end{multline}
where, for convenience of notations, we denote
\begin{multline}\label{G}
G_{\eps,\eta}(x,y)=- b(y)\cdot D\chi(y)-\tr(\tau(y)\tau(y)^TD^2\chi(y))-\eta|\tau(y)^TD\chi(y)|^2\\-2\tau(y)^TD w(y) \cdot D\chi(y) -2\tau(y)\sigma(x,y)^TD_x\psi(t,x) \cdot D\chi(y).
\end{multline}
We recall that the corrector $w$ is solution of the ergodic problem \eqref{eqn:cellacriticounb} for $\lambda=\bar{H}(\bar{x},D_x\psi(\bar{t},\bar{x}))$ (see Proposition \ref{thm:trucell}), that is, $w$ satisfies
\begin{multline}\label{wcorr}
\bar{H}(\bar{x},D\psi(\bar{t},\bar{x}))= b(y)\cdot Dw(y)+\tr(\tau(y) \tau(y)^T D^2 w(y))+|\tau(y)^T D w(y)|^2\\+ 2 (\tau(y) \sigma(\bar{x},y)^T D_x \psi(\bar{t},\bar{x}) ) \cdot Dw(y)
 + |\sigma(\bar{x},y)^T D_x\psi(\bar{t},\bar{x})|^2.
\end{multline}
We use \eqref{wcorr} in  \eqref{equaconv} and we get
\begin{multline}\label{eqabs11}
\psi_{t}(t,x)- \eps \tr(\sigma\sigma(x,y)^T D^2_{xx}\psi(t,x)) - \eps\phi(x,y) \cdot D_x\psi(t,x) +\eta G_{\eps,\eta}(x,y)+F_{\eps}(x,y)\\-\bar{H}(\bar{x},D\psi(\bar{t},\bar{x}))\leq 0,
\end{multline}
where we denote
\begin{multline}\label{feps}
F_{\eps}(x,y)=(-2 \tau(y) \sigma(x,y)^T D_x \psi(t,x) + 2\tau(y) \sigma(\bar{x},y)^T D_x\psi(\bar{t},\bar{x}))  \cdot D w(y)
 \\- |\sigma(x,y)^T D_x\psi(t,x)|^2 
 + |\sigma(\bar{x},y)^T D_x\psi(\bar{t},\bar{x})|^2.
\end{multline}

In the following lemma we prove that $(x,t,y)$ are uniformly bounded in $\eps$ and that $x,t \to \bar{x},\bar{t}$ as $\eps \to 0$. 
Note that we split the proof of the  equiboundedness of $(t,x,y)$ into (i) and (ii) in the following lemma only for convenience of exposition.

\begin{lem}\label{boundpoints}
 Let $\eta >0$ be fixed. Under the above notations and under the assumptions of Theorem \ref{conv1unb}, we have
 \begin{enumerate} 
\item[(i)]
$(x,t) \mbox{ are uniformly bounded in }\eps;$
\item[(ii)]
$y$ is uniformly bounded in $\eps$;
\item[(iii)]
$ (x,t) \to (\bar{x},\bar{t}) \mbox{ as } \eps \to 0.$
\end{enumerate}
\end{lem}

We split the proof into three steps; in Step 1 we prove (i), in Step 2 we prove (ii) and in Step 3 we prove (iii).
 \begin{proof}[Proof of Lemma \ref{boundpoints}]\rm{
 \begin{step}1 (Proof of (i))
 \upshape
 For all $x'\in \mathbb{R}^n, y' \in \mathbb{R}^m$ and $t'\in(0,T)$ we have
$$
v^\eps(t,x,y) -\psi(t,x)-\eps(w(y) +\eta \chi(y))\geq v^\eps(t',x',y') -\psi(t',x')-\eps(w(y') +\eta \chi(y')),
$$
that is
$$
\psi(t,x)+\eps(w(y) +\eta \chi(y))\leq 2 \sup_{\eps}||v^\eps||_{\infty} +\sup_\eps \left[\psi(t',x') +\eps(w(y')+\eta \chi(y'))\right]
$$
then
\begin{equation}\label{xbd}
\sup_\eps \left[\psi(t,x) +\eps(w(y)+\eta \chi(y)))\right] < \infty.
\end{equation}
Note that \eqref{xbd} implies
\begin{equation}\label{psicoer}
\sup_\eps \psi(t,x)  < \infty.
\end{equation}
Indeed, \eqref{psicoer} follows immediately from \eqref{xbd}  if $|y|$ is bounded in $\eps$; when  $|y|\to + \infty$ it follows since $\eps(w(y)+\eta \chi(y)))$ is positive  thanks to the definition of $\chi$ and the logarithmic growth of $w$ proved in \eqref{loggrowth}.  Then the uniform boundedness of $x$ and $t$ follows  from \eqref{psicoer} and the coercivity of $\psi$.
\end{step}
\begin{step}2 (Proof of (ii))
\upshape
We proceed  by contradiction, supposing $
|y|\to + \infty$ as $\eps \to 0$
and we get a contradiction with the equation \eqref{eqabs11} by applying  Lemma \ref{GF}, whose proof is postponed   at the end of the proof of $a)$. We just observe that it essentially relies on  $(i)$ of Lemma \ref{boundpoints} proved in step $1$, on the quadratic growth of the Liapounov function $\chi$ and on the uniform estimate of the gradient of the corrector $w$ (Proposition \ref{globunifw}).

\begin{lem}\label{GF}
Let assumptions of Theorem \ref{conv1unb} hold. Let $G_{\eps,\eta}(x,y)$ and $F_\eps(x,y)$ be defined respectively in \eqref{G}  and \eqref{feps} and let  $\eta>0$ be fixed.  Then, if 
\begin{equation}\label{ipocontr}
|y|\to + \infty \mbox{ as } \eps \to 0,
\end{equation}
then we have
\begin{enumerate}
\item[(1)]
$
\lim_{\eps\to 0} G_{\eps,\eta}(x,y)=+\infty.
$
\item[(2)]
$
|\lim_{\eps \to 0} F_{\eps}(x,y) |\leq C',
$
for some constant $C'>0$.
\end{enumerate}
\end{lem}

Then the uniform boundedness of $y$ follows  by coupling $(1)$ and $(2)$ of Lemma \ref{GF}  with equation \eqref{eqabs11} and observing that   $\phi$ and $\sigma$ are bounded, $t, x$ are uniformly bounded in $\eps$ and the time derivative of $\psi$ is bounded by \eqref{dertunb}. 
\end{step}
\begin{step}3 (Proof of (iii))
\upshape
 Note that, by Step $1$ and Step $2$, we can suppose that there exists $(\tilde{t}, \tilde{x}, \tilde{y})$  such that,  up to subsequences
\begin{equation}\label{ybar1}
(t,x,y) \to (\tilde{t}, \tilde{x}, \tilde{y}) \quad \mbox{ as } \eps \to 0.
\end{equation}
Since, for all $ t',x',y'$,
$$
v^\eps(t, x, y)-\psi(t, x) -\eps (w(y) +\eta \chi(y))\geq v^\eps(t',x',y') -\psi(t',x')-\eps (w(y')-\eta \chi(y')),
$$
 using the uniform boundedness of $y$ and the definition of upper semi-limit we get
$$
\bar{v}(\tilde{t}, \tilde{x}) -\psi(\tilde{t}, \tilde{x}) \geq \bar{v}(t',x')-\psi(t',x') \quad \forall t',x'.
$$
Then
$$
\tilde{x}=\bar{x}, \quad \tilde{t}=\bar{t}
$$
and
\begin{equation}\label{bartx1}
t \to \bar{t}, \quad x \to\bar{x} \quad \mbox{ as } \eps \to 0,
\end{equation}
concluding the proof of the lemma.
\end{step}}
\end{proof}

Now we conclude the proof of Theorem \ref{conv1unb} $a)$. 

Note that from now on when we do the limit as $\eps \to 0$, we mean the limit along the subsequences such that  \eqref{ybar1} (and then also \eqref{bartx1}) hold.

Note that, by $(iii)$ of Lemma \ref{boundpoints} 
 and by definition of the corrector $w$, we have 
\begin{equation}\label{limfeps}
\lim_{\eps \to 0}F_\eps(x,y)=0,
\end{equation}
where $F_\eps$ is defined in \eqref{feps}.
Then, we let $\eps \to 0$  in \eqref{eqabs11} and use again \eqref{bartx1}, \eqref{ybar1} and \eqref{limfeps} to get
\begin{equation}\label{penult1}
\psi_{t}(\bar{t},\bar{x}) +\eta G_{\eta}(\bar{x},\tilde{y})-\bar{H}(\bar{x},D\psi(\bar{t},\bar{x}))\leq 0.
\end{equation}
where
$$
G_\eta(\bar{x},\tilde{y}):=\lim_{\eps\to0}G_{\eps,\eta}(x,y),
$$
where and $G_{\eps,\eta}$ is defined in \eqref{G}.

Note that
\begin{multline*}
G_\eta(\bar{x},\tilde{y})=- b(\tilde{y})\cdot D\chi(\tilde{y})-\tr(\tau(\tilde{y})\tau(\tilde{y})^TD^2\chi(\tilde{y}))-\eta|\tau(\tilde{y})^TD\chi(\tilde{y})|^2\\-2\tau(\tilde{y})^TD w(\tilde{y}) \cdot D\chi(\tilde{y}) -2\tau(\tilde{y})\sigma(\bar{x},\tilde{y})^TD_x\psi(\bar{t},\bar{x}) \cdot D\chi(\tilde{y}).
\end{multline*}
We observe that if
$
\tilde{y} \mbox{ is uniformly bounded in } \eta,
$
we send $\eta \to 0$  and we conclude
\begin{equation}\label{ult1}
\psi_t -\bar{H}(\bar{x},D\psi(\bar{t},\bar{x})) \leq 0.
\end{equation}
Otherwise, if
$$
|\tilde{y}| \to + \infty \mbox{ as } \eta \to 0,
$$
we prove analogously as  in Lemma \ref{GF} $(1)$ that for any $\eta$ small enough
$$
\lim_{\eta \to 0}G_\eta(\bar{x},\tilde{y})=+\infty.
$$
Then we can suppose for $\eta$ small
\begin{equation}\label{pos1}
\eta G_{\eta}(\bar{x},\tilde{y})\geq 0
\end{equation}
and by coupling \eqref{pos1} with \eqref{penult1}, we conclude again \eqref{ult1}.
\end{proof}
Finally we prove Lemma \ref{GF}.
\begin{proof}[Proof of Lemma \ref{GF}]\rm{
First we prove $(1)$. Take $\eta, \eps <1$ and consider $|y|\geq R_1$, where $R_1$ is defined in $(U)$. We analyse $G_{\eps,\eta}$ term by term: 
$$
-b(y)\cdot D_y\chi(y)-|\tau(y)^TD_y\chi(y)|^2\geq 2a|y|^2-2a|b||y|-4 a^2 T |y|^2,
$$
by \eqref{liapfinale} and assumption  $(U)$;
$$
-2\tau(y)\sigma(x,y)^TD_x\psi(t,x)\cdot D_y\chi(y)\geq -2aK|D_x\psi(t,x)||y| -2aK,
$$
where from now on we denote by $K>0$ a constant depending only on $||\tau||_\infty, ||\sigma||_\infty$ which may change from line to line.
Note that $|D_x\psi(t,x)|$ is bounded uniformly in $\eps$ by  Lemma \ref{boundpoints} (i) and the smoothness of $\psi$.
We control the growth of the gradient of $w$ by the global estimate \eqref{globlipeq3} proved in Proposition \ref{globunifw} and we get
$$
-2\tau(y)^TD_y\chi(y) \cdot \tau(y)^TD_yw(y)\geq -4aCK|y|,
$$
where $C$ is defined in \eqref{globlipeq3}.
Then, by coupling all the previous estimates, we get
$$
G_{\eps,\eta}(x,y)\geq   (2a -4 a^2 T )|y|^2-2a|b||y|-4aCK|y|-2aK|D_x\psi(t,x)||y| -2aK.
$$
and by the second of \eqref{liapfinale}, we finally get $(1)$.

In order to prove $(2)$, we use  again \eqref{globlipeq3} of Proposition \ref{globunifw}  to get
$$
 \tau(y) \sigma(\bar{x},y)^T D_x\psi(t,x))  \cdot D_y w(y)\geq -KC|D_x\psi(t,x))|,
$$
where  $C>0$ is defined in \eqref{globlipeq3}. Then we conclude since $(t,x)$ are bounded in $\eps$ by  Lemma \ref{boundpoints} (i) and $\tau, \sigma$ are bounded.}
\end{proof}

\section*{Acknowledgment}
\addcontentsline{toc}{section}{Acknowledgment}
Part of this work was developed while the author was a Ph.D student at the Dept. of Mathematics of the Univ. of Padua. The author wants to express her deep gratitude to M. Bardi and A. Cesaroni for suggesting the problem and for the important help given.  The author wishes also to warmly thank Guy Barles and Olivier Ley for the interesting suggestions which strongly contributed to the improvement of some proofs.


\end{document}